\documentclass[11pt]{amsart}
\usepackage[margin=1in]{geometry}
\usepackage[T1]{fontenc}
\usepackage{amsmath,amssymb,amsthm,mathtools,mathrsfs}
\usepackage{microtype,graphicx,nicefrac}
\usepackage[dvipsnames]{xcolor}
\usepackage[square,authoryear]{natbib}
\usepackage[citecolor=PineGreen,colorlinks=true,linkcolor=RoyalBlue,urlcolor=BrickRed]{hyperref}
\hypersetup{pdftitle={Sandpile avalanches in two dimensions are finite},pdfauthor={Ahmed Bou-Rabee}}
\newtheorem{theorem}{Theorem}[section]
\newtheorem{proposition}[theorem]{Proposition}
\newtheorem{lemma}[theorem]{Lemma}
\numberwithin{equation}{section}
\newcommand{\Z}{\mathbb Z}
\newcommand{\PP}{\mathbb P}
\newcommand{\EE}{\mathbb E}
\newcommand{\nf}{\nicefrac}
\newcommand{\cL}{\mathscr{L}}
\DeclareMathOperator{\diam}{diam}
\title[Sandpile avalanches in two dimensions]{Sandpile avalanches in two dimensions are finite}
\author{Ahmed Bou-Rabee}
\begin{document}
\raggedbottom
\begin{abstract}
In the abelian sandpile model on~$\Z^d$, a site with at least~$2d$ grains topples by sending one grain to each neighbor. In a box~$[-N,N]^d$ whose boundary absorbs grains, adding grains at random sites and toppling defines a Markov chain. \citet{AthreyaJarai2004} constructed the stationary state on~$\Z^d$ as the limit of its stationary distributions as~$N\to\infty$. They asked whether, when a grain is added at the origin of a sandpile in the stationary state, the resulting avalanche of topplings is finite. \citet{JaraiRedig2008} proved that the answer is yes for~$d\geq3$, using the transience of random walk. We prove, using a different method, that avalanches are also finite for~$d=2$.
\end{abstract}
\maketitle
\begin{figure}[!h]
\centering
\includegraphics[width=0.71\textwidth]{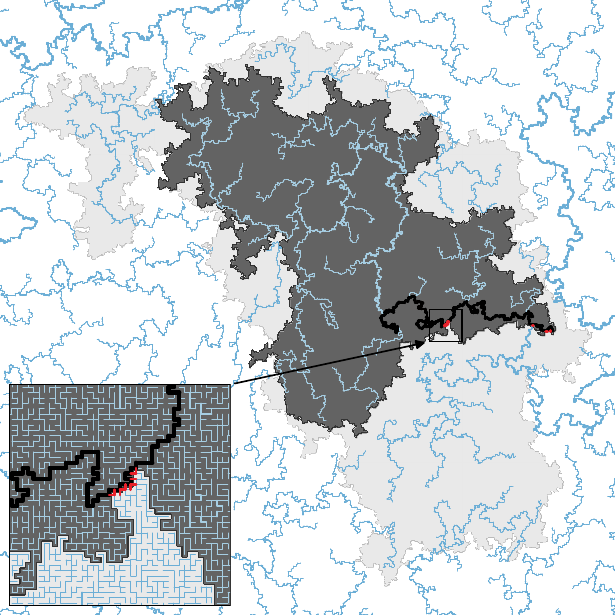}
\caption{An avalanche, in light gray, has one wave for each toppling of the origin, and one of these waves is shown in dark gray. The sandpile left by the wave is described exactly by two spanning trees, in blue: one of the wave and one of the rest of the box. Joining these two trees along a red edge into one spanning tree of the box lets us control large avalanches by a single spanning tree quantity, as explained in Section~\ref{ssec:proof-overview}.}
\label{fig:avalanche}
\end{figure}
\clearpage

\section{Introduction}\label{sec:introduction}

Consider a graph with a distinguished vertex called the \textbf{sink}. A \textbf{sandpile}~$\eta$ on the graph is a function from the set of non-sink vertices to the integers, where~$\eta(v)$ represents the number of grains, or \textbf{height}, at vertex~$v$. A sandpile~$\eta$ is \textbf{stable} if~$\eta(v)$ is less than the degree of~$v$ for every non-sink vertex~$v$. We call a vertex~$v$ \textbf{unstable} in~$\eta$ if~$v$ is not the sink and~$\eta(v)$ is at least the degree of~$v$. An unstable vertex~$v$ can \textbf{topple}, moving one grain along each edge from~$v$. Grains sent to the sink disappear. The topplings that follow the addition of a grain to a stable sandpile form an \textbf{avalanche} (Figure~\ref{fig:avalanche}). A box~$[-N,N]^d$ of~$\Z^d$ with \textbf{wired boundary conditions} is the graph obtained from~$\Z^d$ by identifying all vertices outside the box to a single vertex, the sink, and deleting the loops at the sink. On this graph every sandpile becomes stable after finitely many topplings, and adding grains at uniformly chosen vertices and toppling until the sandpile is stable defines a Markov chain on stable sandpiles.

\citet{Dhar1990} showed that, when a grain is added at a vertex of a sandpile in the stationary distribution of this chain, the expected number of topplings at each vertex is proportional to the Green function of simple random walk in the box between the two vertices. As the box grows, this Green function stays bounded when~$d\geq3$ and grows logarithmically when~$d=2$~\citep{LawlerLimic2010}. \citet*{AthreyaJarai2004,AthreyaJarai2006} constructed the stationary state on~$\Z^d$ as the limit of these stationary distributions and asked whether avalanches are almost surely finite, expecting the transience or recurrence of random walk to play a role. \citet*{JaraiRedig2008} proved that the answer is yes when~$d\geq3$. When~$d=2$, \citet*{BhupatirajuHansonJarai2017} showed that the expected radius of an avalanche is infinite, but whether avalanches on~$\Z^2$ are almost surely finite has remained open~\citep*{JaraiRedig2008,JaraiWerning2014,Jarai2018}.

\subsection{Main result}\label{ssec:main-results}

In a box of~$\Z^2$ with wired boundary conditions, a stable sandpile~$\eta$ is \textbf{recurrent} if~$\eta$ can be obtained from every sandpile by adding grains and toppling unstable vertices. The stationary distribution of the Markov chain above is uniform on recurrent sandpiles~\citep{Dhar1990}. Let~$\mu$ be the \textbf{stationary state} on~$\Z^2$, the weak limit of these uniform measures as the box grows~\citep*{AthreyaJarai2004}. The measure~$\mu$ is translation invariant.

For a sandpile~$\eta$, let~$\eta+\delta_0$ be the sandpile obtained by adding one grain at the origin. A toppling at~$v$ is \textbf{legal} if~$v$ is unstable in the current sandpile when the toppling occurs. If a finite sequence of legal topplings takes~$\eta+\delta_0$ to a stable sandpile, then~$\eta+\delta_0$ \textbf{stabilizes}. By the abelian property~\citep{Dhar1990}, the number~$u(x)$ of topplings at~$x$ does not depend on the sequence. We call~$u$ the \textbf{odometer function} of the stabilization. The \textbf{radius} of the avalanche is~$\rho\coloneqq\max\{|x|_\infty:u(x)>0\}$, with~$\rho=0$ when no vertex topples and~$\rho=\infty$ when~$\eta+\delta_0$ does not stabilize.

\begin{theorem}\label{th:main}
For $\mu$-almost every sandpile~$\eta$, the sandpile~$\eta+\delta_0$ stabilizes. Moreover, there are universal constants~$a>0$ and~$C<\infty$ such that every~$M\geq0$ satisfies
\begin{equation*}
\mu(\rho>M)\leq C(\log(M+2))^{-a}\,.
\end{equation*}
\end{theorem}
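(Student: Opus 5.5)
We reduce to a uniform estimate in finite boxes and then pass to the limit. Let $\mu_N$ be the uniform measure on recurrent sandpiles in $[-N,N]^2$ with wired boundary. The event $\{\rho>M\}$ in $\Z^2$ is not local, but the event that the avalanche in the box $[-n,n]^2$ (with sink outside) reaches $\partial[-M,M]^2$ is local for fixed $n$. By a monotonicity/abelian argument, $\rho>M$ in $\Z^2$ implies that for every $n>M$ the avalanche restricted to $[-n,n]^2$ with absorbing boundary reaches distance $M$. It therefore suffices to prove
\[
\mu_N(\text{avalanche from } 0 \text{ reaches } \partial[-M,M]^2)\le C(\log (M+2))^{-a}
\]
uniformly in $N\ge M$. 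Weak convergence then transfers the bound to $\mu$, and letting $M\to\infty$ gives almost sure stabilization. The latter also needs that finitely many topplings in every box with radius bounded uniformly in $n$ yields a finite stabilization in $\Z^2$, which is a standard compactness step once the radius is tight.

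To bound the box probability, decompose the avalanche into waves, one for each toppling of the origin, as in the figure. If the avalanche reaches distance $M$, then some wave does, and the last such wave is the natural one to track. Through Dhar's burning bijection (the Majumdar--Dhar correspondence), the sandpile left by a wave corresponds to a pair of spanning trees: one of the wave region rooted appropriately, and one of the complement. Joining them along an edge gives a spanning tree of the box, so that a uniform recurrent sandpile together with a wave maps, with controlled multiplicity, to a uniform spanning tree with a marked edge near the origin. The event that a wave reaches radius $M$ then becomes a UST event: the component of the tree containing the origin after deleting the marked edge, essentially the past of the origin in the UST oriented toward the sink, has diameter at least $M$. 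Summing over waves is the delicate part. Instead of summing, I would bound the probability that the last wave is large. The expected number of waves is of order $\log N$, which is too large for a naive union bound, so the correspondence must be made injective on the last large wave.

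The main obstacle is obtaining a decaying bound for the UST quantity in two dimensions. Simply using the one-arm bound for the past of the origin does not work, because the past has heavy tails and, combined with logarithmically many waves, the estimates barely fail to close. I would first aim for a multiscale argument. Across the $\asymp\log M$ dyadic scales between $1$ and $M$, each scale has a uniformly positive probability, by Wilson's algorithm and RSW-type estimates for loop-erased random walk, of containing a ``blocking'' configuration. Such a configuration separates the origin's wave from scale $2^{k+1}$ and forces the sandpile to have enough low heights (a closed barrier of non-critical sites) to stop subsequent waves. These events are only weakly dependent, so the probability of no blocking at any of $k$ scales decays like $e^{-ck}$ in a suitable averaged sense. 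That would give $M^{-c}$, but the dependence between scales and the need to convert spanning tree events back into statements about all waves likely loses a lot. I therefore expect to settle for a bound of the form $(\log M)^{-a}$, obtained by showing that the probability the avalanche crosses a dyadic annulus, conditional on reaching it, is at most $1-c/\log(\text{scale})$, or that about $\log\log M$ independent chances each succeed with constant probability. This conditional annulus estimate, via the single joined spanning tree, is the step I expect to be hardest.
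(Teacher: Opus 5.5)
Your reduction to finite boxes is fine. In fact $\{\rho\le M\}$ already forces stabilization, so no compactness step is needed. You also diagnose the obstacle correctly: the expected number of waves is of order $\log N$, so a single-wave bound summed over waves cannot close.

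\textbf{Missing reduction.} The proposal never supplies the idea that removes this obstacle. ``Making the correspondence injective on the last wave that reaches distance $M$'' is not a mechanism. Whether a wave is the last large one depends on all later waves, while the two-tree encoding of the sandpile left by a wave gives tractable control only of the \emph{next} wave.

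The paper exploits exactly this one-step information. It marks $S$, consisting of $0$, its four neighbors and $(r,0)$, and uses that the final wave misses a neighbor of $0$. An avalanche with $\rho_N>M$ then contains either a wave that misses a vertex of $S$ and leaves $[-M,M]^2$, or a wave $W(\eta)\supset S$ immediately followed by a wave $W(U\eta)\not\supset S$.
\begin{itemize}
\item Waves of the first kind are handled by adding one edge to their two-component forest. This bounds them by $\sum_x\PP(\operatorname{Past}_{T_N}(x)\not\subset[-M,M]^2)\le Cr(M-r)^{-1/3}$, summed over the $r+4$ vertices of $S$ and the segment, using Barlow--Masson.
\item For the second kind, burn $\eta$ on $\Omega=W(\eta)$ while holding the vertices adjacent to $\Omega^c$. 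This shows the next wave topples everything outside the union of the pasts of the held vertices. So it can miss a vertex of $S$ only if $\cL_t>0$, where $\cL_t$ counts edges from $\operatorname{MinSt}(t,S)$ to $\Omega^c$.
\item Joining $t$ to a tree of $\Omega^c$ along each of these $\cL_t$ edges produces distinct spanning trees of the box. Hence such transitions number at most $\lvert\operatorname{SpanT}([-N,N]^2)\rvert\,\EE[\cL_N^{-1}]$.
\end{itemize}
This is a single-tree quantity in which the number of waves no longer appears. Nothing in your proposal plays its role.

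\textbf{Multiscale step.} This part is likewise only a hope, and the mechanism you suggest would not work.
\begin{itemize}
\item Low heights in $\eta$ do not by themselves stop waves. Within a wave, a vertex of height $h$ topples as soon as $4-h$ of its neighbors have toppled.
\item Every wave changes the configuration, and there are unboundedly many waves. So a barrier present in $\eta$ need not survive.
\item You give no argument that such barriers occur with uniformly positive probability at each scale, or that they are weakly dependent.
\item The conditional annulus-crossing bound $1-c/\log(\text{scale})$ has no proposed proof, and the avalanche has no Markov structure across annuli.
\end{itemize}

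In the paper the scales are run on the tree count, not on the sandpile. At scale $\ell$, Paley--Zygmund gives $\cL\ge\frac18\log\ell$ with probability at least $\varepsilon$, given the paths between the marked vertices. The mean is at least $\frac12\log\ell$ by an energy bound, and the variance is controlled through the Kenyon--Wilson grove identities and the Brooks--Smith--Stone--Tutte square tiling. This bound survives conditioning on earlier scales of Wilson's algorithm, via the Beurling estimate and a conductance bound for the revealed branches, provided $\log\ell_i\ge C\log\ell_{i-1}$. The resulting order $\log\log r$ scales give $\EE[\cL_N^{-1}]\le C(\log r)^{-a}$, and taking $r=\lfloor M^{1/6}\rfloor$ yields the theorem. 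Without both the reduction and this estimate, your outline does not produce any decay in $M$.
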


Theorem~\ref{th:main} shows that avalanches on~$\Z^2$ are almost surely finite and so answers the question of~\citet*{AthreyaJarai2004} for~$d=2$.

\subsection{Proof overview}\label{ssec:proof-overview}

Our proof begins with the decomposition of an avalanche in a box into \textbf{waves}, due to the physicists~\citet*{IvashkevichKtitarevPriezzhev1994}. While the origin is unstable, topple the origin once and then stabilize the other vertices, treating the origin as a sink; the vertices that topple form a wave. These topplings are legal, so when~$\eta+\delta_0$ stabilizes there are exactly~$u(0)$ waves, each finite, and each vertex topples at most once in each wave. Unlike an avalanche, a wave has an exact description. In a box, the sandpiles that waves leave behind in the stationary state correspond bijectively to two-component spanning forests, with the origin and the sink in different components, and the wave is the component of the origin.

Parts of this picture were made rigorous by~\citet*{JaraiRedig2008}, who used waves to prove that avalanches are finite when~$d\geq3$, and by~\citet*{BhupatirajuHansonJarai2017}, who showed that each two-component forest corresponds to exactly one wave of exactly one avalanche. An avalanche that reaches distance~$M$ contains a wave that reaches distance~$M$, so the probability of such an avalanche is at most the expected number of such waves. By this correspondence, the expected number of such waves equals the expected number of waves times the probability that the component of the origin in a uniformly chosen two-component forest of the box reaches distance~$M$. The expected number of waves is the expected number of topplings at the origin, which by the formula of Dhar recalled above grows like the logarithm of the size of the box. This logarithm is the main difficulty.

We overcome this difficulty by counting only the waves that a large avalanche must contain, and we bound their number by an expectation over a uniform spanning tree of the box instead of a sum over two-component forests. The proof proceeds in three steps.
\begin{enumerate}
\item Fix integers~$r\geq2$ and~$M\geq r+2$, and mark the origin, its four neighbors, and the vertex~$(r,0)$. We prove the following bound on the radius of an avalanche:
\begin{equation}\label{ov:reduction}
\mu(\rho>M)\leq\limsup_{N\to\infty}\EE\bigl[\cL_N^{-1}\bigr]+Cr(M-r)^{-\nf13}\,,
\end{equation}
where, in a uniformly chosen spanning tree of~$[-N,N]^2$ oriented toward the sink, $\cL_N$ is the number of lattice edges from the tree paths between the marked vertices to vertices whose own paths to the sink avoid these tree paths. These are the red edges in Figures~\ref{fig:avalanche} and~\ref{fig:joining}. The main new idea behind the proof of~\eqref{ov:reduction} is to control a large avalanche by the transitions from a wave that contains every marked vertex to a wave that misses one. Joining a spanning tree of a wave and a spanning tree of the rest of the box into a spanning tree of the whole box then turns the number of these transitions into an expectation over a single tree.
\begin{figure}[!t]
\centering
\includegraphics[width=\textwidth]{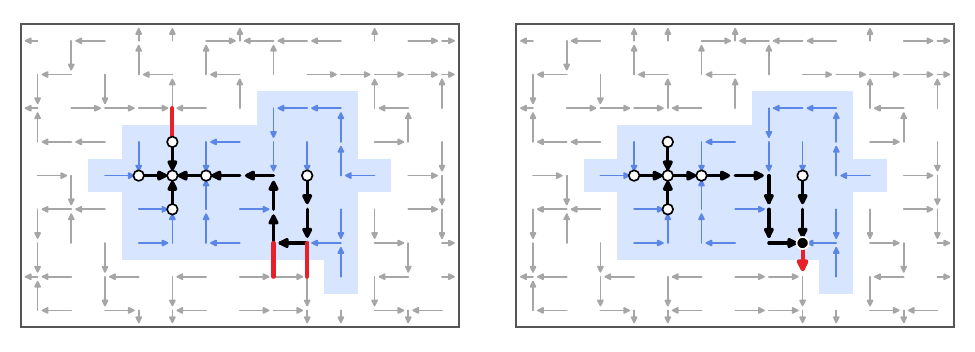}
\caption{Left: the sandpile left by a shaded wave gives a spanning tree of the wave, in blue, and a spanning tree of the rest of the box, in gray. The white discs are the marked vertices, and the~$\cL_N=3$ red edges go from the black paths between them to the rest of the box. Right: joining the two trees along one red edge gives a spanning tree of the box in which the wave is the set of vertices whose paths to the sink pass through the black disc, the first vertex that the paths from the marked vertices to the sink all share.}
\label{fig:joining}
\end{figure}
\item By~\eqref{ov:reduction}, we must show that~$\cL_N$ is usually large. Let~$\ell$ be a large integer, and suppose that the tree paths between the marked vertices reach distance~$2\ell$ from the first vertex that the paths from the marked vertices to the sink all share. We first prove that
\begin{equation}\label{ov:one-scale}
\PP\bigl(\cL_N\geq\tfrac18\log \ell\bigr)\geq\varepsilon
\end{equation}
for a universal constant~$\varepsilon>0$, conditionally on the tree paths between the marked vertices and on the path from this vertex to the sink.
\item Applying~\eqref{ov:one-scale} at many scales, each much larger than the previous one, we prove that
\begin{equation}\label{ov:multiscale}
\EE\bigl[\cL_N^{-1}\bigr]\leq C(\log r)^{-a}
\end{equation}
for a universal constant~$a>0$ and every~$N\geq r$. The scales are not independent, but~\eqref{ov:one-scale} still holds, with smaller constants, given the earlier scales. Taking~$r=\lfloor M^{\nf16}\rfloor$ in~\eqref{ov:reduction} and~\eqref{ov:multiscale} proves Theorem~\ref{th:main}.
\end{enumerate}
\medskip

\noindent\textbf{Step 1.} We prove the bound~\eqref{ov:reduction} in Proposition~\ref{sp:reciprocal-reduction}. The two terms on the right side of~\eqref{ov:reduction} correspond to two ways in which an avalanche can be large. The origin is stable after the last wave, so the last wave misses a neighbor of the origin. Hence an avalanche of radius larger than~$M$ has either a wave that contains every marked vertex immediately followed by a wave that misses a marked vertex, or a wave that misses a marked vertex and is not contained in~$[-M,M]^2$.

The first term in~\eqref{ov:reduction} bounds the expected number of these pairs of consecutive waves, and the second term bounds the expected number of waves that miss a marked vertex and are not contained in~$[-M,M]^2$. The second term is the easier one. Adding one edge to the two-component spanning forest of such a wave gives a spanning tree of the box in which the wave is the set of vertices whose paths to the sink pass through a single vertex that is a neighbor of the origin or lies on the segment from the origin to~$(r,0)$. A bound of~\citet*{BarlowMasson2011} on paths in the uniform spanning tree of~$\Z^2$ shows that, in a uniform spanning tree of the box, the set of vertices whose paths to the sink pass through any one such vertex leaves~$[-M,M]^2$ with probability at most~$C(M-r)^{-\nf13}$, and summing over these~$r+4$ vertices gives the second term.

We now explain the bound on the first term. Fix a wave that contains every marked vertex, the first wave of a pair. Adapting~\citet*{JaraiWerning2014}, we encode the sandpile left by the first wave by two spanning trees, one of the first wave oriented toward the origin and one of the rest of the box oriented toward the sink. The next wave starts at the origin and spreads outward along the first tree, and this spreading can stop only at vertices next to the outside of the first wave. So the next wave misses a marked vertex only if the tree path from that marked vertex to the origin passes next to the outside of the first wave. Figure~\ref{fig:joining} shows in red the lattice edges where this happens.

Joining the two trees along one red edge gives a spanning tree of the box in which the red edges are exactly the edges counted by~$\cL_N$. A pair of trees with~$\cL_N$ red edges gives~$\cL_N$ spanning trees of the box in this way, and distinct pairs give distinct trees. Hence the expected number of these pairs of consecutive waves is at most~$\EE[\cL_N^{-1}]$. Figure~\ref{fig:avalanche} shows the two trees and the red edges for one wave of a simulated avalanche, with~$r=200$: the wave contains every marked vertex, the black paths are the tree paths between the marked vertices, the~$\cL_N=14$ red edges go from these paths to the rest of the box, and the next wave misses the marked vertex~$(r,0)$.

\medskip
\noindent\textbf{Step 2.} We prove the lower bound~\eqref{ov:one-scale} in Lemma~\ref{rd:conditional-law} and Proposition~\ref{fo:local-comparison}: at the scale~$\ell$, the count~$\cL_N$ is at least~$\tfrac18\log \ell$ with probability at least~$\varepsilon$, whatever the conditioned paths are. To prove this bound, we use the Paley--Zygmund inequality, which requires a lower bound on the mean of~$\cL_N$ and an upper bound on the variance of~$\cL_N$.

To bound the mean of~$\cL_N$ from below, we condition on the tree paths between the marked vertices and on the path to the sink from the first vertex~$x$ that all the paths from the marked vertices to the sink share. The path from another vertex~$y$ to the sink then avoids these tree paths with the probability that simple random walk from~$y$ hits the vertices that follow~$x$ on the path from~$x$ to the sink before these tree paths~\citep{Wilson1996}. Summing this probability over the edges from these tree paths gives the conditional mean of~$\cL_N$, and an electrical network estimate shows that this sum is at least~$\tfrac12\log \ell$.

To bound the variance of~$\cL_N$ from above, we must show that the counted edges are not so strongly correlated that~$\cL_N$ is large only on a rare event. Passing to a finite domain around~$x$ replaces~$\cL_N$ by a count that~$\cL_N$ stochastically dominates. There, planar duality turns this count into the number of edges that one random path, in a uniform spanning tree of the dual graph, shares with a fixed path along the boundary of the domain. The identities of~\citet*{KenyonWilson2009} give the probability that this random path uses two given boundary edges, and they reduce the variance bound to bounds on the Green function of the dual graph at points of the boundary path. We bound these quantities, on average along the boundary path, by a constant times~$\log \ell$ for every domain, using the square tiling of~\citet*{BrooksSmithStoneTutte1940} in the form of~\citet{Lovasz2019}, in which every domain becomes a set of non-overlapping squares in a strip. Since the mean of this count is at least~$\tfrac12\log \ell$, its variance is at most a constant times the square of its mean.

\medskip
\noindent\textbf{Step 3.} We prove the inverse moment bound~\eqref{ov:multiscale} in Proposition~\ref{ct:reciprocal} by iterating~\eqref{ov:one-scale}. Since~$\cL_N^{-1}\leq1$, we have~$\EE[\cL_N^{-1}]\leq\PP(\cL_N<\theta)+1/\theta$ for every~$\theta>0$, so one scale gives only~$\EE[\cL_N^{-1}]\leq1-\varepsilon+o(1)$. We therefore use order~$\log\log r$ scales below~$r$, each a fixed large power of the previous one.

We build the tree scale by scale with Wilson's algorithm~\citep{Wilson1996}, which grows the tree from loop-erased random walks, and we apply~\eqref{ov:one-scale} at each scale given the earlier scales. If~\eqref{ov:one-scale} held at each scale~$\ell$, then, since the number of counted edges found up to~$\ell$ increases with~$\ell$, the conditional mean of the inverse of this number would be at most~$1-\varepsilon$ times the inverse at the previous scale plus~$8/\log\ell$, and iterating this bound would give~\eqref{ov:multiscale}.

The difficulty is that the branches built at earlier scales become additional roots, and~$\cL_N$ does not count the edges found at a later scale that start on these branches. By the discrete Beurling estimate of~\citet{Kesten1987}, these branches stay close to~$x$ except on an event of probability at most a constant times~$\ell^{-\nf12}$ at the scale~$\ell$, and an electrical network estimate shows that they take only a small fraction of the edges found at the next scale. Hence~\eqref{ov:one-scale} still holds at each scale with smaller constants outside this event, and~\eqref{ov:multiscale} follows.

The number of scales is limited by how far apart consecutive scales must be. At a scale~$\ell$, the bound~\eqref{ov:one-scale} provides only order~$\log\ell$ counted edges. The branches built at earlier scales may reach a distance that is a fixed power of the previous scale, and in mean they take up to a constant times the logarithm of this distance of the edges counted at~$\ell$. For the remaining edges to still be of order~$\log\ell$, the logarithm of each scale must be a large constant times the logarithm of the previous scale, so there are only order~$\log\log r$ scales below~$r$. Each scale contributes a factor~$1-\varepsilon$, and these factors multiply to a negative power of~$\log r$, which is~\eqref{ov:multiscale}. This is why Theorem~\ref{th:main} bounds~$\mu(\rho>M)$ by a negative power of~$\log M$ rather than of~$M$.

\subsection{Related work}\label{ssec:related-work}

\subsubsection{Physics literature}
\citet*{BakTangWiesenfeld1987} introduced the sandpile model as an example of self-organized criticality: in their simulations, avalanche sizes followed a power law, as at the critical point of a phase transition, but without the tuning of a parameter such as the temperature. \citet*{PriezzhevKtitarevIvashkevich1996} used the waves of Section~\ref{ssec:proof-overview} to predict the exponents of these power laws on~$\Z^2$. Assuming that successive waves are nested with decreasing sizes, they predicted that exactly~$n$ vertices topple in an avalanche with probability of order~$n^{-\nf54}$. If the number of toppled vertices is of order the square of the radius, this prediction says that the radius of an avalanche exceeds~$M$ with probability of order~$M^{-\nf12}$, which is much smaller than the bound of Theorem~\ref{th:main}.

However, waves need not be nested~\citep{DharManna1994}. Each wave repeats the topplings of the preceding wave up to the boundary of the preceding wave~\citep{PriezzhevKtitarevIvashkevich1996}, but in simulations \citet*{PaczuskiBoettcher1997} found that a wave is more often larger than the preceding wave, and~\citet*{KtitarevPriezzhev1998} observed that even a smaller wave need not lie inside its predecessor. Single waves remain well described by their two-component spanning forests, from which~\citet*{KtitarevLubeckGrassbergerPriezzhev2000} developed scaling predictions for waves. Complete avalanches are more complex: \citet*{DeMenechStella2000} found correlations between the sizes of successive waves, and~\citet*{TebaldiDeMenechStella1999} found numerical evidence that, unlike the number of toppled vertices, the number of topplings is not described by a single exponent. \citet*{JaraiSun2019} and~\citet{Sun2022} studied toppling probabilities and the number of waves numerically.

Numerical work also suggests that the frontiers of avalanches~\citep*{Saberi2009} and the boundaries of waves~\citep*{AzimiTafreshi2010,DashtiNaserabadi2012} are described by the Schramm--Loewner evolution with parameter~$2$, the scaling limit of loop-erased random walk~\citep*{LawlerSchrammWerner2004}.

\subsubsection{Infinite-volume sandpiles}
The burning bijection of~\citet*{MajumdarDhar1992} maps the recurrent sandpiles of a finite graph bijectively to the spanning trees of the graph, and the construction of~$\mu$ by~\citet*{AthreyaJarai2004} combines this bijection with the uniform spanning forest of~$\Z^d$, whose components are one-ended~\citep{Pemantle1991,BLPS2001}. This approach extends to other graphs. \citet{Jarai2012TransitiveGraphs} used a coupling to construct the stationary state on certain transitive graphs and on infinite graphs whose wired uniform spanning forest is connected and one-ended, and~\citet{JaraiWerning2014} removed the connectedness assumption when every component is one-ended. \citet*{GamlinJarai2014} constructed anchored burning bijections on such graphs, and on~$\Z^d$ they proved a power-law bound on the rate of convergence to the stationary state.

\subsubsection{Rigorous bounds on avalanches}
Away from two dimensions, more is known. For~$d\geq5$, \citet{Hutchcroft2020} determined the critical exponents of the radius and of the total number of topplings, and~\citet*{LiLiuShiraishi2026} recently announced these exponents for~$d=3$. On supercritical Galton--Watson trees conditioned to survive, \citet*{JaraiRuszelSaada2020} showed that an avalanche has more than~$n$ topplings with probability at most of order~$n^{-\nf12}$, and at least of this order when the offspring law has an exponential moment.

In two dimensions, the rigorous results come from the analysis of single waves. Besides the correspondence described in Section~\ref{ssec:proof-overview}, \citet*{BhupatirajuHansonJarai2017} showed that the last wave of an avalanche in a box is distributed as the component of the origin in a uniformly chosen two-component spanning forest, conditioned on a neighbor of the origin lying in the component of the sink, and that the~$k$th wave from the end, conditionally on its existence, converges in distribution to an almost surely finite set as the box grows. They also proved inequalities for the critical exponents of avalanches on~$\Z^d$ and bounded the radius of avalanches with a bounded number of waves.

\subsection{Notation and conventions}

\begin{itemize}
\item We write~$|\cdot|_\infty$ for the maximum norm on the plane and~$\diam_\infty$ for the diameter in this norm.
\item For an integer~$r\geq0$, we also write~$[-r,r]^2$ for the set of points~$y\in\Z^2$ with~$|y|_\infty\leq r$, and we let~$\partial[-r,r]^2\coloneqq\{y\in\Z^2:|y|_\infty=r\}$. For~$x\in\Z^2$, we write~$x+[-r,r]^2$ and~$x+\partial[-r,r]^2$ for the translates of these sets by~$x$.
\item For a vertex~$x$ and a set~$V$ of vertices, let~$\deg(x,V)$ be the number of edges from~$x$ to~$V$.
\item We write~$\PP_y$ for the law of simple random walk from a vertex~$y$ on~$\Z^2$ or on a finite network, and the position at time zero counts as a visit.
\item The constant~$C<\infty$ may change from line to line.
\end{itemize}

\subsection{Formalization}\label{ssec:formalization}

A Lean~4 formalization accompanying this paper is maintained at
\begin{center}
\url{https://github.com/nitromannitol/finite-avalanche}
\end{center}
The development uses Mathlib together with explicitly listed standard results from the literature. The repository records the exact statements and sources of these inputs and the correspondence between the paper and the Lean declarations.

\subsection*{Use of artificial intelligence}

The results of this paper were developed in collaboration with GPT-5.5 and models from the GPT-6 family from late May through early October 2026. The author directed the investigation and contributed ideas. The models contributed mathematical ideas, proof attempts, computations, counterexamples, and revisions. Claude Opus~5.5 made the figures and helped with the exposition.

We began by looking for exact maps between the waves of an avalanche, hoping to describe the complete avalanche through the two-component spanning forests of single waves. With the models, we enumerated all sandpiles in small boxes to test the proposed maps, including maps between the first and last waves. These computations ruled out every proposed identification of the complete avalanche with a single forest component: successive waves need not be nested, and the proposed maps did not preserve the uniform measure.

We then looked for a way to count complete avalanches instead of single waves. The author proposed the idea behind the main reduction: controlling large avalanches through certain pairs of their waves. The models proposed candidate bijections and tested them by exact enumeration, and this search led to a bound on the probability of a large avalanche by a quantity defined through the uniform spanning tree, in which the number of waves no longer appears.

We then had to show that this quantity tends to zero. The models proposed proof routes and found counterexamples to several of them: estimates of means were not enough, and the author saw that the proof needed a probability bound that survives revealing part of the tree. We first used such a bound to prove that avalanches are finite, and then applied the bound at many scales to obtain the logarithmic tail bound.

Throughout the investigation, separate agents pursued competing ideas and checked proposed proofs. We recorded each failed approach with the reason for its failure, so that later attempts did not repeat the approach. The author checked the proofs and wrote the paper, and takes responsibility for its mathematical content and presentation.

\subsection*{Acknowledgments}

The author thanks Tom Hutchcroft, Xinyi Li, Antal J\'arai, Christoforos Panagiotis, and Runsheng Liu for useful conversations.

\section{Preliminaries}\label{sec:prelim}

\subsection{Spanning trees}\label{ssec:prelim-trees}

For a finite multigraph~$G$, let~$\operatorname{SpanT}(G)$ be the set of spanning trees of~$G$. Given a set of roots, in~$G$ or in~$\Z^2$ with finite complement, a \textbf{rooted spanning forest} is an acyclic spanning subgraph in which every component contains exactly one root. Identifying the roots to one vertex and deleting the resulting loops turns rooted spanning forests into the spanning trees of a finite multigraph.

For an integer~$N\geq1$, we also write~$[-N,N]^2$ for the box with wired boundary conditions and~$s_N$ for its sink. Let~$T_N$ be a uniform spanning tree of~$[-N,N]^2$ oriented toward~$s_N$. A sequence of laws of random sets of edges, or of random sandpiles, \textbf{converges weakly} to a law if the probability of every event that depends on finitely many edges, or on finitely many heights, converges to the probability of the event under that law. As~$N\to\infty$, the law of~$T_N$ converges weakly to the law of the \textbf{uniform spanning tree}~$T$ of~$\Z^2$~\citep[Theorem~2.3 and the proof of Theorem~2.4]{Pemantle1991}, which is almost surely a spanning tree of~$\Z^2$~\citep[proof of Theorem~4.2]{Pemantle1991} with one end~\citep[Theorem~12.4]{BLPS2001}.

In a finite tree oriented toward a root, the \textbf{oriented path} from a vertex~$x$ follows the outgoing edges from~$x$, and the \textbf{past} of~$x$ is the set of vertices whose oriented paths pass through~$x$, written~$\operatorname{Past}_{T_N}(x)$ in~$T_N$. The oriented paths from a finite nonempty set eventually coincide, and their first common vertex is the \textbf{common ancestor} of the set. Since~$T$ has one end, removing a vertex~$x$ from~$T$ leaves exactly one infinite component. The outgoing edge of~$x$ in~$T$ goes from~$x$ to this component, and the past~$\operatorname{Past}_T(x)$ is the union of~$x$ and the other components, which are finite.

Fix a finite nonempty set~$S\subset\Z^2$. For a tree~$t$ in~$[-N,N]^2$ that contains~$S$, the oriented paths from~$S$ up to their common ancestor form the \textbf{minimal subtree}~$\operatorname{MinSt}(t,S)$ of~$t$ containing~$S$, and~$\operatorname{MinSt}(t,S)$ also denotes the vertex set of this subtree. If the common ancestor is not~$s_N$, let~$\cL_t$ be the number of lattice edges, in~$t$ or not, from~$\operatorname{MinSt}(t,S)$ to vertices outside the past of the common ancestor, counting an edge to~$s_N$ as a lattice edge leaving~$[-N,N]^2$; otherwise let~$\cL_t\coloneqq\infty$. Unless the common ancestor is the root of~$t$, the outgoing edge of the common ancestor is counted, so~$\cL_t\geq1$.

For~$M\geq0$ and~$N\geq M$, the event that the past of a vertex~$x$ lies in~$[-M,M]^2$ depends only on the edges with an endpoint in~$[-M,M]^2$, in~$T$ and in~$T_N$~\citep[supplementary material, proof of Claim~1]{AthreyaJarai2006}, so the weak convergence of~$T_N$ to~$T$ gives
\begin{equation}\label{pre:past-limit}
\PP\bigl(\operatorname{Past}_{T_N}(x)\subset [-M,M]^2\bigr)\to\PP\bigl(\operatorname{Past}_T(x)\subset [-M,M]^2\bigr)\quad\text{as }N\to\infty\,.
\end{equation}

\subsection{Sandpiles}\label{ssec:prelim-sandpiles}

Let~$G$ be a finite connected multigraph with a sink and no loops, and define recurrent sandpiles on~$G$ as in Section~\ref{ssec:main-results}. A nonempty set~$B$ of non-sink vertices is a \textbf{forbidden subconfiguration} of a sandpile~$\eta$ if~$\eta(x)<\deg(x,B)$ for every~$x\in B$.

Let~$\eta$ be a stable sandpile on~$G$, and fix a set of non-sink vertices. \textbf{Burning}~$\eta$ from the sink while \textbf{holding} these vertices unburned proceeds in steps. At step zero, the sink burns. At each later step, every vertex~$x$ that is not held and is still unburned burns if~$\eta(x)$ is at least the number of edges from~$x$ to the vertices unburned at the start of the step. Eventually no vertex burns, and the vertices still unburned then form the \textbf{final unburned set}. When no vertex is held, burning is the burning algorithm of~\citet[page~1615]{Dhar1990}. In general, burning is the first phase of the burning in~\citet[Section~3.1]{JaraiWerning2014}.

The following facts go back to~\citet{Dhar1990} and~\citet{MajumdarDhar1992}. By~\citet[Lemma~4.2]{HolroydLevineMeszarosPeresProppWilson2008}, every stable sandpile~$\eta$ on~$G$ satisfies
\begin{equation}\label{pre:burning}
\eta\text{ is recurrent}\quad\text{exactly when}\quad\eta\text{ has no forbidden subconfiguration}\,.
\end{equation}
By~\citet[Corollary~4.4]{Jarai2018}, the burning algorithm gives a bijection between the recurrent sandpiles on~$G$ and the spanning trees of~$G$, so
\begin{equation}\label{pre:count}
\bigl|\{\text{recurrent sandpiles on }G\}\bigr|=\bigl|\operatorname{SpanT}(G)\bigr|\,.
\end{equation}
By~\citet[Theorem~2.11 and Section~2.3]{Jarai2018},
\begin{equation}\label{pre:permutation}
\text{adding a grain at a non-sink vertex and stabilizing permutes the recurrent sandpiles on~$G$}\,.
\end{equation}

The next lemma restates an observation in~\citet[proof of Theorem~3]{JaraiWerning2014}, which follows from~\citet*[Lemmas~3 and~4]{GamlinJarai2014}.

\begin{lemma}\label{fv:held-burning}
Let~$G$ be a finite connected multigraph with a sink and no loops. Burn a uniform recurrent sandpile on~$G$ from the sink while holding a set of non-sink vertices unburned. Then the final unburned set has the law of the union of the pasts of the held vertices in a uniform spanning tree of~$G$ oriented toward the sink.
\end{lemma}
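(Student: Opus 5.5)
The plan is to prove the lemma by exact counting. For each possible final unburned set we count the recurrent sandpiles that produce it and the spanning trees whose union of pasts equals it, and show the two counts agree. Since both measures are uniform and, by \eqref{pre:count}, the two total counts agree, equal counts for every set give the lemma.

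Fix the held set $H$ and a set $A$ of non-sink vertices with $H\subset A$. Let $G_1$ be the graph obtained from $G$ by deleting $A$ (with its incident edges), keeping the sink. Let $G_2$ be the graph on $A$ obtained from $G$ by identifying $A^c$ (including the sink) into a single sink, deleting loops, and then deleting the sink edges of every vertex of $A\setminus H$.

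\textbf{Tree side.} A spanning tree $t$ of $G$ oriented toward the sink has $\bigcup_{h\in H}\operatorname{Past}_t(h)=A$ exactly when two conditions hold. First, the oriented paths from $A^c$ avoid $A$. Second, every vertex of $A$ reaches $A^c$ through a vertex of $H$, so that the only edges of $t$ from $A$ to $A^c$ leave from $H$. Cutting $t$ at these edges gives a bijection onto pairs consisting of a spanning tree of $G_1$ and a spanning tree of $G_2$. An edge of $G_2$ to its sink remembers its endpoint in $A^c$, so the reconstruction is unambiguous. Hence the number of such trees is $|\operatorname{SpanT}(G_1)|\cdot|\operatorname{SpanT}(G_2)|$.

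\textbf{Sandpile side.} I will show that a recurrent $\eta$ has final unburned set exactly $A$ if and only if three conditions hold.
\begin{enumerate}
\item[(i)] Burning from the sink while holding all of $A$ burns all of $A^c$. Equivalently, $\eta(x)-\deg(x,A)$ for $x\in A^c$ is a recurrent sandpile on $G_1$, since the burning rule on $A^c$ with $A$ frozen is exactly Dhar's burning on $G_1$ for the shifted heights.
\item[(ii)] $\eta(x)<\deg(x,A)$ for every $x\in A\setminus H$, so that nothing in $A\setminus H$ can burn once $A^c$ has burned.
\item[(iii)] The restriction $\eta|_A$ has no forbidden subconfiguration.
\end{enumerate}
Necessity of (iii) comes from \eqref{pre:burning}: a subset $B\subset A$ has the same $\deg(x,B)$ in $G$ and in $G_2$, and no forbidden subconfiguration can meet $A^c$ because $A^c$ burns. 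Sufficiency uses the abelian, monotone nature of burning. If (i) and (ii) hold, the held burning stops exactly at $A$. Conversely, if the held burning stops at $A$, then (i) holds because holding more vertices only delays burning and the burned set $A^c$ is reached, and (ii) holds by the stopping rule.

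Condition (ii) says precisely that $\eta|_A$ is stable on $G_2$, since the degree of $x\in A\setminus H$ in $G_2$ is $\deg(x,A)$. Together with (iii) and \eqref{pre:burning} applied to $G_2$, conditions (ii) and (iii) say that $\eta|_A$ is recurrent on $G_2$. By \eqref{pre:count}, the count of recurrent $\eta$ with final unburned set $A$ is therefore $|\operatorname{SpanT}(G_1)|\cdot|\operatorname{SpanT}(G_2)|$, matching the tree side.

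\textbf{Main obstacle.} The delicate step is the converse direction above: that every recurrent $\eta$ on $G$ restricts to recurrent pieces on $G_1$ and $G_2$, and conversely that any such pair of pieces assembles to a recurrent $\eta$ on $G$ with final unburned set exactly $A$. To check the assembly, I will verify that after $A^c$ burns and $H$ is released, the rest of the burning on $A$ is the burning of $\eta|_A$ on $G_2$. This holds because $\eta(x)$ is compared with $\deg(x,\text{unburned})$, and this degree is the same in $G$ and in $G_2$ for $x\in A$ once $A^c$ is burned. Since the burning runs to completion on $G_2$, all of $A$ burns, so $\eta$ has no forbidden subconfiguration and is recurrent on $G$ by \eqref{pre:burning}.
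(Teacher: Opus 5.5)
Your argument is correct, and it takes a different route from the one the paper relies on. The paper does not prove the lemma itself. It cites \citet{JaraiWerning2014} and \citet{GamlinJarai2014}, where the statement comes from an anchored burning bijection. That bijection is a tree-building version of the burning algorithm, run first with the held vertices frozen and then with them released. It maps recurrent sandpiles bijectively to spanning trees so that the final unburned set becomes exactly the union of the pasts of the held vertices.

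You instead fix the value~$A$ of the random set and count both sides, factoring each count as $|\operatorname{SpanT}(G_1)|\cdot|\operatorname{SpanT}(G_2)|$. This is essentially the restriction-and-shift decomposition the paper uses for waves before \eqref{fv:shape-mass}. The new device is deleting the sink edges of $A\setminus H$, so that stability on~$G_2$ is exactly condition~(ii). Your route is self-contained: it uses only \eqref{pre:burning} and \eqref{pre:count} and needs no tie-breaking rule. The bijection gives more, namely a pointwise coupling, which the cited works need on infinite graphs. The lemma, however, only asserts equality of laws on a finite graph, so counting suffices.

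A few points should be tightened.

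\emph{Disconnected pieces.} When $G_1$ or $G_2$ is disconnected, \eqref{pre:count} does not apply. Both counts still vanish, because a component missing the sink is a forbidden subconfiguration of every stable sandpile and there is no spanning tree.

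\emph{Necessity of~(i).} The right reason is that no vertex of~$A$ ever burns in the $H$-held process. Holding all of~$A$ therefore leaves the process unchanged step by step. The monotonicity ``holding more vertices only delays burning'' is what you need in the sufficiency direction. There, (i) and $H\subset A$ give that the $H$-held final unburned set lies in~$A$. Then (ii) shows that the first vertex of~$A$ to burn cannot lie in $A\setminus H$, since $\deg(x,U)\geq\deg(x,A)>\eta(x)$ while $A\subset U$.

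\emph{Assembly.} You should also check that the assembled $\eta$ is stable on~$G$. This follows from the stability of the two pieces and from $\deg(x,A)$ being at most the degree of~$x$ in~$G$. Recurrence of the assembled $\eta$ can also be seen directly, without the two-phase burning. Let $B$ be a forbidden subconfiguration. If $B\subset A$, then $B$ is forbidden for $\eta|_A$ on~$G_2$. Otherwise $B\setminus A$ is forbidden for the shifted sandpile on~$G_1$, since each $x\in B\setminus A$ satisfies $\eta(x)-\deg(x,A)<\deg(x,B)-\deg(x,A)\leq\deg(x,B\setminus A)$.
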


Let~$\mu_N$ be the uniform measure on the recurrent sandpiles of~$[-N,N]^2$. By~\citet[Theorem~1]{AthreyaJarai2004},
\begin{equation}\label{pre:sandpile-limit}
\mu_N\to\mu\quad\text{weakly as }N\to\infty\,.
\end{equation}

\subsection{Wilson's algorithm with two root sets}\label{ssec:prelim-wilson}

Fix a nonempty set of roots, either in a finite connected multigraph or in~$\Z^2$ with finite complement, and an enumeration of the vertices. \textbf{Wilson's algorithm}~\citep{Wilson1996} builds a rooted spanning forest with these roots. The algorithm starts with the forest that consists of the roots and has no edges. While some vertex is not in the forest, an independent simple random walk runs from the first such vertex in the enumeration until the walk hits the forest, and the loop-erasure of the walk, obtained by erasing its loops in chronological order and called a \textbf{branch}, is added to the forest. By~\citet[Theorem~4.1]{LyonsPeres2016}, applied after identifying the roots to one vertex, for every enumeration
\begin{equation}\label{pre:wilson}
\text{the resulting forest is uniformly distributed over the rooted spanning forests with these roots}\,.
\end{equation}

Let~$A$ and~$\Gamma$ be disjoint nonempty sets of vertices, and run Wilson's algorithm with roots~$A\cup\Gamma$. A vertex is \textbf{joined to}~$\Gamma$ if its component in the resulting forest contains a vertex of~$\Gamma$, and \textbf{joined to}~$A$ otherwise. When~$A\cup\Gamma\subset\Z^2$ has finite complement, let~$\nu_{A,\Gamma}$ be the law of the set of vertices joined to~$\Gamma$. By~\eqref{pre:wilson}, the law~$\nu_{A,\Gamma}$ does not depend on the enumeration. Enumerating a vertex~$y$ first gives
\begin{equation}\label{fo:color-marginal}
\nu_{A,\Gamma}(\text{$y$ is joined to~$\Gamma$})=\PP_y(\text{the walk visits~$\Gamma$ before~$A$})\,.
\end{equation}
For a set~$V\subset\Z^2$, let~$\cL(V)$ be the number of lattice edges from~$V$ to vertices joined to~$\Gamma$. Thus, if the common ancestor of~$S$ in a tree~$t$ in~$[-N,N]^2$ is not~$s_N$ and the vertices outside its past are the vertices joined to~$\Gamma$, then the count~$\cL_t$ of Section~\ref{ssec:prelim-trees} equals~$\cL(\operatorname{MinSt}(t,S))$.

\begin{lemma}\label{fo:lattice-monotonicity}
Let~$A\subset A'$ and~$\Gamma'\subset\Gamma$ be subsets of~$\Z^2$ for which~$\nu_{A,\Gamma}$ and~$\nu_{A',\Gamma'}$ are defined. Then~$\nu_{A',\Gamma'}$ is stochastically dominated by~$\nu_{A,\Gamma}$. In particular, for every set~$V\subset\Z^2$, the law of~$\cL(V)$ under~$\nu_{A',\Gamma'}$ is stochastically dominated by its law under~$\nu_{A,\Gamma}$.
\end{lemma}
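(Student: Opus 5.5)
We will construct a monotone coupling of the two forests by running Wilson's algorithm for both root sets with the same random walks. Fix one enumeration of the vertices of $\Z^2$. Because $A\cup\Gamma$ and $A'\cup\Gamma'$ both have finite complement, there are only finitely many non-root vertices. Attach to each vertex $y$ an independent simple random walk $X^y$ started at $y$. Both forests are built from these same walks. By \eqref{pre:wilson}, and since $\nu$ does not depend on the enumeration, each marginal is correct. The law of the set of joined vertices is all that matters, so it suffices to show that in this coupling every vertex joined to $\Gamma'$ in the primed construction is also joined to $\Gamma$ in the unprimed one.

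We first dispose of root vertices. A vertex of $\Gamma'$ lies in $\Gamma$, so it is joined to $\Gamma$. A vertex of $A'$ is never joined to $\Gamma'$, because $A'\cap\Gamma'=\emptyset$ and each component contains exactly one root.

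For non-root vertices we use the standard reformulation of joining. A vertex $y$ is joined to $\Gamma$ exactly when the walk started from $y$, which runs until it hits the current forest, ends at a point that is joined to $\Gamma$. Here the tail of the walk after its loop-erasure does not matter, since the branch ends where the walk first hits the forest. If $y$ already lies in the forest, its status is inherited. We now argue by induction along the enumeration, with the claim that the set $J'_k$ of vertices joined to $\Gamma'$ after $k$ steps satisfies $J'_k\subset J_k$, where $J_k$ is the corresponding set for $(A,\Gamma)$.

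The difficulty is that the two forests have different vertex sets at the same stage, so the walks stop at different times. To handle this, we do not couple step by step. Instead, we use the color description. Run the walk $X^y$ for the first vertex $y$ not in either forest. This is the delicate point, and we expect it to be the main obstacle. Rather than prove it stage by stage, we would use the \emph{cycle-popping} (stack) representation of Wilson's algorithm. Put independent stacks of uniform arrows at every vertex. For a given root set, the tree is obtained by popping cycles, and the final coloring is determined by following arrows on top of the stacks until a root is reached. With roots $A\cup\Gamma$ versus $A'\cup\Gamma'$, the extra roots $A'\setminus A$ and the removed roots $\Gamma\setminus\Gamma'$ only change which vertices count as absorbing.

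A cleaner route that avoids this is to prove the claim one root change at a time, by induction on the finite differences. It suffices to treat adding a single vertex $a$ to $A$, and removing a single vertex $g$ from $\Gamma$ (when $\Gamma\setminus\{g\}\neq\emptyset$). Removing $g$ from $\Gamma$ can be realized by first adding $g$ to $A$ and then deleting $g$ from $A$. The deletion is harmless, since $g$ is then a non-root vertex whose color is decided by its walk. Adding $g$ to $A$ falls under the first case.

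For adding a single root $a$, run Wilson's algorithm with $a$ enumerated first. In the $(A\cup\{a\},\Gamma)$ forest, $a$ is a root joined to $A$. In the $(A,\Gamma)$ forest, the walk from $a$ produces a branch $\beta$ ending in $A\cup\Gamma$. Then continue both runs with the same later walks, each stopped on hitting the respective current forest; the forests differ only in that one contains $\beta$ and the other only the point $a$. Every later walk stops no later in the unprimed run, because that run's forest is a superset. If the unprimed walk stops at a point of the common forest, the outcome coincides. If it stops on $\beta$, the unprimed color is the color of $\beta$, while the primed walk continues. We would then argue that the primed walk either ends joined to $A$, which causes no issue, or ends joined to $\Gamma$. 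In the latter case we need $\beta$ to be $\Gamma$-colored, and this is not automatic.

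So we would instead enumerate $a$ last in both runs. Then the two runs agree on all vertices other than $a$ up to the moment they first differ, namely when a walk hits $a$. At that point the primed walk stops with color $A$, while the unprimed walk continues with some color. From then on, the primed $\Gamma$-colored set is contained in the unprimed one. To make this rigorous, we would appeal to the stack representation, where colorings are determined monotonically by the absorbing set: making $a$ absorbing in $A$ can only turn $\Gamma$-paths into $A$-paths. This monotonicity of the stack coloring is the key point to verify.

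The statement for $\cL(V)$ then follows immediately, because $\cL(V)$ is an increasing function of the set of vertices joined to $\Gamma$.
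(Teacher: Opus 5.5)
There is a genuine gap: you never establish the monotone coupling, which is the whole content of the lemma, and one of your reductions goes the wrong way. The coupling you set up first, with one walk $X^y$ per starting vertex, is not monotone. After $a$ is added to $A$, the unprimed walk from $y_1$ can pass through $a$ and then a vertex $w$ before ending in $A$, while the primed walk stops at $a$ and never reaches $w$. A later walk $X^{y_2}$ that first meets the unprimed forest at $w$ is then joined to $A$ in the unprimed run, yet in the primed run it passes $w$ and may reach $\Gamma$. So your claim that ``from then on'' the primed $\Gamma$-colored set is contained in the unprimed one fails for that coupling. For the stack coupling you only name it as ``the key point to verify''. Your description of that point is also inaccurate: changing the root set changes which cycles are popped, and hence which arrows end on top, not only which vertices are absorbing.

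What is needed is an argument like the following. For root sets $R\subset R'$ and the same stacks, every cycle of arrows avoiding $R'$ also avoids $R$. So, by the order-independence of cycle popping, the popping for $R$ can be carried out by first completing the popping for $R'$. From then on, every cycle consists of vertices whose top-arrow paths never reach $R$, and all of these lie in the trees of the roots of $R'\setminus R$. Hence the arrows at every other vertex are never popped again, and each vertex in the tree of a root of $R$ keeps that root. With $R=A\cup\Gamma$ and $R'=R\cup\{a\}$, this gives exactly your containment. The paper does not build a coupling by hand. It quotes the stochastic domination of uniform spanning trees under identification of vertices (Lyons--Peres, Exercise~10.8 and Theorem~10.4), which yields a coupling in which the forest with more roots is contained in the forest with fewer roots, and then only tracks which root each component contains.

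Your reduction of removing $g$ from $\Gamma$ fails at the step you call harmless. Deleting the root $g$ from $A$ is the reverse of adding a root to $A$, so it moves the law in the wrong direction. By \eqref{fo:color-marginal}, it replaces the probability $\PP_y(\text{the walk visits }\Gamma\setminus\{g\}\text{ before }A\cup\{g\})$ that a vertex $y$ is joined to $\Gamma\setminus\{g\}$ by $\PP_y(\text{the walk visits }\Gamma\setminus\{g\}\text{ before }A)$. This is never smaller, and for $y=g$ it goes from $0$ to a positive number as soon as $g$ has a neighbor in $\Gamma\setminus\{g\}$. Thus both $\nu_{A,\Gamma}$ and $\nu_{A,\Gamma\setminus\{g\}}$ dominate $\nu_{A\cup\{g\},\Gamma\setminus\{g\}}$, and your two steps compare nothing. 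Removing a root of $\Gamma$ needs its own argument. The cycle-popping fact above gives it directly with $R=A\cup(\Gamma\setminus\{g\})\subset R'=A\cup\Gamma$, since every vertex joined to $A$ lies in the tree of a root of $R$.

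Finally, $A'\setminus A$ and $\Gamma\setminus\Gamma'$ may be infinite, as in Step~1 of the proof of Proposition~\ref{fo:local-comparison}. An induction on single root changes must therefore first relabel the roots in $\Gamma\cap A'$ as roots of $A'$, which is monotone with the same forest. Only $\Gamma\setminus(A'\cup\Gamma')$ and $A'\setminus(A\cup\Gamma)$ are finite, and these are precisely the two steps of the paper's proof. Your handling of the root vertices and your deduction for $\cL(V)$ are fine.
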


\begin{proof}
Pass from the root sets~$A$ and~$\Gamma$ to~$A'$ and~$\Gamma'$ in two steps: remove the roots in~$\Gamma\setminus(A'\cup\Gamma')$, then add the roots in~$A'\setminus(A\cup\Gamma)$ and count the roots in~$\Gamma\cap A'$ as roots of~$A'$. Adding roots identifies vertices, so by~\citet[Exercise~10.8 and Theorem~10.4]{LyonsPeres2016} each step has a coupling in which the forest with more roots is contained in the forest with fewer roots. In these couplings, every vertex joined to~$A$ at the start is still joined to~$A$ after the first step, which keeps the roots of~$A$, and is joined to~$A'$ after the second step, which adds only roots of~$A'\setminus A$.
\end{proof}

\subsection{Electrical networks and random walk}\label{ssec:prelim-electrical}

We view a multigraph as an electrical network in which every edge has conductance one. The \textbf{energy} of a function~$f$ on the vertices is~$\sum_{xy}(f(x)-f(y))^2$, where the sum runs over the edges~$xy$. Let~$A$ and~$\Gamma$ be disjoint nonempty sets of vertices, either of a finite connected network or of~$\Z^2$ with one of them finite and the other with finite complement. In the second case we work in the finite network obtained from~$\Z^2$ by identifying~$A$ to one vertex and~$\Gamma$ to another and deleting the resulting loops, and we view functions on this network as functions on~$\Z^2$. The \textbf{voltage} between~$A$ and~$\Gamma$ is the function that equals one on~$A$ and zero on~$\Gamma$ and is harmonic at every other vertex. The \textbf{current} through an edge from~$x$ to~$y$ is the voltage at~$x$ minus the voltage at~$y$. The \textbf{effective conductance}~$\mathcal C(A\leftrightarrow\Gamma)$ between~$A$ and~$\Gamma$ is the energy of the voltage, and the \textbf{effective resistance} is~$1/\mathcal C(A\leftrightarrow\Gamma)$. By~\citet[equation~(2.2) and the uniqueness principle in Section~2.1]{LyonsPeres2016}, every vertex~$y$ satisfies
\begin{equation}\label{pre:voltage}
(\text{the voltage between~$A$ and~$\Gamma$ at~$y$})=\PP_y(\text{the walk visits~$A$ before~$\Gamma$})\,.
\end{equation}
By Dirichlet's principle~\citep[Exercises~2.13 and~2.79]{LyonsPeres2016},
\begin{equation}\label{pre:dirichlet}
\mathcal C(A\leftrightarrow\Gamma)=\min_f\sum_{xy}(f(x)-f(y))^2\,,
\end{equation}
where~$f$ runs over the functions that equal one on~$A$ and zero on~$\Gamma$. By~\citet[Exercise~2.47 and equation~(2.7)]{LyonsPeres2016},
\begin{equation}\label{pre:count-conductance}
\sum_{xy:\,x\in A}\PP_y(\text{the walk visits~$\Gamma$ before~$A$})=\mathcal C(A\leftrightarrow\Gamma)\,.
\end{equation}
\section{Reduction to the uniform spanning tree}\label{rd:section}

Fix an integer~$r\geq2$, and let~$S$ consist of the origin, its four neighbors, and~$(r,0)$. In this section we deduce Theorem~\ref{th:main} from a reduction to the uniform spanning tree and two results about uniform spanning trees and forests. Proposition~\ref{sp:reciprocal-reduction}, which we prove in Section~\ref{sp:section}, bounds the tail of the radius by~$\limsup_{N\to\infty}\EE[\cL_N^{-1}]$ plus~$Cr(M-r)^{-\nf13}$, where~$\cL_N$ is the count~$\cL_t$ of Section~\ref{ssec:prelim-trees} for~$t=T_N$. Lemma~\ref{rd:conditional-law}, which we prove here, and Proposition~\ref{ct:reciprocal}, which we prove in Sections~\ref{fo:section}--\ref{sec:electrical}, bound~$\EE[\cL_N^{-1}]$.

\begin{proposition}\label{sp:reciprocal-reduction}
There is a universal constant~$C<\infty$ such that every integer~$M\geq r+2$ satisfies
\begin{equation}\label{sp:reduction-bound}
\mu(\rho>M)\leq\limsup_{N\to\infty}\EE\bigl[\cL_N^{-1}\bigr]+Cr(M-r)^{-\nf13}\,.
\end{equation}
\end{proposition}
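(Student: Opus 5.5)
We first pass to a finite box. The event $\{\rho>M\}$ is not local, so we bound it by a local event. If $\rho>M$, then the avalanche either never stops or makes some vertex of $\partial[-M,M]^2$ topple after finitely many legal topplings, and both possibilities are witnessed by finitely many heights. We therefore fix a large $K>M$ and consider the event $E_K$ that stabilizing $\eta+\delta_0$ inside $[-K,K]^2$ makes a vertex at distance $M+1$ topple. This event depends only on the heights in $[-K,K]^2$ and increases to $\{\rho>M\}$ as $K\to\infty$. By \eqref{pre:sandpile-limit} it then suffices to bound $\mu_N(\rho_N>M)$ for $N\ge M+1$, where $\rho_N$ is the radius of the avalanche in the box. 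Toppling in the box dominates toppling in the subbox by the abelian property and monotonicity, so this reduction loses nothing.

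In the box we decompose the avalanche into $u_N(0)$ waves. Each wave is the stabilization of the sandpile after the origin topples once, with the origin treated as a sink, and each wave is finite. The last wave misses a neighbor of the origin, since otherwise the origin would receive four grains and be unstable again. So if $\rho_N>M$, one of two things happens. Either some wave misses a marked vertex of $S$ and leaves $[-M,M]^2$, or every wave leaving $[-M,M]^2$ contains all of $S$. In the second case, some wave $W_k\supset S$ is immediately followed by a wave $W_{k+1}$ that misses a marked vertex. We bound the probability of each case by the expected number of waves, or pairs of consecutive waves, of that type.

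\textbf{The second term.} Following \citet{BhupatirajuHansonJarai2017}, the sandpiles left by waves under $\mu_N$, weighted by their counts, correspond bijectively to two-component spanning forests separating $0$ from $s_N$, with the wave as the component of $0$. Hence the expected number of waves with property $P$ is
\[
\frac{|\{\text{forests whose origin component has }P\}|}{|\operatorname{SpanT}|}.
\]
This uses \eqref{pre:count} and \eqref{pre:permutation}. A wave that misses a marked vertex $z\in S\setminus\{0\}$ has a forest in which $z$ lies in the sink component. We add the edge from the wave's exit vertex: for a neighbor $z$ of the origin we join the edge $0z$. For $z=(r,0)$ we instead cut the origin component and attach the segment vertex where it leaves. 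This produces a spanning tree in which the wave is $\operatorname{Past}(v)$ for some $v$ among the $r+4$ vertices that are neighbors of $0$ or lie on $[0,r]\times\{0\}$. Each tree arises from at most one forest for a given $v$, since we remove the outgoing edge of $v$. This gives the bound
\[
\sum_v \PP\bigl(\operatorname{Past}_{T_N}(v)\not\subset[-M,M]^2\bigr).
\]
By \eqref{pre:past-limit} and the Barlow--Masson bound, each summand is at most $C(M-r)^{-1/3}$ in the limit, which yields $Cr(M-r)^{-1/3}$.

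\textbf{The first term, the main obstacle.} We encode the sandpile left by $W_k$ by burning from the sink while holding the origin, using Lemma~\ref{fv:held-burning} together with \citet{JaraiWerning2014}. This gives a spanning tree of $W_k$ oriented toward $0$ and a spanning tree of the complement oriented toward $s_N$, and the pair determines the sandpile. Next we show that $W_{k+1}$ spreads from $0$ along the first tree and can only stop at vertices adjacent to the outside of $W_k$. Consequently, if $W_{k+1}$ misses some $z\in S$, the tree path from $z$ to $0$ contains a vertex with a lattice edge leaving $W_k$, and there is at least one red edge.

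We then join the two trees along any red edge $xy$ by reorienting the path from $x$ toward $0$, so that it exits through $xy$. The result is a spanning tree $t$ of the box in which $W_k=\operatorname{Past}_t(\text{common ancestor of }S)$, the red edges are exactly those counted by $\cL_t$, and the pair can be recovered from $t$ together with the choice of red edge. Each pair therefore produces $\cL_t$ distinct trees, and summing $1/\cL_t$ over all trees shows that the expected number of such pairs is at most $\EE[\cL_N^{-1}]$.

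The delicate points are three. We must verify that the spreading claim holds exactly, including the role of the vertex $(r,0)$ that is not adjacent to $0$. We must verify that the joining map is injective, which requires identifying the common ancestor and the cut edge intrinsically from $t$. Finally, we must check that the weights match, meaning each post-wave sandpile is counted with the correct multiplicity relative to $\mu_N$. I would first check all of this by direct verification of the burning rule at the reoriented path.
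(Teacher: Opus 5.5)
Your overall plan matches the paper's. You take a union bound over waves in the box. You handle the second term by adding one edge to the two-component forest of a wave that misses a marked vertex. You handle the first term by encoding the sandpile left by $W_k\supset S$ as a pair of trees and joining them along one of the $\cL_t$ red edges with weight $\cL_t^{-1}$.

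The gap is the step you defer as your first ``delicate point,'' and the whole first term rests on it: the claim that the next wave spreads from $0$ along the tree of $W_k$ and can stop only at vertices adjacent to the outside of $W_k$. With the encoding you describe (burn with only the origin held, then take burning-bijection trees of the two pieces), this is not justified, and for the ordinary burning bijection it can fail. Write $\Omega=W_k$ and $\eta$ for the sandpile it leaves. When $\eta$ is burned on $\Omega$ from $0$, a vertex $z$ may burn only because a neighbor $q$ with a neighbor in $\Omega^c$ burned at the previous step. Meanwhile the bijection may attach $z$ to a different neighbor burned at that step, so the tree path from $z$ can avoid the outer boundary of $\Omega$. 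But $q$ burned by counting only its $\deg(q,\Omega)<4$ edges inside $\Omega$. So nothing forces $q$ to topple in the next wave, and without $q$ the burning rule no longer guarantees that $z$ topples.

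The missing idea is to hold the right vertices and to argue by counting rather than pointwise on a fixed tree. Burn $\eta$ on $\Omega$, with sink $0$, while holding every vertex of $\Omega$ that has a neighbor in $\Omega^c$.
\begin{itemize}
\item The origin is not held, because its four neighbors lie in $S\subset\Omega$. This, rather than the vertex $(r,0)$, is the point that needs checking.
\item Every burned vertex has all four neighbors in $\Omega$. So after the origin topples, the burned vertices topple legally in the next wave in the order of their steps.
\item Hence $S\not\subset W(U\eta)$ forces the final unburned set to meet $S$.
\end{itemize}
By Lemma~\ref{fv:held-burning} and~\eqref{pre:count}, the number of recurrent sandpiles on $\Omega$ whose final unburned set meets $S$ equals the number of spanning trees $t$ of $\Omega$ oriented toward $0$ in which the union of the pasts of the held vertices meets $S$. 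That happens exactly when $\operatorname{MinSt}(t,S)$ contains a held vertex, that is, when $\cL_t>0$. Combine this with the injection of $\{\eta:W(\eta)=\Omega\}$ into pairs of recurrent sandpiles on $\Omega$ and $\Omega^c$. This bounds the number of such $\eta$ with $S\not\subset W(U\eta)$ by $|\operatorname{SpanT}(\Omega^c)|\cdot|\{t:\cL_t>0\}|$, and your joining argument applies from there.

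Two smaller remarks.
\begin{itemize}
\item The event $\{\rho>M\}$ is in fact determined by the heights in $[-M-1,M+1]^2$, so $\mu(\rho>M)=\lim_N\mu_N(\rho_N>M)$ holds directly. Your detour is unnecessary but harmless.
\item The Barlow--Masson bound concerns paths in the dual tree, so you need one more step. The duals of the edges leaving $\operatorname{Past}_T(x)$ form a cycle, made of the dual of the outgoing edge of $x$ and the dual tree path between its endpoints. That path must reach distance more than $M-r$ when the past leaves $[-M,M]^2$.
\end{itemize}
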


Let~$K$ be a finite connected set of lattice vertices, and let~$\gamma\subset\Z^2\setminus K$ be a connected set with finite complement that contains a neighbor of a vertex of~$K$. Under~$\nu_{K,\gamma}$, the count~$\cL(K)$ of Section~\ref{ssec:prelim-wilson} satisfies~$\cL(K)\geq1$, since~$\gamma$ contains a neighbor of~$K$.

\begin{proposition}\label{ct:reciprocal}
There are universal constants~$a>0$ and~$C<\infty$ such that every~$K$ and~$\gamma$ as above satisfy
\begin{equation*}
\EE_{\nu_{K,\gamma}}\bigl[\cL(K)^{-1}\bigr]\leq C\bigl(\log(\diam_\infty K+2)\bigr)^{-a}\,.
\end{equation*}
\end{proposition}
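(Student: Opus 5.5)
The plan is a multiscale argument. Let $D=\diam_\infty K$; we may assume $D$ is large. Choose scales $\ell_1<\ell_2<\dots<\ell_n\le D/4$ with $\ell_{j+1}=\ell_j^{b}$ for a large universal constant $b$, so that $n\asymp\log\log D$. We build the forest of $\nu_{K,\gamma}$ with Wilson's algorithm. Fix a vertex $x_0\in K$ adjacent to $\gamma$; since $K$ is connected with diameter $D$, for every $j$ the set $K$ meets $x_0+\partial[-\ell_j,\ell_j]^2$. At stage $j$ we run walks from vertices in a region of scale $\ell_j$ around $x_0$ that neighbour $K$ but have not yet been added to the forest. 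Let $\cL_j$ be the number of lattice edges from $K$ to vertices revealed by stage $j$ and joined to $\gamma$. By Lemma~\ref{fo:lattice-monotonicity}, $\cL(K)\ge\cL_j$, and $\cL_j$ is nondecreasing in $j$. Let $\mathcal F_j$ be the $\sigma$-field of the first $j$ stages.

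The core is a one-scale estimate. We want universal $\varepsilon>0$ and $c>0$ such that, given $\mathcal F_{j-1}$ and outside a bad event $B_j$ with $\PP(B_j)\le C\ell_{j-1}^{-1/2}$,
\[
\PP\bigl(\cL_j-\cL_{j-1}\ge c\log\ell_j\,\big|\,\mathcal F_{j-1}\bigr)\ge\varepsilon .
\]
The bad event $B_j$ is that some branch added before stage $j$ leaves $x_0+[-\ell_{j-1}^{2},\ell_{j-1}^{2}]^2$. Its probability is controlled by Kesten's discrete Beurling estimate, because each such branch is a walk stopped on hitting the connected set $K\cup\gamma$, which has diameter larger than $\ell_{j-1}^2$.

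Off $B_j$, the earlier branches act as extra roots. Combined with Lemma~\ref{fo:lattice-monotonicity}, this shows the conditional count dominates a count for $\nu_{A,\gamma}$, where $A$ is $K$ together with those branches.

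\begin{itemize}
\item \textbf{Mean.} By \eqref{fo:color-marginal} and \eqref{pre:count-conductance}, the conditional mean of the new count is essentially a conductance from $K$ to $\gamma$ within the annulus of scales $\ell_{j-1}^{2}$ to $\ell_j$. This annulus contains a piece of $K$ crossing it and $\gamma$ touching $K$. The Dirichlet principle \eqref{pre:dirichlet}, with test functions of the form $\log|y-x_0|$, should give a lower bound of order $\log\ell_j-C\log\ell_{j-1}\ge\tfrac12\log\ell_j$ once $b$ is large.
\item \textbf{Variance.} We need $\mathrm{Var}\le C(\log\ell_j)^2$, and then the Paley--Zygmund inequality gives the displayed bound. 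For this we pass to the dual graph: the interface between vertices joined to $A$ and those joined to $\gamma$ is a path in the dual uniform spanning tree. Two-point edge probabilities along the boundary come from the Kenyon--Wilson identities, which reduce the second moment to boundary Green function sums. These should be of order $\log\ell_j$ on average, which I would try to show via a square-tiling conformal normalization of the domain.
\end{itemize}

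The iteration is then as follows. Put $X_j=\cL_j^{-1}\le1$. The one-scale estimate gives
\[
\EE[X_j\mid\mathcal F_{j-1}]\le(1-\varepsilon)X_{j-1}+\frac{1}{c\log\ell_j}+\mathbf 1_{B_j}.
\]
The inequality holds because on the good event $X_j\le 1/(c\log\ell_j)$, and otherwise $X_j\le X_{j-1}$ by monotonicity. Taking expectations and iterating $n$ times gives
\[
\EE[\cL(K)^{-1}]\le(1-\varepsilon)^n+\sum_j(1-\varepsilon)^{n-j}\Bigl(\frac{C}{\log\ell_j}+C\ell_{j-1}^{-1/2}\Bigr).
\]
The sum is dominated by its last terms, since $\log\ell_j$ grows doubly exponentially. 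With $n\asymp\log\log D$, the factor $(1-\varepsilon)^n$ equals $(\log D)^{-a}$, which proves the proposition.

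The main obstacle is the variance bound in the one-scale estimate. It must hold uniformly over the arbitrary, adversarial shapes of $K$ and of the earlier branches, and the mean is only logarithmic, so correlations between the counted edges must be controlled sharply. A secondary difficulty is showing that the earlier branches, which now act as roots, remove only a small fraction of the mean at the next scale.
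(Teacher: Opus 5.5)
Your architecture matches the paper's: Wilson's algorithm run scale by scale, Beurling confinement of the earlier branches, a one-scale Paley--Zygmund bound, and the recursion for $\EE[\cL_j^{-1}]$ over order $\log\log\diam_\infty K$ scales. The recursion itself is fine.

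The gap is the one-scale estimate given $\mathcal F_{j-1}$, at exactly the point you call secondary. You need many edges from $K$. The tools you invoke control something else: \eqref{pre:count-conductance} for the mean, and for the variance the dual interface path started at an edge where the two root sets touch, with the Kenyon--Wilson identities. Both control the number of edges from the whole root set $A$ to vertices joined to $\Gamma$. The earlier branches in $A\setminus K$ may run alongside $\gamma$ and alongside the branches joined to $\gamma$, and so carry far more than $\log\ell_j$ such edges. Then neither a lower bound on the $A$-count nor its second moment says anything about the $K$-count.

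Translate so that $x_0=0$. Suppose instead you use Lemma~\ref{fo:lattice-monotonicity} to put a whole box $[-h,h]^2$ into $A$, and keep only the unbounded component $\Gamma'$ of $\gamma\setminus[-2h,2h]^2$. This is the natural way to make your annulus-conductance bound on the mean rigorous, since the box then contributes at most $\mathcal C([-h,h]^2\leftrightarrow\Z^2\setminus[-2h,2h]^2)=O(1)$ in mean. But now the two root sets no longer touch at any controlled place. There is no edge $e$ from which the dual path starts, and your variance argument has nothing to apply to.

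The missing idea is the needle of Lemmas~\ref{ct:needle} and~\ref{ct:revealed-roots}. Add to $A$ the box $[-h,h]^2$ together with a path $\pi$ from the box to a vertex $w\in\partial[-2h,2h]^2$ adjacent to $\Gamma'$. Choose $\pi$ to step toward $0$ in a coordinate of largest absolute value, so that $\mathcal C([-h,h]^2\cup\pi\leftrightarrow\Z^2\setminus[-2h,2h]^2)\leq C\log h$. A careless path, for instance one running along $\partial[-2h,2h]^2$, would cost order $h$. The contact at $w$ lets the touching one-scale bound apply around $w$; this is Proposition~\ref{fo:local-comparison}, whose variance input, Proposition~\ref{el:reference-moment}, you only sketch. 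Markov's inequality then shows that the edges from $A'\setminus K$ exceed $\tfrac1{16}\log\ell$ with probability at most $16C\log h/\log\ell$, which is small once $\ell\geq h^\kappa$.

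Two smaller corrections.
\begin{itemize}
\item With the box $x_0+[-\ell_{j-1}^2,\ell_{j-1}^2]^2$, the bound $\PP(B_j)\leq C\ell_{j-1}^{-1/2}$ does not follow. Each of up to order $\ell_{j-1}^2$ walks is only known to escape with probability at most $C(\ell_{j-1}/\ell_{j-1}^2)^{1/2}$, so the union bound gives nothing. The box $[-\ell^6,\ell^6]^2$ used in the paper gives $C\ell^{-5/2}$ per walk and $C\ell^{-1/2}$ in total.
\item The one-scale bound should be stated for $\cL_j$ rather than for the increment $\cL_j-\cL_{j-1}$. The increment vanishes identically when $\gamma$ already contains every neighbor of $K$, and your recursion only uses $\cL_j\geq c\log\ell_j$.
\end{itemize}
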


The next lemma links the two propositions.

\begin{lemma}\label{rd:conditional-law}
Let~$N\geq r$. On the event that the common ancestor of~$S$ in~$T_N$ is not~$s_N$, let~$\Gamma_N$ be the union of~$\Z^2\setminus [-N,N]^2$ and the set of vertices of~$[-N,N]^2$ that follow the common ancestor on the path in~$T_N$ from the common ancestor to~$s_N$. Then, on this event, the set~$\Gamma_N$ avoids~$\operatorname{MinSt}(T_N,S)$, and the conditional law of~$\cL_N$ given~$\operatorname{MinSt}(T_N,S)$ and this path is the law of~$\cL(\operatorname{MinSt}(T_N,S))$ under~$\nu_{\operatorname{MinSt}(T_N,S),\Gamma_N}$.
\end{lemma}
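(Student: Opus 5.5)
The plan is to build $T_N$ with Wilson's algorithm using an enumeration adapted to the conditioning, and then read off the conditional law by the spatial Markov property of the uniform spanning tree. First, the claim that $\Gamma_N$ avoids $\operatorname{MinSt}(T_N,S)$ is deterministic. The vertices of $[-N,N]^2$ in $\Gamma_N$ lie strictly after the common ancestor $x$ on the oriented path from $x$ to $s_N$. If such a vertex $z$ lay on the oriented path from some $v\in S$ up to $x$, then $z$ would precede $x$ on that path. Since $z$ also follows $x$, this would give a cycle in the tree, which is impossible. Vertices outside $[-N,N]^2$ are not in the tree paths at all.

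For the conditional law, fix a possible value $K$ of $\operatorname{MinSt}(T_N,S)$ and a possible path $\pi$ from its top vertex $x$ to $s_N$. The event $E$ that these are realized is the event that $T_N$ contains the edge set $F$ formed by the edges of $K$ and of $\pi$, with $x$ the common ancestor. The condition on $x$ is automatic once the edges of $K$ and the outgoing edge of $x$ along $\pi$ are present.

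Conditionally on containing the fixed forest-free edge set $F$, a uniform spanning tree is the union of $F$ with a uniform spanning tree of the graph in which $F$ is contracted. Equivalently, the remaining edges form a uniform rooted spanning forest of $[-N,N]^2$ with roots $K\cup\pi$, where $\pi$ includes $s_N$. I will check directly that this is the right conditional measure: the map from trees containing $F$ to rooted spanning forests with roots the vertex set of $F$, given by deleting $F$, is a bijection. The one thing to verify is that adding back $F$ to any such forest gives a spanning tree of the box oriented toward $s_N$ with $\operatorname{MinSt}=K$ and ancestral path $\pi$. This holds because the new components hang off $K\cup\pi$ and never alter the oriented paths from $S$. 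So the conditional law is uniform on rooted forests with roots $K\cup(\pi\setminus\{x\})\cup\{s_N\}$.

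Next, $x\in K$, and the vertices outside $\operatorname{Past}_{T_N}(x)$ are exactly those whose component in this forest is rooted in $\pi\setminus\{x\}$ or in $s_N$. Indeed, a vertex whose path enters $K$ passes through $x$, while a vertex whose path enters $\pi$ strictly after $x$ never returns to $x$.

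Identifying $s_N$ with $\Z^2\setminus[-N,N]^2$ turns this forest into the rooted forest on $\Z^2$ with roots $K\cup\Gamma_N$, where $\Gamma_N$ has finite complement. By \eqref{pre:wilson}, the set of vertices joined to $\Gamma_N$ then has law $\nu_{K,\Gamma_N}$. The lattice edges from $K$ to $\Z^2\setminus[-N,N]^2$ are exactly the edges to $s_N$ counted in $\cL_t$, and $\Gamma_N$ is disjoint from $K$. So, as noted after \eqref{fo:color-marginal}, $\cL_N=\cL(K)$ under this law, which gives the lemma.

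The main point needing care is the bijection and root-identification step: making sure the graph with wired boundary matches $\Z^2$ with the roots $\Z^2\setminus[-N,N]^2$, including multiplicities of edges to the sink. This is routine, since each lattice edge leaving the box corresponds to one sink edge.
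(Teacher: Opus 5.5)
Your proof is correct, but it handles the conditioning by a different route from the paper.

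\emph{The paper's route.} The paper generates $T_N$ by Wilson's algorithm with root $s_N$, enumerating the vertices of $S$ first. The branches from $S$ are then exactly $\operatorname{MinSt}(T_N,S)$ together with the path from the common ancestor to $s_N$. Since the later walks are independent of the earlier ones, given these branches they just run Wilson's algorithm with the branch vertices as roots, and \eqref{pre:wilson} gives the conditional law at once.

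\emph{Your route.} You prove the spatial Markov property directly. The event $\{\operatorname{MinSt}(T_N,S)=K,\ \text{path}=\pi\}$ is the event that $T_N$ contains the tree $F$ formed by the edges of $K$ and $\pi$. Deleting $F$ is a bijection from spanning trees containing $F$ onto rooted spanning forests with roots the vertex set of $F$, so the conditional law is uniform on those forests. You then need \eqref{pre:wilson} only at the end, to identify the joined set of this uniform forest with $\nu_{K,\Gamma_N}$.

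\emph{Comparison.} Your argument is more elementary and checks explicitly that containing $F$ already forces $\operatorname{MinSt}=K$ with top $x$. The paper's argument is shorter and introduces the explore-by-Wilson's-algorithm viewpoint that Section~\ref{ct:section} reuses scale by scale.

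One small slip: ``forest-free edge set'' should read ``acyclic edge set.''
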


\begin{proof}
By~\eqref{pre:wilson}, we may generate~$T_N$ by Wilson's algorithm on~$[-N,N]^2$ with root~$s_N$, enumerating the vertices of~$S$ first. The branches from~$S$ form~$\operatorname{MinSt}(T_N,S)$ and the path in~$T_N$ from the common ancestor of~$S$ to~$s_N$. Given these branches, the later walks carry out Wilson's algorithm with the vertices of these branches as roots. Suppose that the common ancestor is not~$s_N$. Then~$\Gamma_N$ avoids the past of the common ancestor, which contains~$\operatorname{MinSt}(T_N,S)$. By~\eqref{pre:wilson}, with~$s_N$ identified with~$\Z^2\setminus[-N,N]^2$, which lies in~$\Gamma_N$, the conditional law of the set of vertices of~$[-N,N]^2$ joined to~$\Gamma_N$ given these branches is its law under~$\nu_{\operatorname{MinSt}(T_N,S),\Gamma_N}$. A vertex is joined to~$\operatorname{MinSt}(T_N,S)$ exactly when the oriented path from the vertex passes through the common ancestor, so~$\cL_N=\cL(\operatorname{MinSt}(T_N,S))$.
\end{proof}

The proof of Theorem~\ref{th:main} combines these results.

\begin{proof}[Proof of Theorem~\ref{th:main} assuming Propositions~\ref{sp:reciprocal-reduction} and~\ref{ct:reciprocal}]
Let~$N\geq r$, and suppose that the common ancestor of~$S$ in~$T_N$ is not~$s_N$. Then~$K=\operatorname{MinSt}(T_N,S)$ and~$\gamma=\Gamma_N$ satisfy the assumptions of Proposition~\ref{ct:reciprocal}, with~$\Gamma_N$ disjoint from~$K$ by Lemma~\ref{rd:conditional-law}, and~$\diam_\infty K\geq r$ since~$K$ contains~$0$ and~$(r,0)$. Since also~$\cL_N^{-1}=0$ when the common ancestor is~$s_N$, Lemma~\ref{rd:conditional-law} and Proposition~\ref{ct:reciprocal} give~$\EE[\cL_N^{-1}]\leq C(\log(r+2))^{-a}$ for every~$N\geq r$. For an integer~$M\geq64$, taking~$r=\lfloor M^{\nf16}\rfloor$ in~\eqref{sp:reduction-bound} gives
\begin{equation*}
\mu(\rho>M)\leq C(\log(r+2))^{-a}+Cr(M-r)^{-\nf13}\leq C(\log(M+2))^{-a}\,.
\end{equation*}
Increasing~$C$ extends this bound to all real~$M\geq0$, and letting~$M$ tend to infinity shows that~$\eta+\delta_0$ stabilizes for $\mu$-almost every~$\eta$.
\end{proof}

\section{From avalanches to spanning trees}\label{sp:section}

In this section we prove Proposition~\ref{sp:reciprocal-reduction}. Fix an integer~$M\geq r+2$, with~$r$ and~$S$ as in Section~\ref{rd:section}. We first work in a box~$[-N,N]^2$ with~$N>M+1$, under the uniform measure~$\mu_N$ on recurrent sandpiles of Section~\ref{ssec:prelim-sandpiles}, and for a recurrent sandpile~$\eta'$ on~$[-N,N]^2$ we write~$\rho_N$ for the radius of the avalanche of~$\eta'+\delta_0$ on~$[-N,N]^2$. Section~\ref{ssec:waves} bounds~$\mu_N(\rho_N>M)$ by the numbers of two kinds of waves: a wave that contains~$S$ followed by a wave that misses a vertex of~$S$, and a wave that misses a vertex of~$S$ and is not contained in~$[-M,M]^2$. Section~\ref{ssec:waves-to-trees} bounds the second number by the probabilities that the pasts in~$T_N$ of~$r+4$ vertices of~$[-r,r]^2$ leave~$[-M,M]^2$, and the first by~$\EE[\cL_N^{-1}]$. Section~\ref{sec:limits} lets~$N\to\infty$.

\subsection{Waves}\label{ssec:waves}

Let~$\mathcal{R}^{(0)}_N$ be the set of recurrent sandpiles on~$[-N,N]^2$ with~$0$ as an additional sink. We extend each~$\eta\in\mathcal{R}^{(0)}_N$ by~$\eta(0)\coloneqq-1$ to a sandpile on~$[-N,N]^2$, in which the origin is not a sink. By~\eqref{pre:burning}, the extended~$\eta$ has no forbidden subconfiguration avoiding~$0$, and~$\{0\}$ is a forbidden subconfiguration. Let~$W(\eta)$ be the union of all forbidden subconfigurations of the extended~$\eta$. A union of forbidden subconfigurations is a forbidden subconfiguration, so~$W(\eta)$ is the largest one and contains~$0$.

As in~\citet[equations~(7.1) and~(7.3)]{JaraiRedig2008}, let~$U$ act on~$\mathcal{R}^{(0)}_N$ by adding one grain to each neighbor of~$0$ and stabilizing with both~$0$ and~$s_N$ as sinks. By~\eqref{pre:permutation}, the operator~$U$ is a permutation of~$\mathcal{R}^{(0)}_N$. The next lemma makes precise an observation of~\citet*[Section~2]{IvashkevichKtitarevPriezzhev1994}.

\begin{lemma}\label{fv:inverse-waves}
Let~$\eta\in \mathcal{R}^{(0)}_N$. Starting from the sandpile with height four at the origin and~$\eta$ elsewhere, topple the origin once and then stabilize the other vertices with~$0$ as a sink. Then exactly the vertices of~$W(U\eta)$ topple, each once.
\end{lemma}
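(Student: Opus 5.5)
The plan is to compare the final sandpile of the wave directly with the extended~$U\eta$, using only the abelian property, the characterization~\eqref{pre:burning} of recurrence by forbidden subconfigurations, and the definition of~$W$ as the largest forbidden subconfiguration. Note first that toppling the origin once sends one grain to each neighbor of~$0$. Once the origin is treated as a sink, the rest of the procedure is exactly the definition of~$U$. So by the abelian property the final sandpile off the origin is~$U\eta$. Let~$V$ be the set consisting of~$0$ together with the non-sink vertices that topple during the stabilization.

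\textbf{Step 1: each vertex topples at most once.} I will compare with adding the full burning configuration, that is, one grain at each vertex~$x$ for every edge from~$x$ to~$\{0,s_N\}$. It is standard (Dhar) that for a recurrent~$\eta$ on the graph with sinks~$0$ and~$s_N$, adding this configuration makes every non-sink vertex topple exactly once and returns~$\eta$. Our addition is pointwise smaller. By the monotonicity of the odometer in the added grains (least action principle), every vertex topples at most once. If I want to avoid citing this, I can instead argue by contradiction. Suppose some vertex topples twice, and take the first second toppling, at~$x$. Just before it,~$x$ has received at most one grain along each edge, so its height is at most~$\eta(x)-\deg(x)+\deg(x)<\deg(x)$. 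Then~$x$ is not unstable, a contradiction.

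\textbf{Step 2: $V\subset W(U\eta)$.} Let~$\xi$ be the extended~$U\eta$, with~$\xi(0)=-1$. A vertex~$x\in V\setminus\{0\}$ toppled exactly once and received one grain from each neighbor in~$V$, including the origin. Hence
\[
\xi(x)=\eta(x)-\deg(x)+\deg(x,V)<\deg(x,V)\,,
\]
using~$\eta(x)<\deg(x)$. Also~$\xi(0)=-1<\deg(0,V)$. So~$V$ is a forbidden subconfiguration of~$\xi$, and hence~$V\subset W(\xi)$.

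\textbf{Step 3: $W(U\eta)\subset V$.} Let~$B=W(\xi)$ and suppose that~$B\setminus V$ is nonempty. Each~$x\in B\setminus V$ did not topple, so~$\xi(x)=\eta(x)+\deg(x,V)$. Since~$B$ is forbidden,~$\xi(x)<\deg(x,B)$. Therefore
\[
\eta(x)<\deg(x,B)-\deg(x,V)\leq\deg(x,B\setminus V)\,.
\]
So~$B\setminus V$ is a nonempty forbidden subconfiguration of~$\eta$ that avoids~$0$. This contradicts~\eqref{pre:burning} for the recurrent~$\eta\in\mathcal R^{(0)}_N$ on the graph with~$0$ as an additional sink. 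Hence~$V=W(U\eta)$, and every vertex of it topples exactly once: the origin by construction, and the others by Step~1 and the definition of~$V$.

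The only delicate point is Step~1, the at-most-once claim. I expect the direct induction on the first double toppling to suffice, since it uses only stability of~$\eta$ and the fact that each neighbor has toppled at most once before that moment.
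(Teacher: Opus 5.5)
Your proof is correct and follows the paper's argument: your Steps~2 and~3 are exactly the paper's two inclusions, $\Omega\subset W(U\eta)$ via the bound $(U\eta)(x)=\eta(x)-4+\deg(x,\Omega)<\deg(x,\Omega)$, and $W(U\eta)\setminus\Omega=\emptyset$ via a forbidden subconfiguration of $\eta$ avoiding~$0$. The only difference is that you prove the at-most-once property directly by examining the first second toppling, whereas the paper cites \citet*[Section~2.2]{BhupatirajuHansonJarai2017} for it; your argument is valid because before that moment every vertex, including the origin, has toppled at most once.
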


\begin{proof}
Toppling the origin once adds one grain to each neighbor of~$0$, so stabilizing the other vertices with~$0$ as a sink then leaves~$U\eta$ away from the origin. The vertices that topple form a wave, so each vertex other than the origin topples at most once~\citep*[Section~2.2]{BhupatirajuHansonJarai2017}. Let~$\Omega$ be the set of vertices that topple, including~$0$. Every~$x\in \Omega\setminus\{0\}$ satisfies~$(U\eta)(x)=\eta(x)-4+\deg(x,\Omega)<\deg(x,\Omega)$, since~$\eta(x)\leq3$. Hence~$\Omega$ is a forbidden subconfiguration of the extended~$U\eta$, and~$\Omega\subset W(U\eta)$. Every~$x\in W(U\eta)\setminus \Omega$ satisfies
\begin{equation*}
 \eta(x)+\deg(x,\Omega)=(U\eta)(x)<\deg\bigl(x,W(U\eta)\bigr)\leq\deg\bigl(x,W(U\eta)\setminus \Omega\bigr)+\deg(x,\Omega)\,,
\end{equation*}
so~$W(U\eta)\setminus \Omega$ is empty, since otherwise this set would be a forbidden subconfiguration of~$\eta$ avoiding~$0$. Thus~$\Omega=W(U\eta)$.
\end{proof}

By~\eqref{pre:burning}, the sandpiles that are recurrent on~$[-N,N]^2$ with an extra edge from~$0$ to~$s_N$, but not on~$[-N,N]^2$, are those with height four at the origin that lie in~$\mathcal{R}^{(0)}_N$ away from the origin. Hence, by~\citet[Definition~1 and Lemma~2.7]{BhupatirajuHansonJarai2017}, for every recurrent sandpile~$\eta'$ on~$[-N,N]^2$ with~$\eta'(0)=3$, the sandpile just before each wave of the avalanche of~$\eta'+\delta_0$ has height four at the origin and equals some~$\eta\in \mathcal{R}^{(0)}_N$ away from the origin. By Lemma~\ref{fv:inverse-waves}, this wave topples exactly~$W(U\eta)$, and the preceding wave, if there is one, leaves~$\eta$ away from the origin and therefore topples exactly~$W(\eta)$. Every~$\eta\in \mathcal{R}^{(0)}_N$ arises in this way from exactly one wave of exactly one of these avalanches.

Let~$\eta'$ be a recurrent sandpile on~$[-N,N]^2$ such that the avalanche of~$\eta'+\delta_0$ has radius~$\rho_N>M$. Then~$\eta'(0)=3$, since otherwise no vertex topples. The last wave misses a neighbor of~$0$~\citep[Corollary~2.8]{BhupatirajuHansonJarai2017} and therefore does not contain~$S$. Since~$\rho_N>M$, some wave is not contained in~$[-M,M]^2$. Hence either some wave neither contains~$S$ nor lies in~$[-M,M]^2$, or a wave not containing~$S$ immediately follows a wave containing~$S$. Let~$\eta$ be the sandpile away from the origin just before such a wave in the first case, or just before the later wave in the second case. Then~$S\not\subset W(U\eta)$, and either~$W(U\eta)\not\subset [-M,M]^2$ or~$S\subset W(\eta)$. By the previous paragraph, distinct~$\eta'$ give distinct~$\eta$. Since~$\mu_N$ gives mass~$\lvert\operatorname{SpanT}([-N,N]^2)\rvert^{-1}$ to each recurrent sandpile by~\eqref{pre:count}, and since~$U$ permutes~$\mathcal{R}^{(0)}_N$, we obtain
\begin{equation}\label{fv:excursion-union}
\begin{split}
\mu_N(\rho_N>M)&\leq\lvert\operatorname{SpanT}([-N,N]^2)\rvert^{-1}\bigl|\{\eta\in \mathcal{R}^{(0)}_N:S\subset W(\eta),\ S\not\subset W(U\eta)\}\bigr|\\
&\quad+\lvert\operatorname{SpanT}([-N,N]^2)\rvert^{-1}\bigl|\{\eta\in \mathcal{R}^{(0)}_N:S\not\subset W(\eta),\ W(\eta)\not\subset [-M,M]^2\}\bigr|\,.
\end{split}
\end{equation}

\subsection{From waves to spanning trees}\label{ssec:waves-to-trees}

We bound the two terms of~\eqref{fv:excursion-union} by spanning-tree counts. Fix a set~$\Omega$ with~$0\in \Omega\subset [-N,N]^2$, and let~$\Omega^c$ be the complement of~$\Omega$ in the vertex set of the graph~$[-N,N]^2$, so that~$s_N\in \Omega^c$. We also write~$\Omega$ and~$\Omega^c$ for the induced subgraphs of~$[-N,N]^2$, with sink~$0$ for~$\Omega$ and sink~$s_N$ for~$\Omega^c$.

As in~\citet[proof of Theorem~3]{JaraiWerning2014}, restricting~$\eta$ to~$\Omega\setminus\{0\}$ and taking~$\eta-\deg(\cdot,\Omega)$ on~$[-N,N]^2\setminus \Omega$ maps the sandpiles~$\eta\in \mathcal{R}^{(0)}_N$ with~$W(\eta)=\Omega$ injectively to pairs of sandpiles on~$\Omega$ and on~$\Omega^c$ that are stable and have no forbidden subconfiguration, because~$\eta$ has no forbidden subconfiguration avoiding the origin, $\Omega$ is the largest forbidden subconfiguration of~$\eta$, and~$\eta\leq3$.

By~\eqref{pre:burning} and~\eqref{pre:count}, on each of the graphs~$\Omega$ and~$\Omega^c$ the number of stable sandpiles with no forbidden subconfiguration equals the number of spanning trees, and both numbers vanish when the graph is disconnected. Counting the pairs therefore gives
\begin{equation}\label{fv:shape-mass}
 \bigl|\{\eta\in \mathcal{R}^{(0)}_N:W(\eta)=\Omega\}\bigr|\leq\lvert\operatorname{SpanT}(\Omega)\rvert\cdot\lvert\operatorname{SpanT}(\Omega^c)\rvert\,,
\end{equation}
and~$\lvert\operatorname{SpanT}(\Omega)\rvert\cdot\lvert\operatorname{SpanT}(\Omega^c)\rvert$ is the number of spanning forests of~$[-N,N]^2$ with two components, one with vertex set~$\Omega$ and the other containing~$s_N$.

Let~$x$ and~$y$ be adjacent vertices of~$[-N,N]^2$. Adding the edge~$xy$ maps the spanning forests of~$[-N,N]^2$ with two components, one containing~$0$ and~$x$ and the other containing~$s_N$ and~$y$, injectively to spanning trees of~$[-N,N]^2$ in which~$xy$ is the outgoing edge of~$x$ and the past of~$x$ is the component of~$0$. Every set that contains~$0$ and misses a vertex of~$S$ contains a vertex~$x$ of the union of~$S$ and the segment from~$0$ to~$(r,0)$ that has a neighbor~$y$ in this union outside the set. Summing~\eqref{fv:shape-mass} over the sets~$\Omega$ that contain~$0$, miss a vertex of~$S$, and are not contained in~$[-M,M]^2$ therefore gives
\begin{equation}\label{fv:large-omission}
 \bigl|\{\eta\in \mathcal{R}^{(0)}_N:S\not\subset W(\eta),\ W(\eta)\not\subset [-M,M]^2\}\bigr|
 \leq\lvert\operatorname{SpanT}([-N,N]^2)\rvert\sum_x\PP\bigl(\operatorname{Past}_{T_N}(x)\not\subset [-M,M]^2\bigr)\,,
\end{equation}
where~$x$ runs over the~$r+4$ vertices of the union of~$S$ and the segment from~$0$ to~$(r,0)$.

The proof of the next lemma makes precise an observation of~\citet[page~2095]{PriezzhevKtitarevIvashkevich1996} and~\citet[page~2]{PaczuskiBoettcher1997}: each wave repeats the topplings of the preceding wave up to the boundary of the preceding wave.

\begin{lemma}\label{fv:outflow}
We have
\begin{equation*}
 \bigl|\{\eta\in \mathcal{R}^{(0)}_N:S\subset W(\eta),\ S\not\subset W(U\eta)\}\bigr|\leq\lvert\operatorname{SpanT}([-N,N]^2)\rvert\,\EE\bigl[\cL_N^{-1}\bigr]\,.
\end{equation*}
\end{lemma}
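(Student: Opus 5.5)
We will prove the bound by an injection into spanning trees weighted by~$1/\cL$. Fix~$\eta\in\mathcal{R}^{(0)}_N$ with~$\Omega\coloneqq W(\eta)\supset S$ and~$S\not\subset W(U\eta)$. The first step is to encode~$\eta$ by a pair of trees~$(t_1,t_2)$. Here~$t_1$ is a spanning tree of~$\Omega$ oriented toward~$0$, and~$t_2$ is a spanning tree of~$\Omega^c$ oriented toward~$s_N$. We obtain them from the two sandpiles of the injection in the proof of~\eqref{fv:shape-mass}, using the burning bijection on each graph. We do not use an arbitrary burning bijection. On~$\Omega$ we choose the one where each vertex points to a neighbor that burned at the previous step, with a fixed tie-breaking rule. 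With this choice the burning order in~$\Omega$ (from sink~$0$) is compatible with~$t_1$.

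The second step is to locate where the next wave can stop. By Lemma~\ref{fv:inverse-waves}, the wave started from the sandpile with height four at~$0$ and~$\eta$ elsewhere topples exactly~$W(U\eta)$. We show that this wave topples the vertices of~$\Omega$ in the burning order of~$\eta|_\Omega$ from~$0$, as far as possible. The reason is that a vertex~$x\in\Omega$ receives one grain from each toppled neighbor. When the wave reaches~$x$, the grains received plus~$\eta(x)$ reach~$4$ if and only if~$\eta(x)\ge \deg(x,\text{unburned})+(4-\deg(x,\Omega))$ in the appropriate sense. This can fail only because of edges from~$x$ to~$\Omega^c$ whose endpoints have not toppled.

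Hence, if some~$s\in S$ is missed, the~$t_1$-path from~$s$ to~$0$ contains a vertex~$x$ with a lattice neighbor~$y\in\Omega^c$. So at least one edge~$xy$ runs from~$\operatorname{MinSt}(t_1,S)$ to~$\Omega^c$. These are the red edges. Orienting~$t_1$ toward~$0$ makes~$\operatorname{MinSt}(t_1,S)$ the union of the paths from~$S$ to~$0$. This is the set that will match~$\operatorname{MinSt}(t,S)$ below.

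The third step is the joining. For each red edge~$xy$, we reverse the orientation of~$t_1$ along the path from~$x$ to~$0$ and add~$xy$. This gives a spanning tree~$t$ of~$[-N,N]^2$ oriented toward~$s_N$. We check three facts about~$t$:
\begin{itemize}
\item the past of~$x$ in~$t$ is~$\Omega$;
\item the common ancestor of~$S$ in~$t$ is determined within~$\Omega$;
\item $\operatorname{MinSt}(t,S)$ equals the vertex set of~$\operatorname{MinSt}(t_1,S)$ together with possibly the path to~$x$.
\end{itemize}
This is where the choice of~$x$ matters. We take~$x$ to be the common ancestor, rerooting at the first vertex where the paths from~$S$ meet. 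The cleaner choice is therefore to root~$t_1$ at the common ancestor from the start. With that choice, the lattice edges from~$\operatorname{MinSt}(t,S)$ to vertices outside the past of the common ancestor are exactly the red edges, so~$\cL_t$ equals the number~$L$ of red edges.

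Each pair~$(t_1,t_2)$ then yields~$L=\cL_t$ distinct trees. Conversely, a tree~$t$ together with its common ancestor determines~$\Omega$ as the past, and determines~$t_2$ and, after undoing the rerooting, $t_1$. So the map from (pair, red edge) to trees is injective. Summing gives
\[
\#\{\eta\}\le\sum_{(t_1,t_2)}1=\sum_{(t_1,t_2)}\sum_{\text{red }e}\frac1L\le\sum_{t}\cL_t^{-1}=|\operatorname{SpanT}([-N,N]^2)|\,\EE[\cL_N^{-1}]\,.
\]

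The main obstacle is the second step. We must show rigorously that the next wave stops only next to~$\Omega^c$ along tree paths. For this the choice of bijection between~$\eta|_\Omega$ and~$t_1$ has to be compatible with both the orientation used in the joining and with the toppling dynamics.

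I would first try the following. Assume~$s\in S\setminus W(U\eta)$ and let~$\Omega'=W(U\eta)$. Take the~$t_1$-path from~$s$ toward the root and the last vertex~$z$ on it outside~$\Omega'$, whose successor~$z'$ lies in~$\Omega'$. Using that~$\Omega$ is forbidden for~$\eta$ and that~$z$ did not topple, we get~$\eta(z)+\deg(z,\Omega')<4$. Comparing with the burning condition~$\eta(z)\ge\deg(z,\text{unburned at its step})$, together with~$\deg(z,\Omega)\le 4$, forces~$\deg(z,\Omega^c)\ge1$. That gives the required red edge at~$z$.
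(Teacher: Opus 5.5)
Your encoding of $\eta$ by a pair $(t_1,t_2)$ and your joining step match the paper: you join along each of the $\cL_{t_1}$ red edges, weight by $\cL_{t_1}^{-1}$, and recover $\Omega$, $t_1$, $t_2$ and the joining edge by deleting the outgoing edge of the common ancestor of $S$. No rerooting is needed there. Since $0\in S$, $\operatorname{MinSt}(t_1,S)$ is the minimal subtree of $t_1$ spanning $S$, and joining along a red edge at any $x\in\operatorname{MinSt}(t_1,S)$ makes $x$ the common ancestor with the same minimal subtree.

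The gap is Step~2, which you flag yourself. You need every counted $\eta$ to be sent to a tree $t_1$ with $\cL_{t_1}>0$, and your argument does not show this. Let $z$ burn at step $k$, and let $B_{<k}$ be the set of vertices burned before step $k$. The burning rule gives $\eta(z)\geq\deg(z,\Omega)-\deg(z,B_{<k})$. Combined with $\eta(z)+\deg(z,W(U\eta))<4$, this forces $\deg(z,\Omega)<4$ only if every neighbor of $z$ in $B_{<k}$ lies in $W(U\eta)$. You know this only for the parent $z'$. Another neighbor $w$ burned at step $k-1$ can be stuck, for instance because $w$ or a vertex on its own tree path is adjacent to $\Omega^c$, while lying off $\operatorname{MinSt}(t_1,S)$. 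In the Majumdar--Dhar bijection, $\eta(z)$ selects the parent among the neighbors burned at step $k-1$. For $z$ with four neighbors in $\Omega$, a tie-breaking rule fixed in advance may assign $z'$ to the smallest admissible value $4-\deg(z,B_{<k})$. Then $z$ need not topple, although no vertex of its tree path is adjacent to $\Omega^c$. The same oversight is in your sentence that toppling can fail only because of edges to $\Omega^c$. So the pointwise implication is unproved, and for a tie-breaking rule that ignores the dynamics there is no reason for it to hold.

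The paper avoids needing any bijection compatible with the dynamics. It burns $\eta|_{\Omega\setminus\{0\}}$ from $0$ while \emph{holding} every vertex of $\Omega$ that has a neighbor in $\Omega^c$. A burned vertex other than $0$ is then not held, so it has four neighbors in $\Omega$. Your computation, applied by induction on the step, then shows that the burned vertices topple legally in order after the origin. Hence $S\not\subset W(U\eta)$ forces the final unburned set to meet $S$. Now combine Lemma~\ref{fv:held-burning} with $|\{\text{recurrent sandpiles on }\Omega\}|=|\operatorname{SpanT}(\Omega)|$. The number of recurrent sandpiles on $\Omega$ whose final unburned set meets $S$ equals the number of spanning trees $t$ of $\Omega$ toward $0$ in which the union of the pasts of the held vertices meets $S$. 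That happens exactly when $\operatorname{MinSt}(t,S)$ contains a held vertex, i.e.\ when $\cL_t>0$. This equality of counts, rather than a pointwise map from $\eta|_\Omega$ to trees, is what feeds your Step~3 and proves the lemma.
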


\begin{proof}
Fix a set~$\Omega$ with~$S\subset \Omega\subset [-N,N]^2$, and let~$\eta\in \mathcal{R}^{(0)}_N$ satisfy~$W(\eta)=\Omega$. As in Section~\ref{ssec:prelim-sandpiles}, burn the restriction of~$\eta$ to~$\Omega\setminus\{0\}$, as a sandpile on~$\Omega$ with sink~$0$, while holding unburned every vertex of~$\Omega$ with a neighbor in~$\Omega^c$. The four neighbors of the origin lie in~$S\subset \Omega$, so the origin has no neighbor in~$\Omega^c$ and is not held. A burned vertex~$x\neq0$ is not held, so its four neighbors lie in~$\Omega$, and by the burning rule~$x$ has at least four grains once its neighbors burned at earlier steps have toppled. Hence, as in~\citet*[Section~7]{JaraiRedig2008}, after the origin topples in Lemma~\ref{fv:inverse-waves}, the other burned vertices can topple legally in the order of their steps. By the abelian property, these topplings are part of the stabilization in Lemma~\ref{fv:inverse-waves}, so every vertex of~$\Omega$ outside the final unburned set lies in~$W(U\eta)$.

The final unburned set depends only on the restriction of~$\eta$ to~$\Omega\setminus\{0\}$, which is a recurrent sandpile on~$\Omega$ by the injection above and~\eqref{pre:burning}. By Lemma~\ref{fv:held-burning} for~$\Omega$ with sink~$0$, the final unburned set of a uniform recurrent sandpile on~$\Omega$ has the law of the union of the pasts of the held vertices in a uniform spanning tree of~$\Omega$ oriented toward~$0$. In a spanning tree~$t$ of~$\Omega$ oriented toward~$0$, the count~$\cL_t$ of Section~\ref{ssec:prelim-trees} is the number of edges of~$[-N,N]^2$ from~$\operatorname{MinSt}(t,S)$ to~$\Omega^c$. Since~$\operatorname{MinSt}(t,S)$ is the union of the oriented paths in~$t$ from the vertices of~$S$ to~$0$, the union of the pasts of the held vertices in~$t$ meets~$S$ exactly when~$\cL_t>0$. Hence, by~\eqref{pre:count}, the number of recurrent sandpiles on~$\Omega$ whose final unburned set meets~$S$ is the number of spanning trees~$t$ of~$\Omega$ oriented toward~$0$ with~$\cL_t>0$. If~$S\not\subset W(U\eta)$, then the final unburned set meets~$S$, because every vertex of~$\Omega$ outside the final unburned set lies in~$W(U\eta)$. The injection into pairs of sandpiles on~$\Omega$ and on~$\Omega^c$ therefore gives
\begin{equation}\label{fv:shape-outflow}
 \bigl|\{\eta\in \mathcal{R}^{(0)}_N:W(\eta)=\Omega,\ S\not\subset W(U\eta)\}\bigr|\leq\lvert\operatorname{SpanT}(\Omega^c)\rvert\cdot\bigl|\{t:\cL_t>0\}\bigr|\,,
\end{equation}
where~$t$ runs over the spanning trees of~$\Omega$ oriented toward~$0$.

Fix a spanning tree~$t$ of~$\Omega$ oriented toward~$0$ with~$\cL_t>0$, and, as in Figure~\ref{fig:joining} and in~\citet[proof of Lemma~5]{LevinePeres2014}, join~$t$ to a spanning tree of~$\Omega^c$ by one of the~$\cL_t$ edges of~$[-N,N]^2$ from a vertex~$x\in \operatorname{MinSt}(t,S)$ to~$\Omega^c$. In the joined tree, oriented toward~$s_N$, the common ancestor of~$S$ is~$x$, the outgoing edge of~$x$ is the edge used to join~$t$, the past of~$x$ is~$\Omega$, and the minimal subtree containing~$S$ is~$\operatorname{MinSt}(t,S)$. So the joined tree has count~$\cL_t$, and weighting each joined tree by~$\cL_t^{-1}$ counts each pair of trees on~$\Omega$ and~$\Omega^c$ once. Deleting the outgoing edge of the common ancestor of~$S$ recovers~$\Omega$, the tree~$t$, the edge used to join~$t$, and the spanning tree of~$\Omega^c$, so each spanning tree of~$[-N,N]^2$ arises at most once. Summing~\eqref{fv:shape-outflow} over the sets~$\Omega$ with~$S\subset \Omega\subset [-N,N]^2$, we obtain
\begin{equation*}
\begin{split}
 \bigl|\{\eta\in \mathcal{R}^{(0)}_N:S\subset W(\eta),\ S\not\subset W(U\eta)\}\bigr|&\leq\sum_{S\subset \Omega\subset [-N,N]^2}\lvert\operatorname{SpanT}(\Omega^c)\rvert\cdot\bigl|\{t:\cL_t>0\}\bigr|\\
 &\leq\lvert\operatorname{SpanT}([-N,N]^2)\rvert\,\EE\bigl[\cL_N^{-1}\bigr]\,.
\end{split}
\end{equation*}
\end{proof}

\subsection{Passing to the infinite lattice}\label{sec:limits}

To let~$N\to\infty$, we bound the tail of the past of a vertex in~$T$.

\begin{lemma}\label{rd:past-tail}
There is a universal constant~$C<\infty$ such that~$\PP\bigl(\operatorname{Past}_T(x)\not\subset [-M,M]^2\bigr)\leq C(M-r)^{-\nf13}$ for every~$x\in [-r,r]^2$.
\end{lemma}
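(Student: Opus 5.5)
Since $x\in[-r,r]^2$, the box $x+[-(M-r),M-r]^2$ lies inside $[-M,M]^2$, so by translation invariance of the uniform spanning tree~$T$ of~$\Z^2$ it suffices to show that $\PP(\operatorname{Past}_T(0)\not\subset[-n,n]^2)\leq Cn^{-\nf13}$ for every integer $n\geq1$, and then take $n=M-r\geq2$. This is the standard bound on the radius of the past of a vertex in the two-dimensional uniform spanning tree. The plan is to derive it from the bound of \citet{BarlowMasson2011} on paths in the uniform spanning tree of~$\Z^2$, as announced in the proof overview, via the following geometric reduction.

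\textbf{Step 1 (reduction to a path estimate).} If $\operatorname{Past}_T(0)\not\subset[-n,n]^2$, then some vertex~$y$ with $|y|_\infty>n$ has its oriented path through~$0$. Following that path backward from~$0$ toward~$y$, it crosses $\partial[-m,m]^2$ for each $m\leq n$. In particular, for $m=\lfloor n/2\rfloor$ there is a vertex $z\in\partial[-m,m]^2$ whose oriented path passes through~$0$. Hence the tree path from $z$ to the end of~$T$ first reaches distance at least~$m$ from $z$ only after returning to $0$. Equivalently, the branch of~$T$ from the outer annulus reaches the origin before leaving $[-2m,2m]^2$ and then exits.

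\textbf{Step 2 (Wilson's algorithm rooted at infinity).} Generate~$T$ by Wilson's algorithm started from~$0$. The path from~$0$ to infinity is a loop-erased random walk~$\gamma$ from~$0$. On the bad event, some branch of the past joins $\gamma$ exactly at~$0$ from distance~$\geq m$, which means the portion of the tree near~$0$ through which far points attach does not go directly outward. The cleanest route is the result of \citet{BarlowMasson2011} that the intrinsic (tree) distance from~$0$ to $\partial[-m,m]^2$, and dually the diameter of the past, satisfy a polynomial tail. The form I will use is this: the probability that the past of~$0$ has Euclidean diameter at least~$n$ is at most $Cn^{-\alpha}$, with the explicit exponent $\alpha=\nf13$ obtainable from their estimates combined with Beurling. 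Concretely, I would use the estimate in the following form. Let $A$ be a vertex of $\partial[-m,m]^2$. The probability that the walk from $A$, run until it hits~$\gamma$, hits $\gamma$ at $0$ is bounded by the probability of hitting a point of a path that stretches from $0$ to distance $m$; by the discrete Beurling estimate \citep{Kesten1987} this is at most $C m^{-1/2}$ times a local factor. Summing over the order-$m$ boundary points is too crude, so instead I would sum over dyadic scales, using the \citet{BarlowMasson2011} bound that with probability $1-Cm^{-\nf13}$ the tree $T$ restricted to the ball is ``nice'' (all points in the annulus connect to $\gamma$ outside a small ball around~$0$).

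\textbf{Step 3.} This gives $\PP(\operatorname{Past}_T(0)\not\subset[-n,n]^2)\leq Cn^{-\nf13}$, and hence the lemma with $n=M-r$.

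The main obstacle is Step 2. One has to identify precisely which of Barlow--Masson's statements yields the exponent $\nf13$ uniformly, and verify that the one-ended past of~$T$ on~$\Z^2$, rather than that of $T_N$, is the object they control. I would first try to quote their result that the intrinsic ball estimates imply $\PP(\diam\operatorname{Past}_T(0)\geq n)\leq Cn^{-\nf13}$ directly, and only fall back on the Beurling-plus-Wilson argument if that form is unavailable.
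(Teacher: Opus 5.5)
\textbf{There is a genuine gap: Step~2 never becomes an argument.} Your reduction to a single vertex by translation invariance is harmless. But in Step~2 you list two possible routes and carry out neither:
\begin{itemize}
\item quoting an unspecified statement of \citet{BarlowMasson2011} about the diameter of the past, or
\item a Beurling bound summed over dyadic scales on an undefined ``nice'' event.
\end{itemize}
You note yourself that the single-walk Beurling bound summed over boundary points is too crude. That bound also misses the main difficulty of a direct approach. In Wilson's algorithm, a far vertex can enter $\operatorname{Past}_T(0)$ by hitting a later branch that attaches at $0$. It need not be a walk that first hits the path from $0$ exactly at $0$. Moreover, the intrinsic-distance estimates you mention are not what controls the Euclidean size of the past. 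Step~1 is also imprecise: the ``equivalently'' sentence does not follow from the existence of $z\in\partial[-m,m]^2$ whose path passes through $0$, though nothing later uses it.

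\textbf{The missing idea is planar duality, which makes the lemma a direct consequence of a path estimate.}
\begin{itemize}
\item The duals of the lattice edges absent from $T$ form a uniform spanning tree of the dual lattice.
\item Deleting the outgoing edge of $x$ splits $T$ into $\operatorname{Past}_T(x)$ and its complement, both connected. So the duals of the lattice edges between them form a simple cycle. This cycle is the dual of the outgoing edge together with the dual-tree path between its two endpoints, which are adjacent dual vertices within distance $1$ of $x$.
\item The cycle encloses the finite set $\operatorname{Past}_T(x)$. Suppose the past contains a vertex $y$ with $|y|_\infty>M$, hence $|y-x|_\infty>M-r$. Then the dual-tree path reaches Euclidean distance greater than $M-r$ from each endpoint: follow the ray from an endpoint through $y$ until it meets the cycle beyond $y$.
\item \citet[Lemma~2.6]{BarlowMasson2011} bounds the probability that the dual-tree path between two adjacent dual vertices leaves the closed Euclidean ball of radius $M-r$ about one of them by $C(M-r)^{-\nf13}$.
\item A union bound over the four possible outgoing edges of $x$ proves the lemma.
\end{itemize}
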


\begin{proof}
By~\citet[Proposition~5.6 and Theorem~12.2]{BLPS2001}, the duals of the lattice edges absent from~$T$ form a uniform spanning tree of the dual lattice. Since~$\operatorname{Past}_T(x)$ and its complement are connected, the duals of the edges between them form a simple cycle, which consists of the dual of the outgoing edge of~$x$ and the path in the dual tree between the endpoints of this dual edge. If~$\operatorname{Past}_T(x)\not\subset [-M,M]^2$, then~$\operatorname{Past}_T(x)$ contains a vertex at~$|\cdot|_\infty$-distance greater than~$M-r$ from~$x$, and since the cycle surrounds~$\operatorname{Past}_T(x)$, this path reaches Euclidean distance greater than~$M-r$ from both endpoints. \citet[Lemma~2.6]{BarlowMasson2011} bounded the probability that the path in the dual tree between two adjacent dual vertices leaves the closed Euclidean ball of radius~$M-r$ about one of them by~$C(M-r)^{-\nf13}$. Summing this bound over the four edges at~$x$ proves the lemma.
\end{proof}

The proof of Proposition~\ref{sp:reciprocal-reduction} combines Lemma~\ref{rd:past-tail} with the bounds of Sections~\ref{ssec:waves} and~\ref{ssec:waves-to-trees}.

\begin{proof}[Proof of Proposition~\ref{sp:reciprocal-reduction}]
Since~$\{\rho\leq M\}$ on~$\Z^2$ and~$\{\rho_N\leq M\}$ on~$[-N,N]^2$ are determined by the heights in~$[-M-1,M+1]^2$ in the same way, \eqref{pre:sandpile-limit} gives
\begin{equation*}
\mu(\rho>M)=\lim_{N\to\infty}\mu_N(\rho_N>M)\,.
\end{equation*}
For~$N>M+1$, the bound~\eqref{fv:excursion-union}, Lemma~\ref{fv:outflow}, and the bound~\eqref{fv:large-omission} give
\begin{equation*}
\mu_N(\rho_N>M)\leq\EE\bigl[\cL_N^{-1}\bigr]+\sum_x\PP\bigl(\operatorname{Past}_{T_N}(x)\not\subset [-M,M]^2\bigr)\,,
\end{equation*}
with~$x$ as in~\eqref{fv:large-omission}. By~\eqref{pre:past-limit} and Lemma~\ref{rd:past-tail}, each of the~$r+4\leq3r$ terms of the sum has limit at most~$C(M-r)^{-\nf13}$ as~$N\to\infty$, so taking the limit superior of the previous display proves~\eqref{sp:reduction-bound}.
\end{proof}

\section{A lower bound at one scale}\label{fo:section}

In this section we prove Proposition~\ref{fo:local-comparison} below by reducing it to a variance bound in a finite planar network, Proposition~\ref{el:reference-moment}, which we prove in Section~\ref{sec:electrical}.

\begin{proposition}\label{fo:local-comparison}
There are universal constants~$\varepsilon>0$ and~$\ell_*<\infty$ such that the following holds. Let~$\ell\geq\ell_*$ be an integer, and let~$A$ and~$\Gamma$ be disjoint sets of vertices of~$\Z^2$ whose union has finite complement, such that~$0\in A$ and~$\Gamma$ contains a neighbor of~$0$. Assume that~$A$ is connected and meets~$\Z^2\setminus [-2\ell+1,2\ell-1]^2$, and that~$\Gamma$ is connected and meets~$\partial[-\ell,\ell]^2$. Then
\begin{equation*}
\nu_{A,\Gamma}\bigl(\cL(A\cap [-2\ell,2\ell]^2)\geq\tfrac18\log \ell\bigr)\geq\varepsilon\,.
\end{equation*}
\end{proposition}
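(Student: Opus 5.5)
Write $A_\ell\coloneqq A\cap[-2\ell,2\ell]^2$ and $X\coloneqq\cL(A_\ell)$. The plan is to apply the Paley--Zygmund inequality to $X$ under $\nu_{A,\Gamma}$, or to a count that $X$ stochastically dominates. We need a mean bound $\EE X\geq c\log\ell$ (with $c$ comfortably above $\tfrac18$, aiming for $\tfrac12$) and a second-moment bound $\EE X^2\leq C(\EE X)^2$. Then
\begin{equation*}
\nu_{A,\Gamma}\bigl(X\geq\tfrac18\log\ell\bigr)\geq\nu_{A,\Gamma}\bigl(X\geq\tfrac14\EE X\bigr)\geq\tfrac9{16}(\EE X)^2/\EE X^2\geq\varepsilon\,.
\end{equation*}
Here $X$ is a sum of indicators, one for each lattice edge $xy$ with $x\in A_\ell$ and $y\notin A$, of the event that $y$ is joined to $\Gamma$.

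\textbf{Mean.} By \eqref{fo:color-marginal},
\begin{equation*}
\EE X=\sum_{xy:\,x\in A_\ell}\PP_y(\text{visit }\Gamma\text{ before }A)\,.
\end{equation*}
By \eqref{pre:count-conductance} (applied to the roots $A$, $\Gamma$) this is essentially the current through $A_\ell$, that is, the part of $\mathcal C(A\leftrightarrow\Gamma)$ carried by edges at $A_\ell$. To bound it below, I would localize. The set $A$ connects $0$ to distance $2\ell$, and $\Gamma$ is adjacent to $0$ and reaches $\partial[-\ell,\ell]^2$. Hence in every annulus $[-2^{k+1},2^{k+1}]^2\setminus[-2^k,2^k]^2$ with $1\leq 2^k\leq\ell/2$, both sets cross the annulus. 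In such a crossed annulus, a Beurling-type (Kesten) estimate gives that the voltage current flowing from $A$ to $\Gamma$ inside the annulus is at least a constant. Summing over the roughly $\log_2\ell$ annuli gives $\EE X\geq c\log\ell$. To reach the constant $\tfrac12$, I would instead use a harmonic-measure lower bound from each point of $\Gamma$ near the crossing. Alternatively I would use Dirichlet's principle \eqref{pre:dirichlet} in the dual form of Thomson's principle, bounding the resistance of the region between the two crossings by $C/\log\ell$. The edges counted outside $[-2\ell,2\ell]^2$ must be discarded; monotonicity (Lemma~\ref{fo:lattice-monotonicity}) lets us replace $\Gamma$ by a smaller set or $A$ by a larger one, so we may assume the configuration is simplified inside $[-2\ell,2\ell]^2$.

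\textbf{Variance.} This is the main obstacle, since the indicators are positively correlated along $A$. First, by Lemma~\ref{fo:lattice-monotonicity}, I would enlarge $A$ to include everything outside a bounded domain around $A_\ell$. This passes to a finite simply connected planar domain $D$ in which the root sets are two boundary arcs, and $X$ stochastically dominates the new count, whose mean stays of order $\log\ell$. In $D$, planar duality identifies the interface between the vertices joined to $A$ and those joined to $\Gamma$ with a single path in the dual uniform spanning tree, running between the two points where the arcs meet. Our count then becomes the number of dual edges this path shares with the fixed boundary path along $A_\ell$. The Kenyon--Wilson boundary formulas express the probability that this path uses two given boundary edges in terms of dual Green functions and boundary Poisson kernels. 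Summing over pairs bounds $\EE X^2$ by $(\EE X)^2$ plus terms controlled by Green functions at boundary points. The hard part is a uniform bound $\sum G\lesssim\log\ell$, averaged along the boundary, valid for arbitrary domain shapes. I would attack it with the Brooks--Smith--Stone--Tutte square tiling, in Lov\'asz's form, which maps $D$ to a strip of squares. Boundary edges become unit-height slices, and Green-function sums become geometric quantities, heights and widths of squares, that are bounded by the strip length, which is of order the resistance and hence of order $\log\ell$. Combining the two bounds in Paley--Zygmund gives $\varepsilon$ once $\ell\geq\ell_*$.
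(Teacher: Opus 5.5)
Your architecture (Paley--Zygmund, a conductance lower bound for the mean, monotonicity to pass to a finite planar domain, and a variance bound from the dual path, Kenyon--Wilson and the square tiling) is the paper's. What is missing is the step that makes your two moments refer to the same random variable.

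\textbf{The mean of the reduced count is unproved.} Your mean bound is for $X$ under $\nu_{A,\Gamma}$, and it is only sketched. The annulus/Beurling and Thomson arguments give an unspecified constant $c$, while your Paley--Zygmund step needs $c\geq\tfrac12$ to reach the threshold $\tfrac18\log\ell$. In the reduced domain, the dual-path computation controls the count along the whole boundary arc of the merged root. That count includes edges from the roots you added and is not dominated by $X$. The count along $A_\ell$ is pointwise smaller, so its second moment is fine, but its mean is exactly what you leave unproved (``stays of order $\log\ell$'').

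This is where the choice of the reduced domain matters. The paper takes $P$ to be a path in $\Gamma\cap[-\ell,\ell]^2$ from a neighbor of $0$ to $\partial[-\ell,\ell]^2$, $e$ the edge from $0$ to $P$, and $D$ the component of $[-2\ell+1,2\ell-1]^2\setminus A$ containing $P$. Then $\cL_{H_0}=\cL(A)-1+\cL(D^c\setminus A)$ under $\nu_{D^c,P}$, and two estimates follow.
\begin{itemize}
\item Every edge from $D^c\setminus A$ into $D$ starts outside $[-2\ell+1,2\ell-1]^2$. So $\cL(D^c\setminus A)$ has mean at most $\mathcal C([-\ell,\ell]^2\leftrightarrow\Z^2\setminus[-2\ell+1,2\ell-1]^2)\leq12$.
\item Each cycle $\partial[-j,j]^2$ with $1\leq j\leq\ell$ avoids $e$ and meets both $A$, where $\phi=0$, and $P$, where $\phi=1$. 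By Cauchy--Schwarz it contributes at least $1/(2j)$ to the energy of $\phi$, which is $\EE[\cL_{H_0}]$. Hence $\EE[\cL_{H_0}]\geq\tfrac12\log\ell$, with no Beurling estimate.
\end{itemize}
Paley--Zygmund is then applied to $\cL(A)-1$, whose mean is at least $\EE[\cL_{H_0}]-12$ and whose second moment is at most $\EE[\cL_{H_0}^2]$. Shrinking $\Gamma$ to such a path inside $[-\ell,\ell]^2$ is essential. $\Gamma\cap D$ may be disconnected, which spoils the planar two-vertex picture. It may also run along the edge of the box, where the added roots could absorb most of the count.

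\textbf{The variance heuristic does not work.} You rightly call this the hard part, but it is left as a plan. With the strip normalized to width one, its length is $\mathcal G(v_0,v_0)=\EE[\cL_{H_0}]$, which is at least, not at most, of order $\log\ell$. The squares of the boundary edges are not unit slices either: the square of $e_k$ has side $\EE[I_k]$.

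More importantly, the quantities to which the Kenyon--Wilson computation reduces the variance are the differences $\mathcal G(v_i,v_i)-\mathcal G(v_0,v_i)$. These are not bounded by the strip length edge by edge. At the far end of a corridor of width one and length $L$ inside $D$ they are at least of order $L$, which can be of order $\ell$ while $\EE[\cL_{H_0}]$ is of order $\log\ell$. The control must come from the weights $\EE[I_i]$, which are tiny there.

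The paper proves $\mathcal G(v_i,v_i)-\mathcal G(v_0,v_i)\leq C(1+\log(2/\EE[I_j]))$ for $j\in\{i-1,i\}$, in three moves:
\begin{itemize}
\item planar duality writes $\mathcal G(v_i,v_i)$ as a conductance in the network where $\alpha$ is split between $e_{i-1}$ and $e_i$;
\item subtracting $\mathcal G(v_0,v_i)$ leaves a Dirichlet energy with weights $(1-\phi(x))(1-\phi(y))$;
\item an angular test function centered at the square of $e_j$ has energy over the strip logarithmic in the ratio of the strip width to the side of that square.
\end{itemize}
Jensen's inequality with weights $\EE[I_i]/\EE[\cL_{H_0}]$ then gives $\operatorname{Var}(\cL_{H_0})\leq C\,\EE[\cL_{H_0}](1+\log(m/\EE[\cL_{H_0}]))$. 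Only with $m\leq\ell^3$ and $\EE[\cL_{H_0}]\geq\tfrac12\log\ell$ does this become $\EE[\cL_{H_0}^2]\leq C(\EE[\cL_{H_0}])^2$.
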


\begin{figure}[htbp]
\centering
\includegraphics[width=\textwidth]{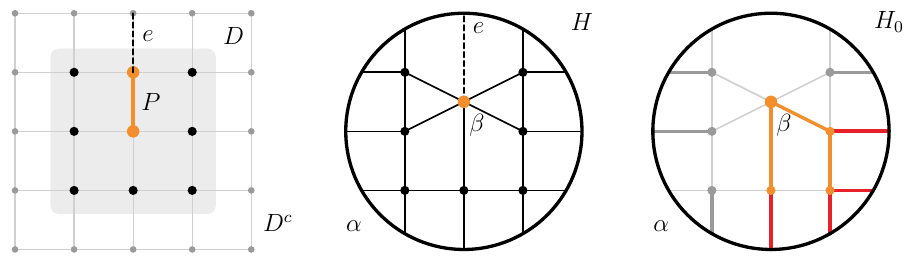}
\caption{For the sets~$D$ and~$P$ and the edge~$e$ defined before Proposition~\ref{el:reference-moment}, identifying~$D^c$ to~$\alpha$ and~$P$ to~$\beta$, and deleting the resulting loops, gives~$H$, and deleting~$e$ from~$H$ gives~$H_0$. In~$H_0$, the thick gray and orange edges form a sample of the uniform rooted spanning forest, and for this sample~$\cL_{H_0}$ is the number of red edges.}
\label{fig:network}
\end{figure}

Let~$D\subset\Z^2$ be finite and connected with connected complement~$D^c$, let~$P\subset D$ be nonempty and connected, and let~$e$ be a lattice edge from~$D^c$ to~$P$. Let~$H$ be the planar multigraph obtained by identifying~$D^c$ to a vertex~$\alpha$ and~$P$ to a vertex~$\beta$ and deleting the resulting loops, and let~$H_0$ be~$H$ with~$e$ deleted, as in Figure~\ref{fig:network}. Let~$\cL_{H_0}$ be the number of edges of~$H_0$ from~$\alpha$ to vertices joined to~$\beta$ in the uniform rooted spanning forest of~$H_0$ with roots~$\alpha$ and~$\beta$. Since the endpoints of~$e$ are the two roots, the graphs~$H$ and~$H_0$ have the same rooted spanning forests with roots~$\alpha$ and~$\beta$, and by~\eqref{pre:wilson} the vertices of~$D$ joined to~$P$ under~$\nu_{D^c,P}$ are those joined to~$\beta$ in the uniform rooted spanning forest of~$H_0$. Let~$\phi$ be the voltage between~$\beta$ and~$\alpha$ in~$H_0$, which we extend to~$\Z^2$ by zero on~$D^c$ and one on~$P$, and let~$m$ be the number of edges of~$H_0$ at~$\alpha$.

\begin{proposition}\label{el:reference-moment}
For all sets~$D$ and~$P$ and every edge~$e$ as above, the mean~$\EE[\cL_{H_0}]$ is the energy of~$\phi$ in~$H_0$, and, for a universal constant~$C<\infty$,
\begin{equation}\label{el:reference-estimate}
\operatorname{Var}(\cL_{H_0})\leq C\,\EE[\cL_{H_0}]\Bigl(1+\log\frac m{\EE[\cL_{H_0}]}\Bigr)\,.
\end{equation}
\end{proposition}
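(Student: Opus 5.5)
The mean is the easy part and I would do it first. By linearity, $\EE[\cL_{H_0}]$ is a sum over the $m$ edges $\alpha y$ of $H_0$ of the probability that $y$ is joined to $\beta$. By \eqref{fo:color-marginal} (Wilson's algorithm started at $y$), this probability is the chance that the walk from $y$ hits $\beta$ before $\alpha$, which by \eqref{pre:voltage} is $\phi(y)$. So
\begin{equation*}
\EE[\cL_{H_0}]=\sum_{\alpha y}\phi(y)=\sum_{\alpha y}\bigl(\phi(y)-\phi(\alpha)\bigr),
\end{equation*}
which is the total current out of $\alpha$. By \eqref{pre:count-conductance} this equals $\mathcal C(\alpha\leftrightarrow\beta)$ in $H_0$, and that is the energy of $\phi$.

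For the variance I would use planar duality, as the overview announces. Because $D$ and $D^c$ are connected and $P$ is connected, the graph $H$ is planar with $\alpha$ and $\beta$ adjacent through $e$. A rooted spanning forest of $H_0$ with roots $\alpha,\beta$, together with $e$, is a spanning tree of $H$. Its dual is a spanning tree of the dual graph $H^\dagger$. The edges $\alpha y$ with $y$ joined to $\beta$ are exactly the edges at $\alpha$ lying on the cut between the two components. That cut is dual to the fundamental cycle of $e^\dagger$, that is, to the dual tree path $\gamma^\dagger$ between the endpoints $a,b$ of $e^\dagger$. The edges at $\alpha$ dualize to the boundary face-cycle of $\alpha$, a fixed dual path $Q$ of length about $m$ from $a$ to $b$. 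So $\cL_{H_0}=\sum_{f\in Q}\mathbf 1\{f\in\gamma^\dagger\}$, and
\begin{equation*}
\operatorname{Var}(\cL_{H_0})=\sum_{f,g\in Q}\bigl(\PP(f,g\in\gamma^\dagger)-\PP(f\in\gamma^\dagger)\PP(g\in\gamma^\dagger)\bigr).
\end{equation*}
By Kirchhoff's theorem and Wilson's algorithm, the path $\gamma^\dagger$ is a loop-erased random walk from $a$ to $b$ in $H^\dagger\setminus e^\dagger$. The one-edge probabilities are currents of the dual flow, namely the differences of the harmonic conjugate of $\phi$ along $Q$, and they sum to $\EE[\cL_{H_0}]$.

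For the two-edge probabilities I would use the Kenyon--Wilson boundary formulas. Since $Q$ lies on the outer face after cutting along $e^\dagger$, the relevant nodes are all on one face. Kenyon--Wilson then express $\PP(f,g\in\gamma^\dagger)$ as a determinant in Green functions and Poisson kernels at boundary points, and the covariance reduces to a product of two such kernels, schematically $|\text{covariance}|\lesssim p_f\,p_g\,G(f,g)$ plus boundary terms, with $p_f$ the one-edge probability. Summing over $g$ for fixed $f$ requires a bound of the form
\begin{equation*}
\sum_{g\in Q}p_g\,G(f,g)\le C\Bigl(1+\log\frac m{\EE[\cL_{H_0}]}\Bigr)
\end{equation*}
on average over $f$ weighted by $p_f$. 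This is a uniform estimate on a boundary Green function, and it is the main obstacle.

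To get it I would use the Brooks--Smith--Stone--Tutte square tiling of $H_0$ with respect to $\phi$, in Lovász's form. Each edge becomes a square whose side is its current, the whole picture fills a rectangle of width $\EE[\cL_{H_0}]$ and height $1$, and the edges at $\alpha$ become the bottom row. Dual quantities then become geometry of squares in a strip. The probability that the dual path crosses near $g$ corresponds to horizontal extent, and the boundary Green function between $f$ and $g$ behaves like a logarithm of the ratio of horizontal distance to square sizes. The bottom row has $m$ squares of total width $\EE[\cL_{H_0}]$, so the average square size is $\EE[\cL_{H_0}]/m$. Summing the logarithmic kernel against widths then gives order $\EE[\cL_{H_0}](1+\log(m/\EE[\cL_{H_0}]))$ through Jensen's inequality over the squares. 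The delicate point is justifying the logarithmic Green-function bound for arbitrary tilings. I would attempt this by a Dirichlet-principle comparison using test functions built from the tiling coordinates.
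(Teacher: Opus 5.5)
\textbf{Verdict: genuine gap.} Your mean computation and duality setup are correct and match Lemma~\ref{el:means}. The tools you name for the variance (Kenyon--Wilson, the square tiling, Dirichlet test functions, Jensen) are the paper's. But the two steps that carry the variance bound are placeholders, and the form you give the first one does not work.

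\textbf{The two-point step.} You never say what $G$ is in $|\operatorname{Cov}|\le C\,p_fp_gG(f,g)$. The natural candidate, the Green function $\mathcal G$ of $H^\dagger$ grounded at $v_m$, already violates your required bound at $f=v_0$. By \eqref{el:mean-edge}, $\mathcal G(v_0,v_i)=\sum_{k\ge i}\EE[I_k]$, so
\[
\sum_i\EE[I_i]\,\mathcal G(v_0,v_i)\ge\tfrac12(\EE[\cL_{H_0}])^2 ,
\]
which is far larger than $\log(m/\EE[\cL_{H_0}])$ once $\EE[\cL_{H_0}]\ge\frac12\log\ell$. A termwise covariance bound would instead need decay estimates for mixed second differences of $\mathcal G$ between distant boundary points, which you do not supply. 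The missing idea is that no off-diagonal estimate is needed. The Kenyon--Wilson identities give the exact formula \eqref{el:pair-expanded}. Summing its first two terms over $j>i$ and its last two over $i<j$ telescopes, because $\mathcal G(\cdot,v_m)=0$. The identity $\sum_i\EE[I_i](\mathcal G(v_0,v_i)+\mathcal G(v_0,v_{i+1}))=\mathcal G(v_0,v_0)^2$ then absorbs $(\EE[\cL_{H_0}])^2$. Together with $2\mathcal G(v_i,v_{i+1})\le\mathcal G(v_i,v_i)+\mathcal G(v_{i+1},v_{i+1})$, this gives Lemma~\ref{el:arc-moment}, which involves only the diagonal differences $\mathcal G(v_k,v_k)-\mathcal G(v_0,v_k)$.

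\textbf{The Green-function estimate.} This step has the same normalization problem. The quantity to bound is the difference $\mathcal G(v_i,v_i)-\mathcal G(v_0,v_i)$, not a Green function value. Up to sign, $\mathcal G(v_0,\cdot)$ is the vertical coordinate of the tiling, so it grows linearly along a long strip. By the maximum principle $\mathcal G(v_i,v_i)\ge\mathcal G(v_0,v_i)$, and for $i=1$ the latter is at least $\EE[\cL_{H_0}]-1$. So your heuristic that the boundary Green function is a logarithm of distance over square size is false for the undifferenced kernel.

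The paper isolates the difference exactly in Lemma~\ref{el:split-estimate}. By duality, $\mathcal G(v_i,v_i)=\mathcal C(\alpha_1\leftrightarrow\{\alpha_0,\beta\})$ in the network $H_0'$ obtained by splitting $\alpha$ between $e_{i-1}$ and $e_i$. Testing the Dirichlet principle with $(1-\phi)f$ gives exactly $\mathcal G(v_0,v_i)$, the net current into $\alpha_1$, plus the weighted energy $\sum_{xy}(1-\phi(x))(1-\phi(y))(f(x)-f(y))^2$. Only this remainder is estimated through the tiling. One reads $f$ off from $F(z)=\frac12+z_2/(2\max\{1,|z|_\infty\})$, after rescaling so the square of $e_j$ is $[0,2]\times[-1,1]$. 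Disjointness of the squares in a strip of width $2/\EE[I_j]$ then gives $\sum_Q(\operatorname{osc}_QF)^2\le16(1+\log(2/\EE[I_j]))$.

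This pointwise bound in terms of $\EE[I_i]$ (using $j\in\{i-1,i\}$) is what makes your final Jensen step legitimate. Without the telescoping reduction and the $(1-\phi)f$ subtraction, your proposal does not yet prove \eqref{el:reference-estimate}.
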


\begin{proof}[Proof of Proposition~\ref{fo:local-comparison} assuming Proposition~\ref{el:reference-moment}]
\emph{Step 1.} We find sets~$D$ and~$P$ and an edge~$e$ as above such that~$\nu_{D^c,P}$ is stochastically dominated by~$\nu_{A,\Gamma}$, and such that under~$\nu_{D^c,P}$
\begin{equation}\label{fo:count-split}
\cL_{H_0}=\cL(A)-1+\cL(D^c\setminus A)\,.
\end{equation}
By the assumptions on~$\Gamma$, there is a nearest-neighbor path in~$\Gamma\cap [-\ell,\ell]^2$ from a neighbor of~$0$ to~$\partial[-\ell,\ell]^2$. Let~$P$ be the vertex set of this path, let~$e$ be the edge from~$0$ to the first vertex of the path, and let~$D$ be the component of~$[-2\ell+1,2\ell-1]^2\setminus A$ that contains~$P$. Observe that the sets~$D$ and~$P$ and the edge~$e$ satisfy the assumptions in the paragraph before Proposition~\ref{el:reference-moment}. Since~$A\subset D^c$ and~$P\subset\Gamma$, Lemma~\ref{fo:lattice-monotonicity} shows that~$\nu_{D^c,P}$ is stochastically dominated by~$\nu_{A,\Gamma}$. The edges of~$H_0$ at~$\alpha$ are the lattice edges from~$D^c$ to~$D$ other than~$e$, so~$\cL_{H_0}=\cL(D^c)-1$ under~$\nu_{D^c,P}$, which is~\eqref{fo:count-split}.

\emph{Step 2.} We bound the first two moments of~$\cL_{H_0}$: with~$C$ the constant of Proposition~\ref{el:reference-moment}, we show that
\begin{equation}\label{fo:moments}
\EE[\cL_{H_0}]\geq\tfrac12\log \ell\geq32\qquad\text{and}\qquad\EE[\cL_{H_0}^2]\leq(1+8C)(\EE[\cL_{H_0}])^2\,.
\end{equation}
By Proposition~\ref{el:reference-moment}, the mean~$\EE[\cL_{H_0}]$ is the energy of~$\phi$ in~$H_0$, which is the sum of~$(\phi(x)-\phi(y))^2$ over the lattice edges~$xy$ other than~$e$. For~$1\leq j\leq\ell$, the cycle of~$8j$ lattice edges along~$\partial[-j,j]^2$ avoids~$e$ and meets~$A$, where~$\phi$ vanishes, and~$P$, where~$\phi$ equals one, so by the Cauchy--Schwarz inequality it contributes at least~$1/(2j)$ to the energy of~$\phi$. Summing over~$j$ gives~$\EE[\cL_{H_0}]\geq\tfrac12\log\ell$. Let~$\ell_*\coloneqq10^{30}$, so that~$\tfrac12\log\ell\geq32$ for~$\ell\geq\ell_*$. Since~$m\leq\ell^3$, the first bound in~\eqref{fo:moments} gives~$1+\log(m/\EE[\cL_{H_0}])\leq8\,\EE[\cL_{H_0}]$, and~\eqref{el:reference-estimate} gives the second bound.

\emph{Step 3.} We bound the mean of the last term in~\eqref{fo:count-split}:
\begin{equation}\label{fo:loss}
\EE_{\nu_{D^c,P}}\bigl[\cL(D^c\setminus A)\bigr]\leq12\,.
\end{equation}
Since~$D$ is a component of~$[-2\ell+1,2\ell-1]^2\setminus A$, every edge from~$D^c\setminus A$ to~$D$ starts outside~$[-2\ell+1,2\ell-1]^2$. By~\eqref{fo:color-marginal}, the endpoint~$y\in D$ of such an edge is joined to~$P$ under~$\nu_{D^c,P}$ with probability at most~$\PP_y(\text{the walk visits~$[-\ell,\ell]^2$ before~$\Z^2\setminus [-2\ell+1,2\ell-1]^2$})$. By~\eqref{pre:count-conductance}, the left side of~\eqref{fo:loss} is therefore at most~$\mathcal C([-\ell,\ell]^2\leftrightarrow\Z^2\setminus[-2\ell+1,2\ell-1]^2)$. Applying~\eqref{pre:dirichlet} to the function~$x\mapsto\min\{1,\max\{0,2-|x|_\infty/\ell\}\}$, which changes by~$1/\ell$ along~$12\ell^2$ edges and is constant along the others, shows that this conductance is at most~$12$, which proves~\eqref{fo:loss}.

\emph{Step 4.} We apply the Paley--Zygmund inequality to~$\cL(A)-1$ and deduce the proposition: with~$\varepsilon\coloneqq1/(16(1+8C))$ for the~$C$ of Step~2, we show that
\begin{equation}\label{fo:pz}
\nu_{D^c,P}\bigl(\cL(A)-1\geq\EE[\cL_{H_0}]/4\bigr)\geq\varepsilon\,.
\end{equation}
Since~$e$ goes from~$0\in A$ to a vertex of~$P$, the identity~\eqref{fo:count-split} shows that~$\cL(A)-1$ lies between zero and~$\cL_{H_0}$ under~$\nu_{D^c,P}$. By~\eqref{fo:count-split}, \eqref{fo:moments}, and~\eqref{fo:loss}, the mean of~$\cL(A)-1$ is at least~$\EE[\cL_{H_0}]-12\geq\EE[\cL_{H_0}]/2$, and the second moment of~$\cL(A)-1$ is at most~$(1+8C)(\EE[\cL_{H_0}])^2$. The Paley--Zygmund inequality therefore gives~\eqref{fo:pz}.

Since every vertex joined to~$P$ lies in~$D\subset[-2\ell+1,2\ell-1]^2$ and~$\EE[\cL_{H_0}]/4\geq\tfrac18\log\ell$, the bound~\eqref{fo:pz} and Step~1 prove the proposition.
\end{proof}

\section{Repeating the estimate at many scales}\label{ct:section}

In this section we prove Proposition~\ref{ct:reciprocal}. We build the uniform rooted spanning forest with roots~$K\cup\gamma$ with Wilson's algorithm one scale at a time, at scales~$\ell_1<\ell_2<\cdots$, each a fixed power of the previous one, and at each scale we apply Proposition~\ref{fo:local-comparison} given the branches built at the earlier scales. Section~\ref{ssec:added-roots} extends Proposition~\ref{fo:local-comparison} to root sets that contain these branches, and Section~\ref{ssec:scale-by-scale} carries out the construction.

\subsection{Roots added inside a box}\label{ssec:added-roots}

At the scale~$\ell=\ell_i$, the branches built at the earlier scales become roots, so Wilson's algorithm continues with roots~$A\cup\Gamma$, where~$A\supset K$ adds the branches joined to~$K$ and~$\Gamma\supset\gamma$ adds the others. Lemma~\ref{ct:revealed-roots} below shows that~$\cL(K\cap[-3\ell,3\ell]^2)\geq\tfrac1{16}\log\ell$ still holds with probability at least~$\varepsilon/2$ under~$\nu_{A,\Gamma}$, provided that~$A\subset K\cup[-h,h]^2$ for some~$h$ much smaller than~$\ell$. Its proof adds to~$A$ the box~$[-h,h]^2$ and a path from the box to a vertex~$w\in\partial[-2h,2h]^2$ next to~$\gamma$, and applies Proposition~\ref{fo:local-comparison} at~$w$. Lemma~\ref{ct:needle} chooses the path so that the box and the path have in mean at most~$C\log h$ edges to vertices joined to~$\Gamma$, which is small compared with~$\log\ell$.

\begin{lemma}\label{ct:needle}
There is a universal constant~$C<\infty$ such that, for every integer~$h\geq10$ and every~$w\in\partial[-2h,2h]^2$, there is a nearest-neighbor path in~$[-2h,2h]^2$ from~$w$ to~$[-h,h]^2$ whose vertex set~$\pi$ satisfies~$\mathcal C([-h,h]^2\cup\pi\leftrightarrow\Z^2\setminus[-2h,2h]^2)\leq C\log h$.
\end{lemma}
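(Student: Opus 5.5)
We will bound the conductance through Dirichlet's principle~\eqref{pre:dirichlet}: we pick a nearest-neighbor path from~$w$ to~$[-h,h]^2$ and exhibit a test function~$f$ that equals one on~$[-h,h]^2\cup\pi$, vanishes outside~$[-2h,2h]^2$, and has energy at most~$C\log h$. The box by itself costs only a constant, since the piecewise linear function $x\mapsto\min\{1,\max\{0,2-|x|_\infty/h\}\}$ used in Step~3 of the proof of Proposition~\ref{fo:local-comparison} has energy at most~$12$. The difficulty is the path: it reaches~$w\in\partial[-2h,2h]^2$, where~$f$ must jump from one to zero across nearby edges, and a careless path in the annulus forces the function to drop over short distances all along its length. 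We therefore choose a straight path. By symmetry we may assume that~$w=(2h,w_2)$ with~$|w_2|\leq2h$. We take~$\pi$ to be the horizontal segment from~$w$ to~$(h,w_2)$ when~$|w_2|\leq h$. Otherwise we take the segment from~$w$ to~$(2h-\lceil h/2\rceil,w_2)$, followed by the vertical segment to~$[-h,h]^2$; this keeps the path at distance of order~$h$ from~$\partial[-2h,2h]^2$ except near~$w$.

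For the test function we take~$f=\max\{f_1,f_2\}$. Here~$f_1$ is the box function above, and~$f_2$ is a logarithmic profile around the path:
\begin{equation*}
f_2(x)\coloneqq\max\Bigl\{0,\,1-\frac{\log(1+\operatorname{dist}(x,\pi))}{\log(1+\operatorname{dist}(x,\Z^2\setminus[-2h,2h]^2))}\Bigr\}\,,
\end{equation*}
suitably truncated. A cleaner variant is to set~$f_2(x)\coloneqq g(|x-w|)$ with~$g(s)=\log(h/s)/\log h$ for~$1\leq s\leq h$, clipped to~$[0,1]$, and to combine it with a function that equals one on a tube of width order~$h$ around the part of~$\pi$ away from~$w$. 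The energy of~$\max\{f_1,f_2\}$ is at most the sum of the energies, so it suffices to bound each piece.

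The logarithmic profile centered at~$w$ has energy of order
\begin{equation*}
\sum_{s=1}^{h}s\cdot\Bigl(\frac1{s\log h}\Bigr)^2\asymp\frac1{\log h}\,.
\end{equation*}
However, this profile does not equal one along the path near~$w$. Instead, on the disc of radius~$h/4$ about~$w$ we use the function that is one on~$\pi$ and decays like~$1-\log(1+d)/\log(1+\delta)$, where~$d$ is the distance to the segment and~$\delta$ is the distance to the boundary line. In the strip near the boundary this is the classical estimate: the conductance between a segment of length~$L$ ending at a boundary line and the line, within a region of width of order~$L$, is of order at most~$\log L$. We will prove this estimate by covering the segment by dyadic pieces at distance~$2^k$ from~$w$, for~$0\leq k\leq\log_2h$, each carrying a function that drops from one to zero over a square annulus of size~$2^k$ with bounded energy. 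There are~$O(\log h)$ pieces, so the total energy is~$O(\log h)$. Away from~$w$, the path stays at distance at least~$h/4$ from~$\Z^2\setminus[-2h,2h]^2$, and a linear ramp over distance~$h/8$ around a tube of length~$O(h)$ costs~$O(1)$.

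The main obstacle is to define the dyadic pieces near~$w$ so that their maximum is one on~$\pi$, vanishes outside~$[-2h,2h]^2$, and has controlled energy when pieces at adjacent scales overlap. Bounded overlap will handle this, since the energy of a maximum is at most the sum of the energies. Combining the pieces with~\eqref{pre:dirichlet} then gives the bound~$C\log h$.
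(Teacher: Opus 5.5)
There is a genuine gap in your choice of path when $h<|w_2|\leq2h$, and this is the case where the lemma has content. As written, your vertical segment lies on the line $x_1=2h-\lceil h/2\rceil>h$, so it never reaches $[-h,h]^2$. More seriously, the horizontal segment $\{(2h-k,w_2):0\leq k\leq\lceil h/2\rceil\}$ stays at distance $2h+1-|w_2|$ from $\Z^2\setminus[-2h,2h]^2$ along its whole length. So your claim that the path stays at distance of order $h$ from the boundary away from $w$ is false when $w$ is near a corner. Concretely, suppose $|w_2|=2h-d$. The $\lceil h/2\rceil+1$ vertical lattice paths of $d+1$ edges from the vertices of this segment to the row $x_2=\pm(2h+1)$ are edge-disjoint. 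By Rayleigh monotonicity the conductance is therefore at least $(\lceil h/2\rceil+1)/(d+1)$, which exceeds $C\log h$ once $d\leq h/(C\log h)$. At the corner $w=(2h,2h)$, every vertex of the segment has an edge leaving the box, so the conductance is at least $h/2$. No test function can repair this. Nor can any straight path, since every axis-parallel segment from a corner runs along a side of the box. Your dyadic covering, like the ``classical estimate'' you invoke, needs each piece $\{x\in\pi:2^k\leq|x-w|_\infty<2^{k+1}\}$ to lie at distance of order $2^k$ from the complement of the box. That geometric condition is the real obstacle, not the overlap of pieces. The overlap is harmless anyway, because the energy of a maximum is at most the sum of the energies.

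The missing idea is a path that moves inward at a linear rate uniformly in $w$, including at the corners. Step toward $0$ in a coordinate of largest absolute value, which gives a diagonal staircase from a corner. Then $|\cdot|_\infty$ never increases along the path and decreases at least once in every two steps, so every $x\in\pi$ satisfies $2h+1-|x|_\infty\geq\frac12(1+|x-w|_\infty)$. With this path your dyadic argument goes through. Each piece lies in a box of side of order $2^k$ at distance of order $2^k$ from the outside, so it carries energy $O(1)$, and there are $O(\log h)$ pieces. The paper packages the same scale-invariant decay into the single function $f(x)=\min\{1,3\max\{0,2h+1-|x|_\infty\}/(1+|x-w|_\infty)\}$. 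By the inequality above, $f$ equals one on $\pi$; it also equals one on $[-h,h]^2$ and vanishes outside the box. It changes by at most $6/(k+1)$ along each of the at most $32k$ edges whose farther endpoint is at $|\cdot-w|_\infty$-distance $k$ from $w$. Its energy is therefore $O(\log h)$, with no decomposition into pieces and no separate treatment of the box.
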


\begin{proof}
Consider a nearest-neighbor path that starts at~$w$ and, until it enters~$[-h,h]^2$, steps toward~$0$ in a coordinate of largest absolute value. Along this path~$|\cdot|_\infty$ never increases and decreases at least once in every two consecutive steps. Let~$\pi$ be the vertex set of this path.

We bound the conductance by applying~\eqref{pre:dirichlet} to the function~$f$ defined for~$x\in\Z^2$ by
\begin{equation*}
f(x)\coloneqq\min\Bigl\{1,\frac{3\max\{0,2h+1-|x|_\infty\}}{1+|x-w|_\infty}\Bigr\}\,.
\end{equation*}
Then~$f$ vanishes outside~$[-2h,2h]^2$ and equals one on~$[-h,h]^2$ and, by the previous paragraph, on~$\pi$. Since~$|w|_\infty=2h$, the ratio in the definition of~$f$ is at most three, so~$f$ changes by at most~$6/(k+1)$ along a lattice edge~$xy$ with~$\max\{|x-w|_\infty,|y-w|_\infty\}=k$. At most~$32k$ edges have a given~$k\geq1$, so, since~$h\geq10$, the energy of~$f$ is at most~$\sum_{k=1}^{4h+1}1152k/(k+1)^2\leq2304\log h$.
\end{proof}

\begin{lemma}\label{ct:revealed-roots}
There is a universal constant~$\kappa\in\mathbb N$ such that the following holds, with~$\varepsilon$ as in Proposition~\ref{fo:local-comparison}. Let~$K$ and~$\gamma$ be as in Proposition~\ref{ct:reciprocal}, with~$0\in K$ and with a neighbor of~$0$ in~$\gamma$, and let~$h\geq10$ and~$\ell\geq h^\kappa$ be integers with~$\diam_\infty K\geq16\ell$. Let~$A$ and~$\Gamma$ be disjoint sets of vertices with~$K\subset A\subset K\cup [-h,h]^2$ and~$\gamma\subset\Gamma$. Then~$\nu_{A,\Gamma}\bigl(\cL(K\cap [-3\ell,3\ell]^2)\geq\tfrac1{16}\log \ell\bigr)\geq\varepsilon/2$.
\end{lemma}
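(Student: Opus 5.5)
The plan is to enlarge the root set $A$ to a set $A'$ that is connected near a new center $w$, apply Proposition~\ref{fo:local-comparison} at $w$, transfer the bound back to $\nu_{A,\Gamma}$ with Lemma~\ref{fo:lattice-monotonicity}, and remove the edges from the added roots with Lemma~\ref{ct:needle} and Markov's inequality.

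\emph{Choice of $w$ and of the new roots.} Since $\gamma$ is connected, has finite complement, and contains a neighbor of $0$, there is a nearest-neighbor path in $\gamma$ from that neighbor to infinity. Let $w'$ be the last vertex of this path in $[-2h,2h]^2$. Let $\Gamma'$ be the vertex set of the part of the path after $w'$, which is connected, infinite, and lies outside $[-2h,2h]^2$. Since $w'$ has a neighbor in $\Gamma'$, the vertex $w'$ lies on $\partial[-2h,2h]^2$; set $w\coloneqq w'$.

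Lemma~\ref{ct:needle} gives a path from $w$ to $[-h,h]^2$ inside $[-2h,2h]^2$ with vertex set $\pi$. Let $B\coloneqq[-h,h]^2\cup\pi$, which is disjoint from $\Gamma'$. The new root set will be
\begin{equation*}
A'\coloneqq A\cup B\cup(\Gamma\setminus\Gamma')\,,
\end{equation*}
so that $A\subset A'$, $\Gamma'\subset\Gamma$, and $A'\cup\Gamma'\supset A\cup\Gamma$ has finite complement. The piece $K\cup B$ is connected because $0\in K\cap[-h,h]^2$. If the pieces of $\Gamma\setminus\Gamma'$ break the connectivity of $A'$ required by Proposition~\ref{fo:local-comparison}, I would instead apply that proposition to the component of $K\cup B$ and let the extra roots enter only through the monotone comparison of Lemma~\ref{fo:lattice-monotonicity}. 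This bookkeeping needs care, but it is not where the loss is.

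\emph{Applying the one-scale bound.} Translate by $w$ and apply Proposition~\ref{fo:local-comparison} with scale $\ell$. The root set $A'$ contains $w$, the set $\Gamma'$ contains a neighbor of $w$ and is connected and infinite, so it meets $w+\partial[-\ell,\ell]^2$. Since $\diam_\infty K\geq16\ell$ and $K\subset A'$ is connected, $A'$ meets the complement of $w+[-2\ell+1,2\ell-1]^2$. Hence, with probability at least $\varepsilon$ under $\nu_{A',\Gamma'}$,
\begin{equation*}
\cL\bigl(A'\cap(w+[-2\ell,2\ell]^2)\bigr)\geq\tfrac18\log\ell\,.
\end{equation*}
Because $2h\leq\ell$, we have $w+[-2\ell,2\ell]^2\subset[-3\ell,3\ell]^2$, so the counted set lies in $V\coloneqq(K\cap[-3\ell,3\ell]^2)\cup B$ together with pieces of $\Gamma\setminus\Gamma'$. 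Since $\cL(V)$ is monotone in the set joined to $\Gamma$, Lemma~\ref{fo:lattice-monotonicity} transfers this bound to $\nu_{A,\Gamma}$:
\begin{equation*}
\nu_{A,\Gamma}\bigl(\cL(V)\geq\tfrac18\log\ell\bigr)\geq\varepsilon\,.
\end{equation*}

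\emph{Removing the added edges.} It remains to show that edges from $B$ contribute little. By~\eqref{fo:color-marginal}, a vertex $y$ is joined to $\Gamma$ under $\nu_{A,\Gamma}$ with probability at most $\PP_y(\text{the walk visits }\Z^2\setminus[-2h,2h]^2\text{ before }B)$, because $B\subset A$ there after enlargement. Up to the edges inside $[-2h,2h]^2$, which can only be joined to $\Gamma$ through walks leaving the box, \eqref{pre:count-conductance} and Lemma~\ref{ct:needle} bound the mean number of edges from $B$ to vertices joined to $\Gamma$ by $C\log h\leq(C/\kappa)\log\ell$. Markov's inequality then makes the probability that these edges number at least $\tfrac1{16}\log\ell$ at most $16C/\kappa\leq\varepsilon/2$ for $\kappa$ large. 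On the complementary event, $\cL(K\cap[-3\ell,3\ell]^2)\geq\tfrac18\log\ell-\tfrac1{16}\log\ell=\tfrac1{16}\log\ell$, which proves the lemma.

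\emph{Main obstacle.} I expect the hardest step to be this last one. The difficulty is justifying the conductance bound for the edges from $B$ under $\nu_{A,\Gamma}$ itself, where $B$ is not a root, rather than under $\nu_{A',\Gamma'}$. I would first try to perform the whole count under $\nu_{A\cup B,\Gamma}$, which is dominated by $\nu_{A,\Gamma}$, and only then compare the laws.
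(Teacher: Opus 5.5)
Your ingredients are the right ones, but your choice of $\Gamma'$ breaks the argument, and this is where the loss is, not a matter of bookkeeping. Since your $\Gamma'$ is only the tail of a path, you need $\Gamma\setminus\Gamma'\subset A'$ to keep the complement of $A'\cup\Gamma'$ finite. Then the edges counted by Proposition~\ref{fo:local-comparison} may all start in $\Gamma\setminus\Gamma'$, which are exactly the pieces you discard when you pass to $V$.

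For example, let $K=\{(k,0):0\leq k\leq16\ell\}$, $A=K$, $\Gamma=\gamma=\Z^2\setminus K$, and let your path go straight up from $(0,1)$. Then $w=(0,2h)$, $\Gamma'=\{(0,k):k>2h\}$ and $\pi=\{(0,k):h\leq k\leq2h\}$. Also $A'=\Z^2\setminus\Gamma'$ is connected, so your fallback changes nothing, and every vertex is a root. The conclusion of the proposition holds only because about $4\ell$ counted edges join $\Gamma\setminus\Gamma'$ to the ray $\Gamma'$. Under $\nu_{A',\Gamma'}$ one has $\cL(V)=1$ and $\cL(K\cap[-3\ell,3\ell]^2)=0$ deterministically. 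So your display $\nu_{A,\Gamma}(\cL(V)\geq\tfrac18\log\ell)\geq\varepsilon$ is not derived.

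The missing idea is to take $\Gamma'$ to be the infinite component of $\gamma\setminus[-2h,2h]^2$. It contains the tail of your path, hence a neighbor of $w$. It is connected, lies in $\Gamma$ outside $[-2h,2h]^2$, and has finite complement. Then $A'\coloneqq K\cup[-h,h]^2\cup\pi$, which is your $A\cup B$, is connected and satisfies all hypotheses of the proposition at $w$. This uses $\diam_\infty K\geq16\ell$, and you should also take $\kappa\geq\log_{10}\ell_*$ so that $\ell\geq\ell_*$. Moreover $A'\setminus K\subset B$.

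Second, the Markov step has to be carried out under $\nu_{A',\Gamma'}$, not under $\nu_{A,\Gamma}$. Domination by $\nu_{A,\Gamma}$ transfers lower bounds on increasing events, never upper bounds on means. Under $\nu_{A,\Gamma}$ the number of edges from $B$ to vertices joined to $\Gamma$ need not be $O(\log h)$. The reason is that $\Gamma$ always has vertices in $[-h,h]^2$ (a neighbor of $0$), so vertices of $[-2h,2h]^2$ can be joined to $\Gamma$ without any walk leaving the box. In the example above this number is of order $h^2$. Making $B$ a root while keeping the $\Gamma$-roots next to $B$, as in your suggested $\nu_{A\cup B,\Gamma}$, still leaves order $h$.

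Lemma~\ref{ct:needle} applies under $\nu_{A',\Gamma'}$ precisely because every root of $\Gamma'$ lies outside $[-2h,2h]^2$ and $[-h,h]^2\cup\pi\subset A'$. Then \eqref{fo:color-marginal} and \eqref{pre:count-conductance} bound the mean of $\cL(A'\setminus K)$ by $C\log h$. The correct order is as follows. Under $\nu_{A',\Gamma'}$, combine three facts:
\begin{itemize}
\item the event of probability at least $\varepsilon$ from the proposition;
\item the bound $\nu_{A',\Gamma'}(\cL(A'\setminus K)>\tfrac1{16}\log\ell)\leq16C/\kappa\leq\varepsilon/2$;
\item the inequality $\cL(A'\cap(w+[-2\ell,2\ell]^2))\leq\cL(K\cap[-3\ell,3\ell]^2)+\cL(A'\setminus K)$.
\end{itemize}
Together they give $\cL(K\cap[-3\ell,3\ell]^2)\geq\tfrac1{16}\log\ell$ with probability at least $\varepsilon/2$. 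Only this increasing event is then transferred to $\nu_{A,\Gamma}$ by Lemma~\ref{fo:lattice-monotonicity}.
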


\begin{proof}
Let~$C$ be the constant of Lemma~\ref{ct:needle} and~$\ell_*$ that of Proposition~\ref{fo:local-comparison}, and let~$\kappa$ be an integer with~$\kappa\geq\max\{\log_{10}\ell_*,32C/\varepsilon\}$. Since~$\gamma$ is connected with finite complement, there is a simple path in~$\gamma$ from a neighbor of~$0$ to the infinite component~$\Gamma'$ of~$\gamma\setminus [-2h,2h]^2$. Let~$w$ be the last vertex of this path in~$[-2h,2h]^2$, so that~$w\in\partial[-2h,2h]^2$ and~$\Gamma'$ contains a neighbor of~$w$. Let~$\pi$ be the vertex set of the path given by Lemma~\ref{ct:needle} for~$h$ and~$w$, and let~$A'\coloneqq K\cup[-h,h]^2\cup\pi$. Since~$A\subset A'$ and~$\Gamma'\subset\Gamma$, Lemma~\ref{fo:lattice-monotonicity} shows that~$\nu_{A',\Gamma'}$ is stochastically dominated by~$\nu_{A,\Gamma}$.

The sets~$A'$ and~$\Gamma'$ satisfy the assumptions of Proposition~\ref{fo:local-comparison} translated by~$w$, because~$\diam_\infty K\geq16\ell$ and~$\Gamma'$ is unbounded. Since~$\ell\geq h^\kappa\geq\ell_*$, this proposition gives
\begin{equation*}
\nu_{A',\Gamma'}\bigl(\cL(A'\cap (w+[-2\ell,2\ell]^2))\geq\tfrac18\log \ell\bigr)\geq\varepsilon\,.
\end{equation*}
Since~$A'\setminus K\subset[-h,h]^2\cup\pi\subset A'$ and~$\Gamma'$ lies outside~$[-2h,2h]^2$, the displays~\eqref{fo:color-marginal} and~\eqref{pre:count-conductance} bound the mean of~$\cL(A'\setminus K)$ under~$\nu_{A',\Gamma'}$ by~$\mathcal C([-h,h]^2\cup\pi\leftrightarrow\Z^2\setminus[-2h,2h]^2)$, which is at most~$C\log h$ by Lemma~\ref{ct:needle}. Markov's inequality and~$\ell\geq h^\kappa$ give
\begin{equation*}
\nu_{A',\Gamma'}\bigl(\cL(A'\setminus K)>\tfrac1{16}\log \ell\bigr)\leq\frac{16C\log h}{\log \ell}\leq\frac{16C}\kappa\leq\frac\varepsilon2\,.
\end{equation*}
Since~$|w|_\infty=2h\leq \ell$, the box~$w+[-2\ell,2\ell]^2$ lies in~$[-3\ell,3\ell]^2$, so~$\cL(A'\cap (w+[-2\ell,2\ell]^2))\leq\cL(K\cap [-3\ell,3\ell]^2)+\cL(A'\setminus K)$. The last two displays therefore show that~$\cL(K\cap [-3\ell,3\ell]^2)\geq\tfrac1{16}\log \ell$ with~$\nu_{A',\Gamma'}$-probability at least~$\varepsilon/2$. Since~$\nu_{A',\Gamma'}$ is stochastically dominated by~$\nu_{A,\Gamma}$, the same holds under~$\nu_{A,\Gamma}$, which proves the lemma.
\end{proof}

\subsection{Exploring the forest scale by scale}\label{ssec:scale-by-scale}

\begin{proof}[Proof of Proposition~\ref{ct:reciprocal}]
Let~$K$ and~$\gamma$ be as in Proposition~\ref{ct:reciprocal}, and translate so that~$0\in K$ and~$\gamma$ contains a neighbor of~$0$. Let
\begin{equation*}
\ell_i\coloneqq2^{(6\kappa)^i}\qquad\text{for }i\geq1\,,
\end{equation*}
with~$\kappa$ as in Lemma~\ref{ct:revealed-roots}. Assume first that~$\diam_\infty K\geq16\ell_2$, let~$n\geq2$ be the largest integer with~$16\ell_n\leq\diam_\infty K$, and let~$K_i\coloneqq K\cap[-3\ell_i,3\ell_i]^2$ for~$1\leq i\leq n$.

Run Wilson's algorithm of Section~\ref{ssec:prelim-wilson} with roots~$K\cup\gamma$ and an enumeration of the vertices that lists the vertices of~$[-3\ell_1-1,3\ell_1+1]^2$ first, then, for~$i=2,\ldots,n$ in turn, the remaining vertices of~$[-3\ell_i-1,3\ell_i+1]^2$, and then the remaining vertices. The set of vertices joined to~$\gamma$ then has law~$\nu_{K,\gamma}$. We call the walks started in~$[-3\ell_i-1,3\ell_i+1]^2$ the \textbf{walks up to scale}~$\ell_i$. These walks determine~$\cL(K_i)$. Given these walks, let~$A$ be the union of~$K$ and the vertices of the branches already built that are joined to~$K$, and let~$\Gamma$ be the union of~$\gamma$ and the vertices of the other branches already built. The remaining walks carry out Wilson's algorithm with roots~$A\cup\Gamma$, so, given the walks up to scale~$\ell_i$, the set of vertices joined to~$\gamma$, which is the set of vertices joined to~$\Gamma$, has conditional law~$\nu_{A,\Gamma}$.

\emph{Step 1.} We bound the probability that some walk up to scale~$\ell_i$ leaves~$[-\ell_i^6,\ell_i^6]^2$: for~$1\leq i<n$,
\begin{equation}\label{ct:beurling}
\PP\bigl(\text{some walk up to scale~$\ell_i$ leaves~$[-\ell_i^6,\ell_i^6]^2$}\bigr)\leq C\ell_i^{-\nf12}\,.
\end{equation}
Since~$0\in K$ and~$\diam_\infty K\geq16\ell_n\geq2\ell_i^6$, the connected set~$K$ contains a simple path from~$0$ to~$\partial[-\ell_i^6,\ell_i^6]^2$. A walk up to scale~$\ell_i$ starts within Euclidean distance~$6\ell_i+4$ of~$0$ and stops by its first visit to~$K$. So, given the earlier walks, the discrete Beurling estimate of~\citet[display~(1.6)]{Kesten1987} in the form of~\citet[Lemma~2.1, first bound]{Lawler2020}, applied to this path, bounds the probability that the walk leaves~$[-\ell_i^6,\ell_i^6]^2$ by~$C\ell_i^{-\nf52}$. Summing over the at most~$(6\ell_i+3)^2$ walks up to scale~$\ell_i$ gives~\eqref{ct:beurling}.

\emph{Step 2.} We bound the mean of~$\cL(K_i)^{-1}$ in terms of the mean of~$\cL(K_{i-1})^{-1}$: for~$2\leq i\leq n$,
\begin{equation}\label{ct:recursion}
\EE\bigl[\cL(K_i)^{-1}\bigr]\leq(1-\varepsilon/2)\,\EE\bigl[\cL(K_{i-1})^{-1}\bigr]+\frac{16}{\log\ell_i}+C\ell_{i-1}^{-\nf12}\,.
\end{equation}
Since~$K_{i-1}\subset K_i$ and~$0\in K_{i-1}$ has a neighbor in~$\gamma$, we have~$\cL(K_i)^{-1}\leq\cL(K_{i-1})^{-1}\leq1$. Suppose that the walks up to scale~$\ell_{i-1}$ stay in~$[-\ell_{i-1}^6,\ell_{i-1}^6]^2$. Then the root sets after these walks satisfy~$A\subset K\cup [-\ell_{i-1}^6,\ell_{i-1}^6]^2$, so Lemma~\ref{ct:revealed-roots} applies to the conditional law~$\nu_{A,\Gamma}$ given these walks, with~$h=\ell_{i-1}^6$ and~$\ell=\ell_i=h^\kappa$. So, given these walks, $\cL(K_i)^{-1}\leq16/\log\ell_i$ with probability at least~$\varepsilon/2$, and the conditional mean of~$\cL(K_i)^{-1}$ is at most~$(1-\varepsilon/2)\cL(K_{i-1})^{-1}+16/\log\ell_i$. By~\eqref{ct:beurling}, some walk up to scale~$\ell_{i-1}$ leaves~$[-\ell_{i-1}^6,\ell_{i-1}^6]^2$ with probability at most~$C\ell_{i-1}^{-\nf12}$, and taking expectations gives~\eqref{ct:recursion}.

\emph{Step 3.} We iterate~\eqref{ct:recursion} and deduce the proposition: with~$a\coloneqq-\log(1-\varepsilon/2)/\log(6\kappa)>0$, we show that
\begin{equation*}
\EE_{\nu_{K,\gamma}}\bigl[\cL(K)^{-1}\bigr]\leq C\bigl(\log(\diam_\infty K+2)\bigr)^{-a}\,.
\end{equation*}
Since~$\log\ell_i=(6\kappa)^i\log2$, the last two terms in~\eqref{ct:recursion} are at most~$C(6\kappa)^{-i}$, and since~$(6\kappa)^{-1}<1-\varepsilon/2$, iterating~\eqref{ct:recursion} from~$\cL(K_1)^{-1}\leq1$ gives~$\EE_{\nu_{K,\gamma}}[\cL(K)^{-1}]\leq\EE[\cL(K_n)^{-1}]\leq C(1-\varepsilon/2)^n=C(6\kappa)^{-an}$. By the choice of~$n$, we have~$\log(\diam_\infty K+2)\leq C(6\kappa)^n$, which proves the bound. Since~$\cL(K)\geq1$, increasing~$C$ extends the bound to~$\diam_\infty K<16\ell_2$.
\end{proof}

\section{A second moment for boundary edges}\label{sec:electrical}

In this section we prove Proposition~\ref{el:reference-moment}. We first recall its setting, drawn in Figure~\ref{fig:network}. The set~$D\subset\Z^2$ is finite and connected with connected complement, the set~$P\subset D$ is nonempty and connected, and~$e$ is a lattice edge from~$D^c$ to~$P$. The planar multigraph~$H$ is obtained by identifying~$D^c$ to a vertex~$\alpha$ and~$P$ to a vertex~$\beta$ and deleting the resulting loops, and~$H_0$ is~$H$ with~$e$ deleted. The count~$\cL_{H_0}$ is the number of edges of~$H_0$ from~$\alpha$ to vertices joined to~$\beta$ in the uniform rooted spanning forest of~$H_0$ with roots~$\alpha$ and~$\beta$. The function~$\phi$ is the voltage in~$H_0$ that equals one at~$\beta$ and zero at~$\alpha$, and~$m$ is the number of edges of~$H_0$ at~$\alpha$. We must show that~$\EE[\cL_{H_0}]$ is the energy of~$\phi$ and that
\begin{equation*}
\operatorname{Var}(\cL_{H_0})\leq C\,\EE[\cL_{H_0}]\Bigl(1+\log\frac m{\EE[\cL_{H_0}]}\Bigr)
\end{equation*}
for a universal constant~$C<\infty$.

By planar duality, $\cL_{H_0}$ is the number of edges that a path in a uniform spanning tree of a dual graph shares with a fixed path on the boundary of this graph, so~$\cL_{H_0}$ is a sum of indicators, one for each edge of~$H_0$ at~$\alpha$. In Section~\ref{ssec:boundary-path} we express the means of these indicators, and with the identities of~\citet*{KenyonWilson2009} the means of their pairwise products, through the Green function of the dual graph. This reduces the variance bound of Proposition~\ref{el:reference-moment} to a bound on differences of values of this Green function at the vertices of the boundary path, which we prove in Section~\ref{ssec:electrical-coordinates} using the square tiling of~\citet*{BrooksSmithStoneTutte1940} in the form of~\citet[Theorem~6.1]{Lovasz2019}.

\subsection{Edges on a boundary path}\label{ssec:boundary-path}

Going around~$\alpha$, starting after~$e$, we meet faces~$v_0,\ldots,v_m$ of~$H$ and the edges~$e_0,\ldots,e_{m-1}$ of~$H_0$ at~$\alpha$ in the order
\begin{equation*}
v_0,\ e_0,\ v_1,\ e_1,\ \ldots,\ e_{m-1},\ v_m\,,
\end{equation*}
and then~$e$, as in Figure~\ref{fig:dual-path}. Since~$D$ is connected, so is~$H\setminus\{\alpha\}$, and hence the faces~$v_0,\ldots,v_m$ are distinct. So the duals of~$e_0,\ldots,e_{m-1}$ form a path~$v_0,\ldots,v_m$ in the plane dual of~$H$, and together with the dual of~$e$, which joins~$v_m$ to~$v_0$, they form a cycle around~$\alpha$. Let~$H^\dagger$ be the plane dual of~$H$ with the dual of~$e$ deleted. Its vertices are the faces of~$H$. Then~$H^\dagger$ lies in the closed disk that this cycle bounds on the side away from~$\alpha$, and the path~$v_0,\ldots,v_m$ lies on the boundary of this disk.

\begin{figure}[htbp]
\centering
\includegraphics{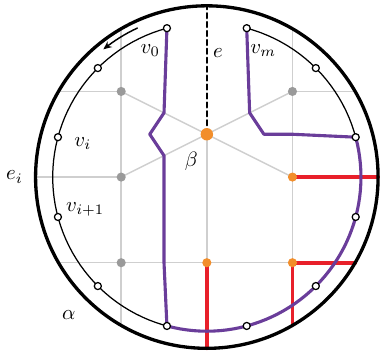}
\caption{The network~$H_0$ of Figure~\ref{fig:network}, with~$\alpha$ drawn as the circle. Going around~$\alpha$ from~$e$, the faces~$v_0,\ldots,v_m$, drawn as dots, alternate with the edges~$e_0,\ldots,e_{m-1}$. For the forest of Figure~\ref{fig:network}, the purple path from~$v_0$ to~$v_m$ in the corresponding spanning tree of~$H^\dagger$ uses the edge from~$v_i$ to~$v_{i+1}$ exactly when~$e_i$ is red.}
\label{fig:dual-path}
\end{figure}

Adding~$e$ to the uniform rooted spanning forest of~$H_0$ gives a tree uniformly distributed over the spanning trees of~$H$ that contain~$e$, and by~\citet[Proposition~12.1]{BLPS2001},
\begin{equation*}
\text{the duals of the edges absent from this tree form a uniform spanning tree of~$H^\dagger$}\,.
\end{equation*}
By planar duality, the path from~$v_0$ to~$v_m$ in this uniform spanning tree of~$H^\dagger$ consists of the duals of the edges of~$H_0$ between the two components of the forest. For~$0\leq i<m$, let~$I_i$ be the indicator that this path uses the dual of~$e_i$, that is, that the endpoint of~$e_i$ other than~$\alpha$ is joined to~$\beta$. Then
\begin{equation*}
\cL_{H_0}=\sum_{i=0}^{m-1}I_i\,.
\end{equation*}

Let~$\mathcal G$ be the Green function of~$H^\dagger$ grounded at~$v_m$, so that~$\mathcal G(x,y)$ is the voltage at~$y$ when unit current flows from~$x$ to~$v_m$ and the voltage at~$v_m$ is zero. In particular, $\mathcal G(x,v_m)=\mathcal G(v_m,x)=0$ for every vertex~$x$ of~$H^\dagger$, and~$\mathcal G(x,x)$ is the effective resistance between~$x$ and~$v_m$.

Lemma~\ref{el:means} below computes the means of the~$I_i$ and shows that~$\EE[\cL_{H_0}]$ is the energy of~$\phi$. Lemma~\ref{el:pair-contact} expresses the means of the products~$I_iI_j$ through~$\mathcal G$, and Lemma~\ref{el:arc-moment} then bounds~$\operatorname{Var}(\cL_{H_0})$ in terms of the differences~$\mathcal G(v_i,v_i)-\mathcal G(v_0,v_i)$. Lemma~\ref{el:split-estimate} in Section~\ref{ssec:electrical-coordinates} bounds each of these differences with~$0<i<m$ by a constant times~$1+\log(2/\EE[I_i])$ and by a constant times~$1+\log(2/\EE[I_{i-1}])$.

\begin{lemma}\label{el:means}
Every edge~$xy$ of~$H_0$, with faces~$v$ and~$v'$ of~$H$ on either side, satisfies
\begin{equation}\label{el:conjugate}
|\mathcal G(v_0,v)-\mathcal G(v_0,v')|=|\phi(x)-\phi(y)|\,.
\end{equation}
For~$0\leq i<m$, the endpoint~$x$ of~$e_i$ other than~$\alpha$ satisfies
\begin{equation}\label{el:mean-edge}
\mathcal G(v_0,v_i)-\mathcal G(v_0,v_{i+1})=\phi(x)=\EE[I_i]\,.
\end{equation}
Moreover,
\begin{equation}\label{el:mean-total}
\EE[\cL_{H_0}]=\mathcal G(v_0,v_0)\,,
\end{equation}
and this is the energy of~$\phi$.
\end{lemma}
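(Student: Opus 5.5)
The plan is to realize $\mathcal G(v_0,\cdot)$ as a harmonic conjugate of $\phi$ in the planar network $H_0$, and then read off the means from the boundary values.

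\emph{Conjugacy.} The unit current flow in $H^\dagger$ from $v_0$ to $v_m$ has potential $g:=\mathcal G(v_0,\cdot)$. Since $v_0$ and $v_m$ both lie on the boundary path, this flow is dual to a potential on $H_0$. Concretely, define a flow $\theta$ on the edges of $H^\dagger$ by rotating the current $\nabla\phi$ of $H_0$: the dual of an edge $xy$ carries $\phi(x)-\phi(y)$, oriented by the planar rule.
\begin{itemize}
\item Kirchhoff's node law for $\phi$ at each vertex $x\neq\alpha,\beta$ of $H_0$ becomes the cycle law for $\theta$ around the face of $H^\dagger$ dual to $x$.
\item The cycle law for $\nabla\phi$ around each face of $H_0$ becomes the node law for $\theta$ at each vertex of $H^\dagger$ other than $v_0$ and $v_m$.
\end{itemize}
The faces dual to $\alpha$ and $\beta$ need separate care. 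The face around $\alpha$ is the outer boundary, which contains the dual of the deleted edge $e$, so no cycle condition is imposed there. The face around $\beta$ is a cycle of $H^\dagger$, and $\theta$ around it sums to the net current of $\phi$ around $\beta$, which vanishes once orientations are checked. Here I use that $\beta$ is a single vertex, so $\phi$ is constant on it.

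The net divergence of $\theta$ at $v_0$ is the total flux across $\alpha$, which equals $\sum_i(\phi(x_i)-0)$, the strength of $\phi$. Writing $c$ for this strength, $\theta/c$ is a unit flow satisfying the cycle law, hence a gradient of a potential. By uniqueness it equals the unit current from $v_0$ to $v_m$, so its potential is $\mathcal G(v_0,\cdot)/\,$strength, up to normalization.

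The normalization is the main step to get right. Energy equals strength for $\phi$, since $\phi(\beta)=1$, so $c=\mathcal E(\phi)=\mathcal C$. On the dual side, $\mathcal G(v_0,v_0)=\mathcal R_{\mathrm{dual}}$. The dual potential of $\theta$ has drop $\sum_i(\phi(x_i)-\phi(\alpha))=c$ from $v_0$ to $v_m$, while carrying flux $\sum$ over a dual cut, which equals the drop of $\phi$ from $\beta$ to $\alpha$, namely $1$. So $\theta$ is itself a unit flow. Hence $\theta=\nabla\mathcal G(v_0,\cdot)$ exactly, giving \eqref{el:conjugate} with no scaling, and dual resistance $=c=$ primal conductance, the usual planar duality.

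\emph{Means.} Summing along the boundary path, the dual edge $v_iv_{i+1}$ is dual to $e_i=\alpha x$. Hence
\[
\mathcal G(v_0,v_i)-\mathcal G(v_0,v_{i+1})=\phi(x)-\phi(\alpha)=\phi(x),
\]
with the sign fixed by the fact that $g$ decreases from $v_0$ to $v_m$. By \eqref{fo:color-marginal} applied in $H_0$ (Wilson's algorithm started at $x$), $\EE[I_i]$ is the probability that the walk from $x$ hits $\beta$ before $\alpha$, which is $\phi(x)$ by \eqref{pre:voltage}. This proves \eqref{el:mean-edge}.

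\emph{Total mean.} Telescoping over $i$ and using $\mathcal G(v_0,v_m)=0$ gives \eqref{el:mean-total}. Finally, $\sum_i\phi(x_i)$ is the current out of $\alpha$ (up to sign), which by \eqref{pre:count-conductance} is $\mathcal C(\alpha\leftrightarrow\beta)$, the energy of $\phi$.

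The main obstacle I expect is the orientation and sign bookkeeping in the rotation, and checking the cycle condition at the face around $\beta$. I would handle both with the standard rule: rotate each edge $90^\circ$ counterclockwise.
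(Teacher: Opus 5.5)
Your route is the paper's: you realize $\mathcal G(v_0,\cdot)$ as the harmonic conjugate of $\phi$, get a unit flow because $\phi$ drops by one from $\beta$ to $\alpha$, read off the increments along $v_0,\ldots,v_m$, identify $\EE[I_i]=\phi(x)$ by Wilson's algorithm and \eqref{pre:voltage}, telescope, and finish with \eqref{pre:count-conductance}. The conclusions are right, but two intermediate claims are false and need repair.

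\textbf{First error: there is no face of $H^\dagger$ around $\beta$.} Write $e^*$ for the dual of $e$, and $x^*$ for the face of the plane dual of $H$ around a vertex $x$. The edge $e$ joins $\alpha$ to $\beta$ and is not a loop. So $e^*$ separates the two distinct faces $\alpha^*$ and $\beta^*$, and deleting it merges them. The duals of the edges of $H_0$ at $\beta$ form a walk from one of $v_0,v_m$ to the other, not a closed cycle.

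Your claimed vanishing is also false. The net current of $\phi$ out of $\beta$ is the effective conductance $\mathcal C(\alpha\leftrightarrow\beta)$ of $H_0$, which is positive. Constancy of $\phi$ on $\beta$ does not make it zero. If $\beta^*$ really were a face of $H^\dagger$, the circulation of $\theta$ around it would equal this conductance, and $\theta$ would not be a gradient.

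The correct justification is shorter. $H^\dagger$ is connected, since $e^*$ is not a bridge. Its faces are the $x^*$ with $x\in D\setminus P$ together with the merged face. So the boundaries of the $x^*$ generate its cycle space, and the node law for $\phi$ on $D\setminus P$ alone gives the cycle law for $\theta$. This is what the paper means by ``by Kirchhoff's node law, $\phi$ has a harmonic conjugate.''

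\textbf{Second error: the divergence of $\theta$ at $v_0$ is not $c=\sum_i\phi(x_i)$.} Up to sign, it is the circulation of the drops of $\phi$ around the face $v_0$ of $H$ with the term of $e$ removed. That is $\pm(\phi(\beta)-\phi(\alpha))=\pm1$, which is exactly the paper's one-line normalization. The number $c$ is instead the drop of the dual potential along $v_0,\ldots,v_m$, that is, $\mathcal G(v_0,v_0)$. Your later dual-cut argument gets the unit normalization right, so delete the sentence about $\theta/c$ rather than keeping two incompatible normalizations.

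\textbf{A smaller point on signs.} When you fix the sign of $\mathcal G(v_0,v_i)-\mathcal G(v_0,v_{i+1})$ from the positivity of the total drop, say explicitly why these increments share a common sign. The reason is that every $e_i$ carries current into $\alpha$, and the $e_i$ are met in a single rotational order around $\alpha$.
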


\begin{proof}
By Kirchhoff's node law, $\phi$ has a harmonic conjugate on the faces of~$H$, which increases by the current through each edge of~$H_0$ from the face on the right of the edge to the face on the left, when the edge is oriented along the current. By Kirchhoff's cycle law around each face of~$H$, the harmonic conjugate is harmonic on~$H^\dagger$ except at~$v_0$ and~$v_m$, where unit current enters and leaves, because~$e$ is not an edge of~$H_0$ and~$\phi$ changes by one along~$e$. So, up to its sign and an additive constant, the harmonic conjugate is~$\mathcal G(v_0,\cdot)$, which proves~\eqref{el:conjugate}.

Applied to~$e_i$, whose faces are~$v_i$ and~$v_{i+1}$, the identity~\eqref{el:conjugate} gives~$|\mathcal G(v_0,v_i)-\mathcal G(v_0,v_{i+1})|=\phi(x)$, where~$x$ is the endpoint of~$e_i$ other than~$\alpha$. These differences have a common sign, since the currents through~$e_0,\ldots,e_{m-1}$ all flow into~$\alpha$, and their sum~$\mathcal G(v_0,v_0)$ is positive, so~$\mathcal G(v_0,v_i)-\mathcal G(v_0,v_{i+1})=\phi(x)$. By~\eqref{pre:wilson} and~\eqref{pre:voltage}, $\phi(x)$ is the probability that~$x$ is joined to~$\beta$, which proves~\eqref{el:mean-edge}.

Summing~\eqref{el:mean-edge} over~$i$ gives~\eqref{el:mean-total}. By~\eqref{pre:voltage} and~\eqref{pre:count-conductance}, the sum of~$\phi(x)$ over the endpoints~$x$ of~$e_0,\ldots,e_{m-1}$ other than~$\alpha$ is the energy of~$\phi$.
\end{proof}

To bound the variance of~$\cL_{H_0}$, we next express the means of the products~$I_iI_j$ through~$\mathcal G$.

\begin{lemma}\label{el:pair-contact}
For every~$0\leq i<j<m$ we have
\begin{align}\label{el:pair-expanded}
 \EE[I_iI_j]
 &=\mathcal G(v_0,v_i)\bigl(\mathcal G(v_{i+1},v_j)-\mathcal G(v_{i+1},v_{j+1})\bigr)
   -\mathcal G(v_0,v_{i+1})\bigl(\mathcal G(v_i,v_j)-\mathcal G(v_i,v_{j+1})\bigr)\notag\\
 &\quad-\mathcal G(v_0,v_j)\bigl(\mathcal G(v_i,v_{j+1})-\mathcal G(v_{i+1},v_{j+1})\bigr)
   +\mathcal G(v_0,v_{j+1})\bigl(\mathcal G(v_i,v_j)-\mathcal G(v_{i+1},v_j)\bigr)\,.
\end{align}
\end{lemma}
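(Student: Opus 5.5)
The plan is to read $\EE[I_iI_j]$ in the dual graph and evaluate it with a determinantal formula for loop-erased random walk; I have not checked that the terms come out with exactly the signs displayed. Write $f_k$ for the dual edge $v_kv_{k+1}$ of~$e_k$. By Section~\ref{ssec:boundary-path}, $I_iI_j$ is the indicator that the path from~$v_0$ to~$v_m$ in the uniform spanning tree of~$H^\dagger$ uses both $f_i$ and~$f_j$. By~\eqref{pre:wilson}, running Wilson's algorithm in~$H^\dagger$ with root~$v_m$ and enumerating~$v_0$ first shows that this path is the loop-erasure of a simple random walk from~$v_0$ stopped at~$v_m$. So it suffices to compute the probability that this loop-erased walk traverses $f_i$ and~$f_j$.

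\emph{Step 1: orientation from planarity.} The vertices $v_0,\ldots,v_m$ lie in this order on the boundary of the disk that contains~$H^\dagger$. A simple path from~$v_0$ to~$v_m$ inside this disk therefore traverses $f_i$ from~$v_i$ to~$v_{i+1}$, then $f_j$ from~$v_j$ to~$v_{j+1}$, and never the reverse. Otherwise a Jordan-curve argument forces the path to cross itself. So only the event that the loop-erasure contains the directed steps $v_i\to v_{i+1}$ and then $v_j\to v_{j+1}$ needs to be computed.

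\emph{Step 2: Fomin's identity.} Cut the loop-erased path at these two steps into three pieces: $v_0\to v_i$, $v_{i+1}\to v_j$, and $v_{j+1}\to v_m$. The probability is then a sum over the two distinguished steps of a random-walk quantity with non-intersection constraints. For boundary vertices in a planar graph, Fomin's identity, in the form used by~\citet{KenyonWilson2009}, turns the non-intersection conditions into a determinant of walk Green functions, because every other pairing of the endpoints forces a crossing. I would first check the one-edge case against~\eqref{el:mean-edge}: there the determinant collapses to $\mathcal G(v_0,v_i)-\mathcal G(v_0,v_{i+1})$. This calibrates the normalization, namely that $\mathcal G$ is grounded at~$v_m$ and that each step contributes a transition factor $1/\deg$ cancelled by the degree in the walk Green function.

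\emph{Step 3: expand into the four terms.} For two edges, the determinant is the alternating sum over the four choices of endpoint in $\{v_i,v_{i+1}\}$ and in $\{v_j,v_{j+1}\}$. Rearranging it as four products of a factor $\mathcal G(v_0,\cdot)$ with a difference $\mathcal G(\cdot,\cdot)-\mathcal G(\cdot,\cdot)$ should give~\eqref{el:pair-expanded}, and the symmetry of~$\mathcal G$ then matches the display term by term.

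\emph{Main obstacle.} The hard step is the sign bookkeeping in Step~2: checking that the planar ordering really excludes every non-identity pairing, so that no correction terms survive, and getting the orientation of each difference right. If Fomin's identity does not apply cleanly, my fallback is the primal route. There I would compute the probability that both endpoints $x_i$ and~$x_j$ are joined to~$\beta$ by Wilson's algorithm from $x_i$ and then~$x_j$. I would then convert the resulting harmonic-measure quantities to dual Green functions with the harmonic-conjugate identity~\eqref{el:conjugate}.
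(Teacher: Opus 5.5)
Your Step~1 matches the paper, but Step~2 has a genuine gap, at exactly the point you flagged. Your three pieces have sources $v_0,v_{i+1},v_{j+1}$ and targets $v_i,v_j,v_m$. Around the boundary cycle these six vertices come in the order $v_m,v_0,v_i,v_{i+1},v_j,v_{j+1}$, that is target, source, target, source, target, source, with each source next to its own target. Fomin's determinant reduces to the identity pairing when sources and targets are nested along the boundary, but here they are not. Of the six permutations, only the one sending $v_0\to v_j$, $v_{i+1}\to v_m$, $v_{j+1}\to v_i$ is forced to cross. The other four non-identity permutations are planar and carry positive weight in general; for instance $v_0\to v_j$, $v_{i+1}\to v_i$, $v_{j+1}\to v_m$ is the paper's pairing $16|25|34$.

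Your calibration would already have shown this. For one edge, take sources $(v_0,v_{i+1})$, targets $(v_i,v_m)$, and walk sums $\deg(v_i)\,\mathcal G(\cdot,v_i)$ and $1$. The determinant divided by $\deg v_i$ is then $\mathcal G(v_0,v_i)-\mathcal G(v_{i+1},v_i)$, not $\mathcal G(v_0,v_i)-\mathcal G(v_0,v_{i+1})$. The discrepancy is the swapped term: a walk $v_0\to v_m$, followed by a walk $v_{i+1}\to v_i$ that avoids its loop-erasure.

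Concretely, take the path $v_0,\dots,v_m$ plus one vertex $w$ joined to $v_0$ and $v_m$. This is a disk graph with $v_0,\dots,v_m$ on the boundary, which is all your argument uses. For $m=3$ and $i=1$, the true probability is $2/5$, while the determinant gives $1/5$. For $m=5$ with $i=1$ and $j=3$, the true value of $\EE[I_iI_j]$ is $2/7$, which agrees with \eqref{el:pair-expanded}. Your single $3\times3$ determinant gives $3/7$ there. Indeed \eqref{el:pair-expanded} is a difference of two $2\times2$ determinants, so it cannot come from expanding one.

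What is missing is a way to cancel the surviving planar pairings. The paper writes $\EE[I_iI_j]=Z(16|23|45)/\lvert\operatorname{SpanT}(H^\dagger)\rvert$ as a grove count. It then applies the Kenyon--Wilson identity for pairings between two triples twice, once for $124|356$ and once for $123|456$. After planarity removes the crossing pairings, these give $Z(16|23|45)-Z(16|25|34)$ and $-Z(16|25|34)$, suitably normalized. Subtracting cancels $Z(16|25|34)$ and leaves the difference of two $2\times2$ determinants in $\mathcal G$ that expands to \eqref{el:pair-expanded}.

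The degenerate cases $i=0$, $j=i+1$ and $j=m-1$ need separate handling, which the paper does by attaching pendant vertices. Your fallback through Wilson's algorithm in $H_0$ does not close the gap either. It leads to the harmonic measure of a random branch, and converting that into Green-function values needs exactly such a determinantal identity.
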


\begin{proof}
The product~$I_iI_j$ is the indicator that the path from~$v_0$ to~$v_m$ in the uniform spanning tree of~$H^\dagger$ uses the duals of both~$e_i$ and~$e_j$. We write its mean as a ratio of numbers of spanning forests of~$H^\dagger$ and compute this ratio with the identities of~\citet*{KenyonWilson2009}.

Assume first that~$v_m,v_0,v_i,v_{i+1},v_j,v_{j+1}$ are distinct, and call them the \textbf{nodes}~1 to~6, in this order, which is their order along the cycle formed by the path~$v_0,\ldots,v_m$ and the dual of~$e$. Since~$H^\dagger$ lies in the disk bounded by this cycle, two disjoint paths in~$H^\dagger$ cannot connect two pairs of vertices that alternate along the cycle~\citep[Section~1.1]{KenyonWilson2011}. Hence a simple path from~$v_0$ to~$v_m$ in~$H^\dagger$ traverses the dual of each~$e_k$ only from~$v_k$ to~$v_{k+1}$ and traverses these duals in increasing order of~$k$.

A \textbf{grove} is a spanning forest of~$H^\dagger$ in which every component contains a node~\citep[Section~1.1]{KenyonWilson2011}. For a partition~$\sigma$ of the nodes into pairs, let~$Z(\sigma)$ be the number of groves with partition~$\sigma$, that is, whose components contain the pairs of~$\sigma$, one pair each. By the order of traversal above, deleting the duals of~$e_i$ and~$e_j$ from a spanning tree of~$H^\dagger$ whose path from~$v_0$ to~$v_m$ uses both of them leaves a grove with partition~$16|23|45$, and adding the two duals to such a grove gives such a tree. Hence
\begin{equation*}
\EE[I_iI_j]=\frac{Z(16|23|45)}{\lvert\operatorname{SpanT}(H^\dagger)\rvert}\,.
\end{equation*}

To compute~$Z(16|23|45)$, we use an identity of~\citet*[Theorems~8.1 and~B.3]{KenyonWilson2009}. List two disjoint triples of nodes in increasing order, the first containing node~1 and the second containing node~6. Let~$\sigma$ run over the partitions of the nodes into pairs that each consist of a node of the first triple and a node of the second, and let~$\operatorname{sgn}(\sigma)$ be the sign of the permutation that~$\sigma$ induces between the two lists. Then
\begin{equation*}
\sum_\sigma\operatorname{sgn}(\sigma)\frac{Z(\sigma)}{\lvert\operatorname{SpanT}(H^\dagger)\rvert}
\end{equation*}
is the coefficient of~$\lambda$ in the polynomial
\begin{equation*}
\det\Bigl(\lambda-\tfrac12\bigl(\mathcal G(x,x)+\mathcal G(y,y)-2\mathcal G(x,y)\bigr)\Bigr)_{x,y}\,,
\end{equation*}
where~$x$ runs over the first triple and~$y$ over the second. Since node~1 is~$v_m$ and~$\mathcal G(v_m,\cdot)=0$, subtracting the row of node~1 from the other rows and the column of node~6 from the other columns shows that this coefficient is the two-by-two determinant with entries~$\mathcal G(x,y)-\mathcal G(x,v_{j+1})$, where~$x$ runs over the first triple without node~1 and~$y$ over the second triple without node~6.

By planarity, $Z(\sigma)=0$ when two pairs of~$\sigma$ alternate along the cycle. So for the triples~$124$ and~$356$ the sum is~$\bigl(Z(16|23|45)-Z(16|25|34)\bigr)/\lvert\operatorname{SpanT}(H^\dagger)\rvert$, and for the triples~$123$ and~$456$ it is~$-Z(16|25|34)/\lvert\operatorname{SpanT}(H^\dagger)\rvert$. Subtracting the second from the first and using that node~2 is~$v_0$ gives
\begin{align*}
\EE[I_iI_j]
&=\det\begin{pmatrix}
\mathcal G(v_0,v_i)-\mathcal G(v_0,v_{j+1})&\mathcal G(v_0,v_j)-\mathcal G(v_0,v_{j+1})\\
\mathcal G(v_{i+1},v_i)-\mathcal G(v_{i+1},v_{j+1})&\mathcal G(v_{i+1},v_j)-\mathcal G(v_{i+1},v_{j+1})
\end{pmatrix}\\
&\quad-\det\begin{pmatrix}
\mathcal G(v_0,v_{i+1})-\mathcal G(v_0,v_{j+1})&\mathcal G(v_0,v_j)-\mathcal G(v_0,v_{j+1})\\
\mathcal G(v_i,v_{i+1})-\mathcal G(v_i,v_{j+1})&\mathcal G(v_i,v_j)-\mathcal G(v_i,v_{j+1})
\end{pmatrix}\,,
\end{align*}
and expanding the determinants gives~\eqref{el:pair-expanded}.

When~$i=0$ or~$j=i+1$ or~$j=m-1$, two of the six vertices coincide. For each such coincidence, we attach to the shared vertex a new vertex of degree one outside the disk and use it as node~3 or~5 or~6, respectively. Off the diagonal, the Green function at the new vertex equals the Green function at the shared vertex, so~\eqref{el:pair-expanded} still holds.
\end{proof}

Summing~\eqref{el:pair-expanded} over the pairs~$i<j$, we now bound the variance of~$\cL_{H_0}$ in terms of the differences~$\mathcal G(v_k,v_k)-\mathcal G(v_0,v_k)$.

\begin{lemma}\label{el:arc-moment}
We have
\begin{equation*}
 \operatorname{Var}(\cL_{H_0})\leq\EE[\cL_{H_0}]+3\sum_{i=0}^{m-1}\EE[I_i]\bigl(\mathcal G(v_i,v_i)-\mathcal G(v_0,v_i)+\mathcal G(v_{i+1},v_{i+1})-\mathcal G(v_0,v_{i+1})\bigr)\,.
\end{equation*}
\end{lemma}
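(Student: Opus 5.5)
The plan is to expand the variance as a sum of pair correlations, sum the identity \eqref{el:pair-expanded} of Lemma~\ref{el:pair-contact} over~$j$ for each fixed~$i$, and telescope. Write~$g_k\coloneqq\mathcal G(v_0,v_k)$, so that~$\EE[I_k]=g_k-g_{k+1}$ by \eqref{el:mean-edge}, $g_m=0$, and~$\sum_{j>i}\EE[I_j]=g_{i+1}$. Since~$I_i^2=I_i$,
\begin{equation*}
\operatorname{Var}(\cL_{H_0})=\EE[\cL_{H_0}]+2\sum_{i=0}^{m-1}\Bigl(\sum_{j=i+1}^{m-1}\EE[I_iI_j]-(g_i-g_{i+1})\,g_{i+1}\Bigr)-\sum_{i=0}^{m-1}(g_i-g_{i+1})^2\,.
\end{equation*}
The last sum is nonnegative, so I drop it. It then suffices to show that, for each~$i$, the bracket is at most~$\tfrac32\EE[I_i]\bigl(\mathcal G(v_i,v_i)-g_i+\mathcal G(v_{i+1},v_{i+1})-g_{i+1}\bigr)$.

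Fix~$i$ and sum the four terms of \eqref{el:pair-expanded} over~$j=i+1,\ldots,m-1$. The first two terms have the form~$g_i\,\Delta_j\mathcal G(v_{i+1},\cdot)$ and~$g_{i+1}\,\Delta_j\mathcal G(v_i,\cdot)$, where~$\Delta_jf\coloneqq f(v_j)-f(v_{j+1})$. They telescope, using~$\mathcal G(\cdot,v_m)=0$, to
\begin{equation*}
g_i\,\mathcal G(v_{i+1},v_{i+1})-g_{i+1}\,\mathcal G(v_i,v_{i+1})\,.
\end{equation*}
The last two terms have the form~$-g_j\,\delta(v_{j+1})+g_{j+1}\,\delta(v_j)$, where~$\delta(y)\coloneqq\mathcal G(v_i,y)-\mathcal G(v_{i+1},y)$. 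Summation by parts in~$j$ turns them into boundary terms plus
\begin{equation*}
\sum_{j}(g_j-g_{j+1})\,\delta(\cdot)=\sum_j\EE[I_j]\,\delta(\cdot)\,.
\end{equation*}
I will carry out this bookkeeping carefully, including the degenerate cases~$j=i+1$ and~$j=m-1$, to obtain a closed form for~$\EE\bigl[I_i\sum_{j>i}I_j\bigr]$.

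From the closed form I subtract~$(g_i-g_{i+1})g_{i+1}$ and rearrange. The aim is to write every remaining term as a product in which one factor is~$g_i-g_{i+1}=\EE[I_i]$, or a difference of the form~$\mathcal G(v_k,v_k)-g_k$ for~$k\in\{i,i+1\}$. For example, $g_i\mathcal G(v_{i+1},v_{i+1})-g_{i+1}g_{i+1}$ splits as
\begin{equation*}
(g_i-g_{i+1})\,\mathcal G(v_{i+1},v_{i+1})+g_{i+1}\bigl(\mathcal G(v_{i+1},v_{i+1})-g_{i+1}\bigr)\,.
\end{equation*}
The tools for bounding the pieces are the standard Green function inequalities~$0\leq\mathcal G(x,y)\leq\min\{\mathcal G(x,x),\mathcal G(y,y)\}$ and the positivity of the resistance metric~$\mathcal G(x,x)+\mathcal G(y,y)-2\mathcal G(x,y)\geq0$. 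I also use that, by \eqref{el:conjugate} applied to~$\mathcal G(y,\cdot)$ and to a harmonic conjugate, the differences~$\delta(y)$ across the single dual edge~$v_iv_{i+1}$ are controlled by the current through~$e_i$, which is~$\EE[I_i]$.

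The main obstacle is this last step. Terms like~$g_{i+1}\bigl(\mathcal G(v_{i+1},v_{i+1})-g_{i+1}\bigr)$ do not obviously carry the small factor~$\EE[I_i]$, and they must cancel against the summation-by-parts terms involving~$\delta$. My first attempt would be to show that~$|\delta(y)|$ is at most~$\EE[I_i]$ times a quantity bounded by~$\mathcal G(v_i,v_i)-g_i+\mathcal G(v_{i+1},v_{i+1})-g_{i+1}$. To do this, I would view~$\delta$ as the voltage of a unit current from~$v_i$ to~$v_{i+1}$ and compare it, by planarity, with the flow from~$v_0$. If exact cancellation fails, I would accept the crude constant~$3$ and absorb the cross terms using the monotonicity of~$g_k$ in~$k$.
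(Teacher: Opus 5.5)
There is a genuine gap at the step ``it suffices to show that, for each~$i$, the bracket is at most~$\tfrac32\EE[I_i](\cdots)$''. This per-index inequality is false, even for lattice domains. Take~$D=\{(0,0),(1,0)\}$, $P=\{(0,0)\}$, and~$e$ the edge from~$(-1,0)$ to~$(0,0)$. Then~$m=5$, the edges~$e_1,e_2,e_3$ go from~$(1,0)$ to~$\alpha$, and~$H^\dagger$ is the path~$v_0,\ldots,v_5$ plus the dual~$v_1v_4$ of the edge from~$(0,0)$ to~$(1,0)$. Hence~$I_0=I_4=1$ and~$I_1=I_2=I_3=B$, where~$B$ is the indicator that~$(1,0)$ is joined to~$\beta$; its mean is~$\tfrac14$.

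For~$i=1$ your bracket is~$\operatorname{Cov}(I_1,I_2+I_3+I_4)=2\operatorname{Var}(B)=\tfrac38$. On the other side, $\mathcal G(v_1,v_1)=\mathcal G(v_0,v_1)=\tfrac74$ because~$v_0v_1$ is a bridge, $\mathcal G(v_2,v_2)=2$, and~$\mathcal G(v_0,v_2)=\tfrac32$. So your target is~$\tfrac32\cdot\tfrac14\cdot\tfrac12=\tfrac3{16}<\tfrac38$.

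The terms you flag as the obstacle, such as~$g_{i+1}(\mathcal G(v_{i+1},v_{i+1})-g_{i+1})$, really do not carry a factor~$\EE[I_i]$ for fixed~$i$. They cancel only against the neighboring index, so no estimate of~$\delta$ can rescue a per-$i$ argument. Also, $\delta(y)$ is the current across~$v_iv_{i+1}$ for a unit flow from~$y$ to~$v_m$, and it equals~$\EE[I_i]$ only for~$y=v_0$. Your variance expansion and your telescoping of the first two terms of~\eqref{el:pair-expanded} over~$j$ are correct.

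The missing idea is to exchange the order of summation for the last two terms of~\eqref{el:pair-expanded}, summing them over~$i<j$ for fixed~$j$. Since~$\sum_{i<j}(\mathcal G(v_i,y)-\mathcal G(v_{i+1},y))=\mathcal G(v_0,y)-\mathcal G(v_j,y)$, they collapse to~$g_j\mathcal G(v_j,v_{j+1})-g_{j+1}\mathcal G(v_j,v_j)$. Together with your telescoped first two terms, this gives
\begin{equation*}
\sum_{i<j}\EE[I_iI_j]=\sum_{i=0}^{m-1}\bigl(g_i\mathcal G(v_{i+1},v_{i+1})-g_{i+1}\mathcal G(v_i,v_i)+\EE[I_i]\,\mathcal G(v_i,v_{i+1})\bigr)\,.
\end{equation*}
Let~$d_k\coloneqq\mathcal G(v_k,v_k)-g_k$, which vanishes for~$k=0$ and~$k=m$. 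The first two summands equal~$g_id_{i+1}-g_{i+1}d_i$. Adding the telescoping sum~$\sum_i(g_id_i-g_{i+1}d_{i+1})=0$ turns their total into~$\sum_i\EE[I_i](d_i+d_{i+1})$; this is the cancellation across indices. Then~$2\mathcal G(v_i,v_{i+1})\leq\mathcal G(v_i,v_i)+\mathcal G(v_{i+1},v_{i+1})$ and~$\sum_i\EE[I_i](g_i+g_{i+1})=g_0^2$ give
\begin{equation*}
\sum_{i<j}\EE[I_iI_j]\leq\tfrac12g_0^2+\tfrac32\sum_i\EE[I_i](d_i+d_{i+1})\,,
\end{equation*}
which yields the lemma.

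This bound exceeds your target for the sum of the brackets by exactly~$\tfrac12\sum_i\EE[I_i]^2$. That excess is what the term~$-\sum_i(g_i-g_{i+1})^2$ you dropped absorbs, so you must keep that term. Alternatively, recover it from Kirchhoff's theorem, $\mathcal G(v_i,v_i)+\mathcal G(v_{i+1},v_{i+1})-2\mathcal G(v_i,v_{i+1})\geq\EE[I_i]$, since the left side is the probability that the dual of~$e_i$ lies in the tree.
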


\begin{proof}
We sum~\eqref{el:pair-expanded} over the pairs~$i<j$. When we sum the first two terms over~$j$ and the last two over~$i$, the Green-function differences telescope because~$\mathcal G(\cdot,v_m)=0$, and with~\eqref{el:mean-edge} this gives the first equality in
\begin{align*}
 \sum_{i<j}\EE[I_iI_j]
 &=\sum_{i=0}^{m-1}\Bigl(\mathcal G(v_0,v_i)\,\mathcal G(v_{i+1},v_{i+1})-\mathcal G(v_0,v_{i+1})\,\mathcal G(v_i,v_i)+\EE[I_i]\,\mathcal G(v_i,v_{i+1})\Bigr)\\
 &=\sum_{i=0}^{m-1}\EE[I_i]\Bigl(\mathcal G(v_i,v_i)-\mathcal G(v_0,v_i)+\mathcal G(v_{i+1},v_{i+1})-\mathcal G(v_0,v_{i+1})+\mathcal G(v_i,v_{i+1})\Bigr)\,.
\end{align*}
The second equality is a summation by parts, since~$\mathcal G(v_k,v_k)-\mathcal G(v_0,v_k)$ vanishes for~$k=0$ and for~$k=m$.

Since~$\mathcal G$ is positive semidefinite, we have~$2\mathcal G(v_i,v_{i+1})\leq\mathcal G(v_i,v_i)+\mathcal G(v_{i+1},v_{i+1})$, and by~\eqref{el:mean-edge} we have~$\sum_i\EE[I_i]\bigl(\mathcal G(v_0,v_i)+\mathcal G(v_0,v_{i+1})\bigr)=\mathcal G(v_0,v_0)^2$. Hence
\begin{equation*}
 \sum_{i<j}\EE[I_iI_j]\leq\tfrac12\mathcal G(v_0,v_0)^2+\tfrac32\sum_{i=0}^{m-1}\EE[I_i]\bigl(\mathcal G(v_i,v_i)-\mathcal G(v_0,v_i)+\mathcal G(v_{i+1},v_{i+1})-\mathcal G(v_0,v_{i+1})\bigr)\,.
\end{equation*}
Since~$I_i^2=I_i$ and~$\EE[\cL_{H_0}]=\mathcal G(v_0,v_0)$ by~\eqref{el:mean-total}, we have
\begin{equation*}
 \operatorname{Var}(\cL_{H_0})=\mathcal G(v_0,v_0)+2\sum_{i<j}\EE[I_iI_j]-\mathcal G(v_0,v_0)^2\,,
\end{equation*}
and the lemma follows.
\end{proof}

\subsection{The square tiling}\label{ssec:electrical-coordinates}

In this section we bound the differences~$\mathcal G(v_i,v_i)-\mathcal G(v_0,v_i)$ and prove Proposition~\ref{el:reference-moment}.

We give each vertex~$x$ of~$H_0$ the horizontal coordinate~$\phi(x)$ and each face~$v$ of~$H$ the vertical coordinate~$-\mathcal G(v_0,v)$. For an edge~$xy$ of~$H_0$, the \textbf{square} of~$xy$ is the product of the interval between~$\phi(x)$ and~$\phi(y)$ and the interval between the vertical coordinates of the two faces of~$H$ on either side of~$xy$. By~\eqref{el:conjugate}, both intervals have length~$|\phi(x)-\phi(y)|$.

Deleting~$e$ from~$H$ merges the faces~$v_0$ and~$v_m$ into one face of~$H_0$, whose boundary contains~$\alpha$ and~$\beta$, and we take this face as the outer face of~$H_0$. Since~$\alpha$ and~$\beta$ then lie on the outer face of~$H_0$, the squares of the edges of positive current form the square tiling of~\citet*{BrooksSmithStoneTutte1940} that~\citet[Theorem~6.1 and its proof]{Lovasz2019} builds from~$H_0$. Figure~\ref{fig:tiling} shows this tiling.

\begin{figure}[htbp]
\centering
\includegraphics{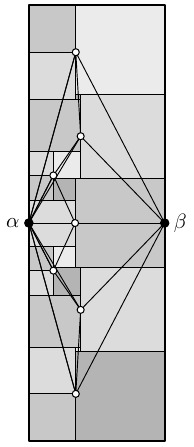}
\caption{The square tiling of the network~$H_0$ of Figure~\ref{fig:network}. Each vertex~$x$ lies on the vertical segment at horizontal coordinate~$\phi(x)$ along which the squares of its edges meet, with~$\alpha$ on the left and~$\beta$ on the right.}
\label{fig:tiling}
\end{figure}

Since~$\phi$ vanishes at~$\alpha$ and the faces on either side of~$e_k$ are~$v_k$ and~$v_{k+1}$, the identity~\eqref{el:mean-edge} shows that the square of~$e_k$ is
\begin{equation*}
[0,\EE[I_k]]\times[-\mathcal G(v_0,v_k),-\mathcal G(v_0,v_{k+1})]\qquad\text{for }0\leq k<m\,.
\end{equation*}
So the squares of the edges at~$\alpha$ are stacked along the vertical axis, with the square of~$e_0$ at the bottom and the square of~$e_{m-1}$ at the top.

For~$0<i<m$, let~$H_0'$ be the network obtained from~$H_0$ by splitting~$\alpha$ into a vertex~$\alpha_0$ incident to~$e_0,\ldots,e_{i-1}$ and a vertex~$\alpha_1$ incident to~$e_i,\ldots,e_{m-1}$, and extend~$\phi$ to~$H_0'$ by zero at~$\alpha_0$ and at~$\alpha_1$. Splitting~$\alpha$ splits the stack of squares at the height~$-\mathcal G(v_0,v_i)$, where the squares of~$e_{i-1}$ and~$e_i$ meet: the squares of the edges at~$\alpha_0$ lie below this height, and those at~$\alpha_1$ lie above it.

\begin{lemma}\label{el:split-estimate}
There is a universal constant~$C<\infty$ such that every~$0<i<m$ and~$j\in\{i-1,i\}$ satisfy
\begin{equation*}
 \mathcal G(v_i,v_i)-\mathcal G(v_0,v_i)\leq C\Bigl(1+\log\frac2{\EE[I_j]}\Bigr)\,.
\end{equation*}
\end{lemma}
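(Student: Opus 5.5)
Write $g_i\coloneqq\mathcal G(v_i,v_i)-\mathcal G(v_0,v_i)$. The plan is to identify $g_i$ as an effective resistance in the split network $H_0'$ and then bound that resistance through the square tiling.

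\emph{Electrical meaning of $g_i$.} Since $\mathcal G$ is grounded at $v_m$, the quantity $\mathcal G(v_i,v_i)+\mathcal G(v_0,v_0)-2\mathcal G(v_0,v_i)$ is the effective resistance $R^\dagger(v_0\leftrightarrow v_i)$ in $H^\dagger$. We have $\mathcal G(v_0,v_0)\geq\mathcal G(v_0,v_i)$. Hence
\begin{equation*}
g_i\leq R^\dagger(v_0\leftrightarrow v_i)+\mathcal G(v_0,v_i)-\mathcal G(v_0,v_0)\leq R^\dagger(v_0\leftrightarrow v_i)\,.
\end{equation*}
I would rather obtain an exact formula, though. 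By planar duality, a unit current from $v_i$ to $v_m$ in $H^\dagger$ is dual to a potential in $H_0'$ between $\alpha_0$ and $\alpha_1$. The cycle around $\alpha$ is cut at $v_i$ and $v_m$, and this cut is exactly what the splitting of $\alpha$ performs. Dual resistances are reciprocal, so I expect
\begin{equation*}
\mathcal G(v_i,v_i)=R^\dagger(v_i\leftrightarrow v_m)=\mathcal C_{H_0'}(\alpha_0\leftrightarrow\alpha_1)\,,
\end{equation*}
possibly with $\beta$ playing a role that I would need to sort out. In the same way, $\mathcal G(v_0,v_i)$ should be the current through the bottom stack of squares at $\alpha_0$, or the corresponding flux in the tiling. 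So $g_i$ becomes the conductance between $\alpha_0$ and $\alpha_1$ in $H_0'$, with the already-known part subtracted.

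\emph{Bound via the tiling.} Use Dirichlet's principle~\eqref{pre:dirichlet} in $H_0'$ with a test function built from the tiling coordinates. Let $t\coloneqq-\mathcal G(v_0,v_i)$ be the split height, where the squares of $e_{i-1}$ and $e_i$ meet. Let $s\coloneqq\EE[I_j]$ be the side of the square of $e_j$ adjacent to the split point $(0,t)$.

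Place each vertex $x$ of $H_0'$ at horizontal coordinate $\phi(x)$, with vertical extent given by its segment in the tiling. Define $f$ as a function of the distance to the split point $(0,t)$:
\begin{itemize}
\item $f$ equals $1$ on $\alpha_0$ and below;
\item $f$ equals $0$ on $\alpha_1$ and above;
\item $f$ interpolates angularly, like $\arg$ around $(0,t)$, but only outside radius $s$.
\end{itemize}
Inside radius $s$ the square of $e_j$ already separates the two stacks, and I would cap $f$ there. An edge whose square has side $a$ at distance $d$ from the split point then contributes energy of order $(a/d)^2$. The squares are non-overlapping in a strip of width one, so summing over dyadic annuli from scale $s$ to scale $1$ gives about $\log(1/s)$, plus a constant for the far region, where the strip has width one and height $\mathcal G(v_0,v_0)$.

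\emph{Main obstacle.} The main obstacle is converting the tiling geometry into an energy bound. The function $f$ is defined on vertices, which are vertical segments of possibly large length, and not on points of the plane. I would assign each vertex the value of the angular function at the point of its segment nearest to the split point, or average along the segment. I would then check that the energy of edge $xy$ is at most a constant times the area of its square divided by $\operatorname{dist}^2$, using that the square touches both segments. Squares near the split point but larger than $s$ need care. There, use that the square of $e_j$ has side $s$, so no square of a different edge intersects the ball of radius $s$ on that side. Any square meeting a dyadic annulus $[2^k s,2^{k+1}s]$ contributes $O(1)$, by area packing, provided its size is at most comparable to $2^ks$. Larger squares cross many annuli, but a gradient bound of $\min(1,a/d)$ still sums to $O(1)$ per annulus. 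This yields $g_i\leq C(1+\log(2/s))$.
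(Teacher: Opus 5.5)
Your tiling estimate (an angular test function around the split point, capped at the scale $\EE[I_j]$, with area packing of squares in a strip of width one giving $\log(2/\EE[I_j])$, and vertex values read off the tiling) is essentially the paper's Steps~2--4. But the point you leave to ``sort out'' is the crux of the lemma, and the argument fails there as written.

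The identity is not $\mathcal G(v_i,v_i)=\mathcal C_{H_0'}(\alpha_0\leftrightarrow\alpha_1)$. The dual of $e$ is deleted from $H^\dagger$, so on the primal side $e$ is contracted, and the correct identity is $\mathcal G(v_i,v_i)=\mathcal C_{H_0'}(\alpha_1\leftrightarrow\{\alpha_0,\beta\})$, with $\beta$ glued to $\alpha_0$.

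The subtracted term $\mathcal G(v_0,v_i)=\sum_{k\geq i}\EE[I_k]$ is the current of $\phi$ into $\alpha_1$, not $\alpha_0$. It can be arbitrarily large compared with $1+\log(2/\EE[I_j])$, for instance when many of $e_i,\ldots,e_{m-1}$ end in $P$ and $\EE[I_j]=1$. So a Dirichlet test function must have energy $\mathcal G(v_0,v_i)+O(1+\log(2/\EE[I_j]))$, with the main term reproduced with coefficient exactly one.

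Your angular $f$ cannot do this. It must take at $\beta$ its value at $\alpha_0$, yet the segment of $\beta$ is the whole right side of the strip. So every edge from $\beta$ to a vertex at or above the height of the split point costs order one. The number of such edges is controlled neither by their total current $\sum(1-\phi(x))$ nor by anything in the lemma.

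For the same reason your preliminary bound $g_i\leq R^\dagger(v_0\leftrightarrow v_i)$ is useless. By the same duality this resistance equals $\mathcal C_{H_0'}(\alpha_0\leftrightarrow\{\alpha_1,\beta\})$, which is at least the number of edges among $e_0,\ldots,e_{i-1}$ that end in $P$.

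The missing idea is to test with $(1-\phi)f$, where $f(\alpha_0)=0$ and $f(\alpha_1)=1$, and to use the exact expansion
\begin{multline*}
\bigl((1-\phi(x))f(x)-(1-\phi(y))f(y)\bigr)^2=(1-\phi(x))(1-\phi(y))\bigl(f(x)-f(y)\bigr)^2\\
+\bigl(\phi(y)-\phi(x)\bigr)\bigl((1-\phi(x))f(x)^2-(1-\phi(y))f(y)^2\bigr)\,.
\end{multline*}
Sum over the edges of $H_0'$. The last terms reduce to the net current into $\alpha_1$, because $\phi$ is harmonic on $D\setminus P$ and $(1-\phi)f$ vanishes at $\alpha_0$ and $\beta$. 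That net current is exactly $\mathcal G(v_0,v_i)$. Hence $\mathcal G(v_i,v_i)-\mathcal G(v_0,v_i)$ is at most the energy of $f$ with weight $(1-\phi(x))(1-\phi(y))$ on each edge $xy$.

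These weights vanish on every edge at $\beta$, so $f(\beta)$ plays no role, and only then can the angular function from the tiling be used as you intended. In that transfer you also need that every vertex of $D\setminus P$ has degree four. The paper reads $f(x)$ off at the lowest face of $x$. Degree four then puts $f(x)$ within the oscillation over one adjacent square of a corner of the square of $xy$, so each square is charged a bounded number of times. Your nearest-point or averaging choice needs a comparable bounded-degree argument.
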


\begin{proof}
The proof has four steps. Step~1 uses planar duality to bound~$\mathcal G(v_i,v_i)-\mathcal G(v_0,v_i)$ by the energy, with weight~$(1-\phi(x))(1-\phi(y))$ on each edge~$xy$, of any function on the vertices of~$H_0'$ that is zero at~$\alpha_0$ and one at~$\alpha_1$. Step~2 chooses a function~$F$ on the plane that is zero on the vertical axis below the square of~$e_j$ and one on the vertical axis above it, and whose oscillation over each square is at most a constant times the side length of the square divided by the larger of~$\EE[I_j]$ and the distance from the square to the square of~$e_j$. Step~3 reads off from~$F$ a function on the vertices of~$H_0'$ whose weighted energy is at most one plus a constant times the sum of the squared oscillations of~$F$ over the squares. Step~4 bounds this sum by a constant times~$1+\log(2/\EE[I_j])$, using that the squares have disjoint interiors and lie in a strip of width one. For a function~$F$ on the plane and a compact set~$Q$, let~$\operatorname{osc}_QF\coloneqq\max_QF-\min_QF$.

\emph{Step 1.} We bound~$\mathcal G(v_i,v_i)-\mathcal G(v_0,v_i)$ by a sum over the edges of~$H_0'$: every function~$f$ on the vertices of~$H_0'$ with~$f(\alpha_0)=0$ and~$f(\alpha_1)=1$ satisfies
\begin{equation}\label{el:weighted}
 \mathcal G(v_i,v_i)-\mathcal G(v_0,v_i)\leq\sum_{xy}(1-\phi(x))(1-\phi(y))(f(x)-f(y))^2\,,
\end{equation}
where the sum runs over the edges of~$H_0'$.

By planar duality, the effective resistance~$\mathcal G(v_i,v_i)$ between~$v_i$ and~$v_m$ in~$H^\dagger$ is an effective conductance in~$H_0'$. Indeed, adding to~$H^\dagger$ an edge from~$v_i$ to~$v_m$ that bounds a face with the path~$v_i,\ldots,v_m$ gives a graph whose plane dual is~$H_0'$ with~$\alpha_0$ and~$\beta$ identified and with an edge from~$\alpha_1$ to the resulting vertex, so~\citet[Proposition~9.4]{LyonsPeres2016} gives
\begin{equation*}
\mathcal G(v_i,v_i)=\mathcal C(\alpha_1\leftrightarrow\{\alpha_0,\beta\})\quad\text{in~$H_0'$}\,.
\end{equation*}

By~\eqref{pre:dirichlet}, this conductance is at most the energy of~$(1-\phi)f$, which is one at~$\alpha_1$ and zero at~$\alpha_0$ and~$\beta$. Every edge~$xy$ satisfies
\begin{multline*}
 \bigl((1-\phi(x))f(x)-(1-\phi(y))f(y)\bigr)^2=(1-\phi(x))(1-\phi(y))(f(x)-f(y))^2\\
 +(\phi(y)-\phi(x))\bigl((1-\phi(x))f(x)^2-(1-\phi(y))f(y)^2\bigr)\,.
\end{multline*}
Summed over the edges, the last terms give the sum over the vertices~$x$ of~$(1-\phi(x))f(x)^2$ times the net current into~$x$. Only~$\alpha_1$ contributes to this sum, because~$\phi$ is harmonic at the vertices of~$D\setminus P$ and~$(1-\phi)f$ vanishes at~$\alpha_0$ and~$\beta$. By~\eqref{el:mean-edge}, the net current into~$\alpha_1$ is~$\sum_{k=i}^{m-1}\EE[I_k]=\mathcal G(v_0,v_i)$. Hence the energy of~$(1-\phi)f$ is~$\mathcal G(v_0,v_i)$ plus the right side of~\eqref{el:weighted}, which proves~\eqref{el:weighted}.

\emph{Step 2.} We place the squares in a strip and choose a function~$F$ on the plane such that every axis-parallel square~$Q$ satisfies
\begin{equation}\label{el:max-angular-osc}
 \operatorname{osc}_QF\leq\frac{\diam_\infty Q}{\max\{1,\min_Q|\cdot|_\infty\}}\,.
\end{equation}
Shift the vertical coordinate and multiply both coordinates by~$2/\EE[I_j]$, so that the square of~$e_j$ becomes~$[0,2]\times[-1,1]$. Then the squares lie in the strip~$[0,2/\EE[I_j]]\times\mathbb R$, the squares of the edges at~$\alpha_0$ other than~$e_j$ lie below height~$-1$, and those of the edges at~$\alpha_1$ other than~$e_j$ lie above height~$1$. For a point~$z=(z_1,z_2)$ of the plane, let
\begin{equation*}
 F(z)\coloneqq\frac12+\frac{z_2}{2\max\{1,|z|_\infty\}}\,.
\end{equation*}
Then~$0\leq F\leq1$, the function~$F(0,\cdot)$ vanishes on~$(-\infty,-1]$ and equals one on~$[1,\infty)$, and~\eqref{el:max-angular-osc} holds because~$|z_2|\leq\max\{1,|z|_\infty\}$ and both~$z_2$ and~$\max\{1,|z|_\infty\}$ are~$1$-Lipschitz in~$z$ for~$|\cdot|_\infty$.

\emph{Step 3.} We choose~$f$ and compare it with~$F$, to show that
\begin{equation}\label{el:transfer}
 \sum_{xy}(1-\phi(x))(1-\phi(y))(f(x)-f(y))^2\leq1+21\sum_Q\bigl(\operatorname{osc}_QF\bigr)^2\,,
\end{equation}
where~$Q$ ranges over the squares of the edges. Let~$f(\alpha_0)\coloneqq0$ and~$f(\beta)\coloneqq0$, and let~$f(\alpha_1)\coloneqq1$. For~$x\in D\setminus P$, let~$f(x)$ be the value of~$F$ at the point with the horizontal coordinate of~$x$ and the smallest vertical coordinate of the faces at~$x$, as in~\citet[Lemma~5.2]{BenjaminiSchramm1996}.

Fix an edge~$xy$ of~$H_0'$ that is neither~$e_j$ nor incident to~$\beta$, and let~$Q$ be its square. For an endpoint~$x\in D\setminus P$ of~$xy$, since~$x$ has four edges, a face at~$x$ of smallest vertical coordinate either lies on one side of~$xy$ or shares another edge at~$x$ with a face on one side of~$xy$. In the first case, and for the endpoints~$\alpha_0$ and~$\alpha_1$, let~$Q_x$ be a corner of~$Q$, so that~$\operatorname{osc}_{Q_x}F=0$. In the second case, let~$Q_x$ be the square of this other edge. Define~$Q_y$ in the same way. Then
\begin{equation*}
 |f(x)-f(y)|\leq\operatorname{osc}_QF+\operatorname{osc}_{Q_x}F+\operatorname{osc}_{Q_y}F\,.
\end{equation*}
Indeed, by the choice of~$Q_x$ and the values of~$F$ on the vertical axis, $f(x)$ differs from the value of~$F$ at a corner of~$Q$ by at most~$\operatorname{osc}_{Q_x}F$, and similarly for~$y$. Squaring this bound and summing over the edges proves~\eqref{el:transfer}, since each square appears as~$Q$ or~$Q_x$ or~$Q_y$ for at most seven edges, the weights in~\eqref{el:transfer} are at most one and vanish on the edges at~$\beta$, and the term of~$e_j$ is at most one.

\emph{Step 4.} We sum the oscillations and deduce the lemma: we show that
\begin{equation}\label{el:osc-sum}
 \sum_Q\bigl(\operatorname{osc}_QF\bigr)^2\leq16\Bigl(1+\log\frac2{\EE[I_j]}\Bigr)\,.
\end{equation}
Let~$Q$ be a square. By~\eqref{el:max-angular-osc} and~$0\leq F\leq1$, every~$z\in Q$ satisfies
\begin{equation*}
 \operatorname{osc}_QF\leq\frac{2\diam_\infty Q}{\max\{1,|z|_\infty\}}\,,
\end{equation*}
and averaging the square of this bound over~$z\in Q$ gives
\begin{equation*}
 \bigl(\operatorname{osc}_QF\bigr)^2\leq4\int_Q\frac{dz}{\max\{1,|z|_\infty\}^2}\,.
\end{equation*}
The squares have disjoint interiors and lie in the strip~$[0,2/\EE[I_j]]\times\mathbb R$, and every~$z_1\geq0$ satisfies
\begin{equation*}
 \int_{-\infty}^\infty\frac{dz_2}{\max\{1,|(z_1,z_2)|_\infty\}^2}=\frac4{\max\{1,z_1\}}\,.
\end{equation*}
Since~$\EE[I_j]\leq1$, summing the bound on~$(\operatorname{osc}_QF)^2$ over the squares and integrating this identity over~$0\leq z_1\leq2/\EE[I_j]$ gives~\eqref{el:osc-sum}. Combining~\eqref{el:weighted}, \eqref{el:transfer}, and~\eqref{el:osc-sum} gives
\begin{equation*}
 \mathcal G(v_i,v_i)-\mathcal G(v_0,v_i)\leq1+21\cdot16\Bigl(1+\log\frac2{\EE[I_j]}\Bigr)\,,
\end{equation*}
which proves the lemma.
\end{proof}

\begin{proof}[Proof of Proposition~\ref{el:reference-moment}]
Lemma~\ref{el:means} shows that~$\EE[\cL_{H_0}]$ is the energy of~$\phi$. Let~$0\leq i<m$. Since~$\mathcal G(v_k,v_k)-\mathcal G(v_0,v_k)$ vanishes for~$k=0$ and for~$k=m$, Lemma~\ref{el:split-estimate}, applied to the pair~$(i,i)$ when~$i>0$ and to the pair~$(i+1,i)$ when~$i+1<m$, gives
\begin{equation*}
 \mathcal G(v_i,v_i)-\mathcal G(v_0,v_i)+\mathcal G(v_{i+1},v_{i+1})-\mathcal G(v_0,v_{i+1})\leq C\Bigl(1+\log\frac2{\EE[I_i]}\Bigr)\,.
\end{equation*}
Multiplying this bound by~$\EE[I_i]$, summing over~$i$, and applying Lemma~\ref{el:arc-moment} gives
\begin{equation*}
 \operatorname{Var}(\cL_{H_0})\leq\EE[\cL_{H_0}]+3C\sum_{i=0}^{m-1}\EE[I_i]\Bigl(1+\log\frac2{\EE[I_i]}\Bigr)\,.
\end{equation*}
Jensen's inequality for the concave logarithm with weights~$\EE[I_i]/\EE[\cL_{H_0}]$ gives
\begin{equation*}
 \sum_{i=0}^{m-1}\EE[I_i]\log\frac1{\EE[I_i]}\leq\EE[\cL_{H_0}]\log\frac m{\EE[\cL_{H_0}]}\,.
\end{equation*}
Since~$\EE[\cL_{H_0}]\leq m$, the last two displays prove~\eqref{el:reference-estimate}.
\end{proof}

\bibliographystyle{plainnat}
\bibliography{references}
\end{document}